\documentclass{amsart}
\usepackage[utf8x]{inputenc}
\usepackage{amsmath, amsthm, amssymb}
\usepackage[usenames,dvipsnames,svgnames,table]{xcolor}
\usepackage[margin=1.3in]{geometry}
\usepackage{enumerate}
\usepackage{dsfont}
\usepackage{hyperref}
\usepackage{cleveref}
\usepackage{cite}
\usepackage{mathtools}
\usepackage[normalem]{ulem}
\usepackage{comment}
\usepackage{xfrac}
\usepackage{autonum}
\usepackage{xcolor}
\usepackage{stmaryrd}
\usepackage{cancel}

\usepackage{tikz}
\usepackage{pgfplots}
\pgfplotsset{compat=1.14}
%I just commented out \pgfplotsset since my tex studio was unable to run. No idea why. (WW) 
\usetikzlibrary{patterns}
%% Make more dense dots
\makeatletter
\pgfdeclarepatternformonly[\Density]{dense dots}
{\pgfpointorigin}
{\pgfpoint{0.5*\Density}{0.5*\Density}}
{\pgfpoint{\Density}{\Density}}%
{
    \pgfsetcolor{\tikz@pattern@color}
    \pgfpathcircle{\pgfpoint{0.25*\Density}{0.25*\Density}}{0.1*\Density}
    \pgfusepath{fill}
}
\makeatother

\Crefname{Assumption}{Assumption}{Assumptions}
\Crefname{Theorem}{Theorem}{Theorems}
\Crefname{Lemma}{Lemma}{Lemmas}
\Crefname{Corollary}{Corollary}{Corollaries}
\Crefname{Proposition}{Proposition}{Propositions}
\Crefname{Theorem}{Theorem}{Theorems}
\Crefname{Conjecture}{Conjecture}{Conjectures}
\Crefname{Remark}{Remark}{Remarks}

\newtheorem{Theorem}{Theorem}[section]
\newtheorem{Proposition}[Theorem]{Proposition}
\newtheorem{Lemma}[Theorem]{Lemma}

\newtheorem{Corollary}[Theorem]{Corollary}

\newcommand{\R}{\mathbb R}

%% Pretty semi-norm brackets
\newcommand{\lbr}{\llbracket}
\newcommand{\rbr}{\rrbracket}

\newcommand{\cI}{\mathcal I}
\newcommand{\cN}{\mathcal N}
\newcommand{\cO}{\mathcal O}
\newcommand{\cP}{\mathcal P}
\newcommand{\cW}{\mathcal W}
\newcommand{\eps}{\varepsilon}

\renewcommand{\H}{\mathbb H^d}

\newcommand{\init}{{\rm in}}

\newcommand{\tand}{\text{ and }}

\DeclareMathOperator{\id}{Id}

\newcommand{\kin}{{\rm kin}}

\renewcommand{\epsilon}{\eps}

\newcommand{\dz}{\, dz}
\newcommand{\dtz}{\, d\tilde z}
\newcommand{\dv}{\,dv}
\newcommand{\dx}{\,dx}
\newcommand{\dt}{\,dt}
\newcommand{\dw}{\,dw}
\newcommand{\dr}{\,dr}
\newcommand{\ds}{\,ds}
\newcommand{\dy}{\,dy}

\DeclareMathOperator{\dist}{dist}

\DeclareMathOperator{\supp}{supp}

\newcommand{\be}{\begin{equation}}
\newcommand{\ee}{\end{equation}}

%Giacomo's lazyness

\def \a {{\alpha}}

\def \R {{\mathbb {R}}}

\def \eps {{\varepsilon}}

\def \phi {{\varphi}}

\def \tilde {\widetilde}

\def \a {{\alpha}}

\renewcommand{\(}{\left(}
\renewcommand{\)}{\right)}

\def \zz {{\rho}}

%%%%%%%%%%%%%%%%
%
%   For triple bar norm symbol
%
%%%%%%%%%%%%%%%%
% Math symbol font matha
\DeclareFontFamily{U}{matha}{\hyphenchar\font45}
\DeclareFontShape{U}{matha}{m}{n}{
      <5> <6> <7> <8> <9> <10> gen * matha
      <10.95> matha10 <12> <14.4> <17.28> <20.74> <24.88> matha12
      }{}
\DeclareSymbolFont{matha}{U}{matha}{m}{n}
\DeclareFontSubstitution{U}{matha}{m}{n}

% Math symbol font mathb
\DeclareFontFamily{U}{mathx}{\hyphenchar\font45}
\DeclareFontShape{U}{mathx}{m}{n}{
      <5> <6> <7> <8> <9> <10>
      <10.95> <12> <14.4> <17.28> <20.74> <24.88>
      mathx10
      }{}
\DeclareSymbolFont{mathx}{U}{mathx}{m}{n}
\DeclareFontSubstitution{U}{mathx}{m}{n}

% Symbol definition
\DeclareMathDelimiter{\VERT}{0}{matha}{"7E}{mathx}{"17}

\numberwithin{equation}{section}
%\numberwithin{theorem}{section}
%\numberwithin{lemma}{section}
%\numberwithin{corollary}{section}
%\numberwithin{proposition}{section}

%\allowdisplaybreaks

%opening

%
%
%
%%% Make subsubsections bold, not italic
%\makeatletter
%\def\subsubsection{\@startsection{subsubsection}{3}%
%  \z@{.5\linespacing\@plus.7\linespacing}{-.5em}%
%  {\normalfont\bfseries}}
%\makeatother

%% The code for the tikz figure containing the domains.
%% It takes up too much space to keep it in the body of
%% the tex file.
\newcommand{\domainsfigure}{
\begin{figure}
%
%\begin{tikzpicture}[rotate=270, xscale=-1]
%  % Draw axes
%  \draw[->] (-1.5,0) -- (3.5,0) node[left] {$v_1$};
%  \draw[->] (0,-0.5) -- (0,3.5) node[right] {$x_1$};
%
%  % Curve
%  \draw[blue] (-1,.5) \foreach \x in {-1, -.95,...,2.5} {
%    -- ({\x},{max(.25*\x^3,.5)})
%  };
%  
%  \draw[blue] (-1,0) -- (-1,.5);
%
%  % Shade region
%  \fill[pattern=north east lines, pattern color=blue!30] (-1,.5) -- plot [domain=-1:2.5,smooth] ({\x},{max(.25*\x^3,.5)}) -- (2.5,3.6765) -- (3.5,3.6765) -- (3.5,0) -- (-1,0) --  cycle;
%  
%  \node[blue] at (2.5,1) {$\cP_R$};
%\end{tikzpicture}
%%
%%
%%
%~
%~
%%
%%
%%
%\begin{tikzpicture}[rotate=270, xscale=-1]
%  % Draw axes
%  \draw[->] (-1.5,0) -- (3.5,0) node[left] {$v_1$};
%  \draw[->] (0,-0.5) -- (0,3.5) node[right] {$x_1$};
%
%  % Curve
%  \draw[blue] (-1,.5) \foreach \x in {-1, -.95,...,2.5} {
%    -- ({\x},{max(.25*\x^3,.5)})
%  };
%  
%  \draw[blue] (-1,0) -- (-1,.5);
%
%  % Shade region
%  \fill[pattern=north east lines, pattern color=blue!30] (-1,.5) -- plot [domain=-1:2.5,smooth] ({\x},{max(.25*\x^3,.5)}) -- (2.5,3.6765) -- (3.5,3.6765) -- (3.5,0) -- (-1,0) --  cycle;
%  
%  \node[blue] at (2.5,1) {$\cP_R$};
%\end{tikzpicture}
%%
%%
%%
%~
%~
%%
%%
%%

\begin{tikzpicture}%[rotate=270, xscale=-1]
    \begin{axis}[
	xlabel=$v_1$,
	ylabel=$x_1$,
	thick,
	domain=-3.5:3.5,
	samples=200,
	width=4in,
	height = 3.25in,
	xticklabels={},
	yticklabels={},
	axis x line = bottom,
	axis y line = center,
	axis on top,
	ymin = 0,
	ymax = 3.5,
	rotate=270,
	xscale=-1,
	xlabel style={at={(1.25,0)},anchor=east}
	]

  %% DRAW REGION \cN_R
	\addplot[
		pattern = dots,
		pattern color=violet!50,
		draw=none
		] {3.5} \closedcycle;
		
	%% DRAW REGION \cI_R
	%% \cO_R DOMAIN
\pgfdeclarepatternformonly{dense horizontal lines}{\pgfpoint{0pt}{0pt}}{\pgfpoint{10pt}{1pt}}{\pgfpoint{10pt}{1pt}}%
        {
            \pgfsetlinewidth{0.5pt}
            \pgfpathmoveto{\pgfpoint{0pt}{1pt}}
            \pgfpathlineto{\pgfpoint{10pt}{1pt}}
            \pgfusepath{stroke}
        }
	\addplot [
		domain=-3.5:-.5,
		samples=200,
		pattern= dense horizontal lines,
		pattern color=black,
		opacity=.5
	] {-.23*x^3} \closedcycle;
		
	%% \cO_R DOMAIN
	\addplot [
		domain=-3.5:.5,
		samples=200,
		fill= white,
		draw=none
	] {(1/7)*ln(exp(-7*(1+x)) + 1)+.4} \closedcycle;
	\addplot [
		domain=-3.5:.5,
		samples=200,
		fill= white,
		fill=red!25,
%		fill opacity=0.3,
		draw=none
	] {(1/7)*ln(exp(-7*(1+x)) + 1)+.4} \closedcycle;
	
	%% \cP_R DOMAIN
	\addplot [
		domain=-.8:-.5,	
		samples=200,
		fill = white,
		draw=none
	] {20 * (x+.8)^(1/2)/(1 + 20*(x+.8)^(1/2))} \closedcycle;
	\addplot [
		domain=-.8:-.5,	
		samples=200,
		pattern = crosshatch,
		pattern color=blue!30,
		draw=none
	] {20 * (x+.8)^(1/2)/(1 + 20*(x+.8)^(1/2))} \closedcycle;
	\addplot [
		domain=-.5:3.5,
		samples=200,
		fill = white,
		draw=none
	] {(1/6)*ln(exp(6*(x-1)) + 1)+.92} \closedcycle;	
	\addplot [
		domain=-.5:3.5,
		samples=200,
		pattern = crosshatch,
		pattern color=blue!30,
		draw=none
	] {(1/6)*ln(exp(6*(x-1)) + 1)+.92} \closedcycle;
%	\addplot [
%	domain=-1:4,
%	samples=200,
%	blue,
%	thick
%	] {ln(exp(x) + 1)};

	\draw[|-|] (.5,.025) -- (.5,.945)
		node[above, midway]{ $O(R^{\sfrac32})$};
	\draw[|-|] (-.78,.25) -- (-.025,.25)
		node[right, midway]{ $O(\sqrt R)$};
	\node[blue,rotate=32.5] at (2.65,2.45){$x_1 = O(Rv_1)$};
	\node[red,rotate=327.5] at (-2.9,2.05){$x_1 = O(R|v_1|)$};
	\node[rotate=352.5] at (-2.2,3){$x_1 = |v_1|^3$};
	
	\node[blue] at (2.5,1) {\huge$\cP_R$};
	\node[violet] at (.5,2.5) {\huge $\cN_R$};
	\node[red] at (-2.5,1) {\huge $\cO_R$};
	\node at (-3,3) {\huge$\cI_R$};
\end{axis}
\end{tikzpicture}

\caption{A cartoon picture of each of the key domains. The Poincar\'e region $\cP_R$ is the blue crosshatched region, the Nash region $\cN_R$ is the violet dotted region, and the outgoing region $\cO_R$ is the red shaded region. The subregion $\cI_R \subset \cN_R$ is the black striped region.  The rough asymptotics of the boundaries separating each region are given as well.}
\label{f.regions}
\end{figure}}

\title[Kinetic Nash and boundary behavior]{A kinetic Nash inequality and precise boundary behavior of the kinetic Fokker-Planck equation}
\author{Christopher Henderson}
\address{Department of Mathematics, University of Arizona, Tucson, AZ 85721}
\email{ckhenderson@math.arizona.edu}
\author{Giacomo Lucertini}
\address{Dipartimento di Matematica, Universit\`a di Bologna, Bologna, Italy, 40126}
\email{giacomo.lucertini3@unibo.it}
\author{Weinan Wang}
\address{Department of Mathematics, University of Oklahoma, Norman, OK, 73019}
\email{ww@ou.edu}

\begin{document}

\begin{abstract}
In this paper, we prove a kinetic Nash type inequality and adapt it to a new functional inequality for functions in a kinetic Sobolev space with absorbing boundary conditions on the half-space.  As an application, we address the boundary behavior of the kinetic Fokker-Planck equations in the half-space.  Our main result is the sharp regularity of the solution at the absorbing boundary and grazing set.
\end{abstract}

\maketitle

\section{Introduction}

\subsection{The equation}
We study the homogeneous kinetic Fokker-Planck equation in the half-space with absorbing boundary conditions:
\be\label{e.kfp}
	\begin{dcases}
		(\partial_t + v\cdot\nabla_x) f = \Delta_v f
			\qquad&\text{ in } \R_+ \times \H \times \R^d,\\
		f(t,x,v) = 0
			\qquad&\text{ on } \R_+ \times \gamma_-,\\
		f(0,\cdot,\cdot) = f_\init
			\qquad&\text{ in } \H \times \R^d,
	\end{dcases}
\ee
where we let $\R_+ = (0,\infty)$, $\R_- = (-\infty,0)$,
\be
	\H = \left\{(x_1,\dots, x_d) \in \R^d : x_1 > 0\right\},
	\quad\text{ and }\quad
	\gamma_{\pm} = \{(x,v) : x_1 = 0, \mp v_1 > 0\}.
\ee
We assume that $f_{\rm in}$ is a nonnegative, measurable function that is an element of a certain weighted $L^1$-space. 
We refer to $\gamma_-$ as the {\em incoming} portion of the boundary and $\gamma_+$ as the {\em outgoing} portion of the boundary.  The sign convention may appear strange above, but we follow the standard notation in the general case: the minus sign corresponds to the negativity of $v\cdot\eta_x$, where $\eta_x$ is the outward pointing unit normal on the physical space boundary.  In our case $\eta_x = (-1,0,\cdots, 0)$.    The set where $x \cdot \eta_x = 0$ is called the ``grazing set.''  In our case this is when $x_1 = 0 = v_1$.

\subsection{Informal discussion of the main results}

Our goal is to understand the precise boundary behavior of~\eqref{e.kfp}.  In particular, we are interested in the sharp regularity on $\gamma_-$.  We note that the interior regularity is quite well-understood; see~\cite{Kolmogorov} for the homogeneous equation and~\cite{HendersonSnelson, imbert2021nonlinear, golse2016, manfredini1997ultraparabolic, HendersonWang, dong2022global, DY_Lp, bramanti2007schauder, difrancesco2006schauder, AbedinTralli, PLP, BiagiBramanti, anceschi2024poincar, AnceschiRebucci, loher2023quantitative, GuerandMouhot,LanconelliPolidoro, anceschi2023fundamental} for more recent results with varying degrees of inhomogeneity.  More generally, we refer to the review~\cite{AnceschiPolidoroReview}.  Let us note that the literature is quite large, so the above is unfortunately only a small sample of related works.  Briefly, though, the major source of difficulty for~\eqref{e.kfp} is the lack of diffusion in $x$.  Instead, one must use ``hypoellipticity'' to import the $v$-regularity (generated by the $\Delta_v$ term on the right hand side) to $(t,x)$-regularity via the transport term $\partial_t + v\cdot\nabla_x$.

To illustrate the boundary regularity, let us briefly introduce a (nontrivial) steady solution to~\eqref{e.kfp}.  As it is convenient to introduce a steady solution to the adjoint problem at the same time, we do so here. These solutions are
\be\label{e.phi}
\begin{dcases}
	v\cdot\nabla_x \phi = \Delta_v \phi
	\quad &\text{ in } \H\times \R^d,\\
	\phi = 0
	\quad &\text{ on } \gamma_-,
\end{dcases}
\quad\tand\quad
\begin{dcases}
	-v\cdot\nabla_x \tilde \phi = \Delta_v \tilde \phi
	\quad &\text{ in } \H\times \R^d,\\
	\tilde \phi = 0
	\quad &\text{ on } \gamma_+.
\end{dcases}
\ee
It is easy to see that,  with an abuse of notation,
\be
	\phi(x,v)
		= \phi(x_1,v_1)
		= \tilde \phi(x_1,-v_1).
\ee
Following \cite[Lemma~2.1]{GroeneboomEtAl}, % and~\cite[Section 7.10.1]{Olver}, 
we have the asymptotics of $\phi$, and, thus, also $\tilde\phi$, given by
\be\label{e.phi_asymp}
	\phi(x,v)
		\approx
			\begin{dcases}
				\tfrac{x_1}{v_1^{\sfrac52}} \exp\left\{- \tfrac{v_1^3}{9x_1}\right\}
					\qquad &\text{ if } 0 \leq x_1 \leq v_1^3,
				\\
				x_1^{\sfrac16}
					\qquad &\text{ if } x_1 \geq |v_1|^3,
				\\
				\sqrt{|v_1|}
					\qquad &\text{ if } 0 \leq x_1 \leq - v_1^3.
			\end{dcases}
\ee
Given this, it is natural to expect that the behavior $f$ is, roughly, exponentially small as $x_1 \to 0$ with $v_1>0$ and $C^{\sfrac16}_xC^{\sfrac12}_v$ as $(x_1,v_1) \to (0,0)$.  This aligns with what is well-understood about kinetic equations: the bottleneck to regularity occurs at the ``grazing set.''

Our goal is to make this precise by both identifying {\em exactly} the behavior conjectured in the previous paragraph and understanding the norms that control $f$ near the boundary.  Our approach is to develop a kinetic boundary Nash inequality that allows for an $L^1_w\to L^2$ estimate, where ``w'' stands for ``weighted.''  By using adjointness, we get then an $L^2\to L^\infty_w$ estimate.  In analogy with the heat equation, one expects
\be\label{e.c042301}
	f(t,x,v)
		\lesssim \frac{\|\tilde \phi f_{\rm in}\|_{L^1}}{t^{2d+\sfrac12}} \phi(x,v),
\ee
where the power of $t$ follows by scaling arguments and the $\tilde \phi$ appears because $\|\tilde \phi f(t)\|_{L^1}$ is a conserved quantity.  To dwell on the last point a moment longer, observe that
\be\label{e.tilde_phi_conserved}
	\begin{split}
		\frac{d}{dt} \int_{\H\times \R^d}  f(t,x,v) \tilde \phi(x,v) \dx\dv
			&= \int_{\H\times \R^d} \left[\left( \Delta_v - v\cdot\nabla_x\right) f\right] \tilde \phi \dx\dv
			\\&
			= \int_{\H\times \R^d}  f \left( \Delta_v + v\cdot\nabla_x\right) \tilde \phi \dx\dv
			= 0.
	\end{split}
\ee
This approach to~\eqref{e.c042301} is outlined in greater detail in \Cref{s.strategy}.  Using standard interior estimates along with~\eqref{e.c042301}, one can easily show that $f$ is $C^{\sfrac12}_{\rm kin} \approx C^{\sfrac13}_tC^{\sfrac16}_xC^{\sfrac12}_v$ up to the boundary and smooth in the interior.

Actually,~\eqref{e.c042301} does {\em not} hold!  Roughly, if $f_{\rm in}$ is supported where $\tilde \phi$ is exponentially small, that is, $v_1 \ll -1$ and $-v_1 \ll x_1 \ll -v_1^3$, the right hand side of~\eqref{e.c042301} will be exponentially small.  On the other hand, $f(1,x,v)$ will remain constant order in $v_1 + \supp(f_{\rm in})$; that is, the set obtained by applying transport to the support of $\supp(f_{\rm in})$. % as it is only acted on by transport in $x$ and diffusion in $v$.  
%Hence, the left hand side of~\eqref{e.c042301} remains constant order.  
This behavior is clearly not consistent with~\eqref{e.c042301}. 
Roughly, this is related to the fact that $\tilde \phi$ ``feels'' infinite time scales while $f$ only ``feels'' the time interval $[0,t]$.  This is important here (and not for the heat equation) because transport does not (locally in $v$) have infinite speed of propagation while diffusion does. 
Regardless,~\eqref{e.c042301} is a good indication of our main result \Cref{t.main} and how the proof proceeds.

In the process of proving our boundary Nash inequality, we develop a whole-space Nash inequality (\Cref{t.ws_Nash}).  This easily yields the sharp time decay estimate
\be\label{e.c061910}
	f(t,x,v)
		\lesssim \frac{\|f_{\rm in}\|_{L^1}}{t^{2d}},
\ee
for solutions of~\eqref{e.kfp} posed on $\R_+ \times \R^d \times \R^d$.  Actually, one can easily include a uniformly elliptic (rough) diffusion matrix in~\eqref{e.kfp} in the arguments deriving~\eqref{e.c061910}.  This is the content of \Cref{c.ws_time_decay}.  It is interesting to note that estimates of this form, suitably weighted, have been used in the parabolic setting to obtain Harnack inequalities and regularity in the classic work of Fabes and Stroock~\cite{FabesStroock}.  It is possible that this could provide a new method to understand estimates of the fundamental solution.  See~\cite{LanconelliPascucci} for a related approach based on a kinetic Sobolev inequality and Moser's iteration.

%
%We explain this in greater detail in the sequel.  Along the way, we establish a general Nash-type inequality relating the $L^1$, $L^2$, and $H^1_{\rm kin}$-norms of a function and, as it comes `for free' from this Nash inequality, we deduce the whole space decay estimate
%\be
%	f(t,x,v)
%		\lesssim \frac{\|f_{\rm in}\|_{L^1}}{t^{2d}},
%\ee
%for solutions of~\eqref{e.kfp} posed on $\R_+ \times \R^d \times \R^d$.
%

\subsection{Precise statements of main results: boundary behavior on the half-space}

\begin{Theorem}\label{t.main}
	Suppose that $f$ solves~\eqref{e.kfp}.  There is a constant $\alpha > 0$ and a nonnegative smooth function $\mu$ bounded by 1, satisfying
	\be
		\mu(t, x,v)
			=\mu(t,x_1,v_1)
			\approx \begin{cases}
					0
						\qquad &\text{ if } v_1 <  \alpha \sqrt t
							\text{ or } x_1 \geq \frac{|v_1|^3}{\alpha},
					\\
					1
						\qquad &\text{ if } \alpha t v_1 \leq x_1 \leq v_1^3 \a,
					\\
					e^{-\frac{\alpha t v_1}{x_1}}
						\qquad &\text{ if } v_1 \geq \alpha \sqrt t
							\text{ and } x_1 \leq  \alpha t v_1,
				\end{cases}
	\ee
	such that $f$ may be decomposed as
	\be
		f(t,x,v) = \phi(x,v) h_1(t,x,v) + t^{\sfrac14} \mu(x,v) h_2(x,v),
	\ee
	where, for $i=1,2$,
	\be\label{e.c061908}
		\|h_i(t,\cdot,\cdot)\|_{L^\infty(\H\times \R^d)}
			\lesssim \frac{1}{t^{2d+\sfrac12}} \Big( \int f_\init \tilde \phi \dx\dv
					+ t^{\sfrac14} \int f_\init \tilde \mu \dx\dv\Big),
	\ee
	where $\tilde \mu (t,x_1,v_1) = \mu(t,x_1,-v_1)$ (see \Cref{s.notation}).
%	\be\label{e.c061908}
%		\inf_h \Big( \Big\|\frac{f-h}{\phi}\Big\|_{L^\infty(\H\times \R^d)}
%			+ \Big\|\frac{h}{t^{\sfrac14} \mu}\Big\|_{L^\infty(\H\times \R^d)}\Big)
%				\lesssim \frac{1}{t^{2d+\sfrac12}} \Big( \int f_\init \tilde \phi \dx\dv
%					+ t^{\sfrac14} \int f_\init \tilde \mu \dx\dv\Big).
%	\ee	
\end{Theorem}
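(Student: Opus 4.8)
The plan is to reduce \Cref{t.main} to a single pointwise envelope estimate and then split $f$ with a partition of unity. Concretely, I would first establish
\begin{equation*}
  f(t,x,v)\ \les\ \frac{\phi(x,v)+t^{\sfrac14}\mu(t,x,v)}{t^{2d+\sfrac12}}\Big(\int f_\init\,\tilde\phi\dx\dv + t^{\sfrac14}\int f_\init\,\tilde\mu\dx\dv\Big).
\end{equation*}
Granting this, choose a smooth $\chi=\chi(t,x,v)\in[0,1]$ with $\chi\equiv1$ on $\{\phi\ge t^{\sfrac14}\mu\}$ and $\chi\equiv0$ on $\{2\phi\le t^{\sfrac14}\mu\}$, and set $h_1:=\chi f/\phi$ and $h_2:=(1-\chi)f/(t^{\sfrac14}\mu)$. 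On $\supp\chi$ the ratio $(\phi+t^{\sfrac14}\mu)/\phi$ is $O(1)$, and on $\supp(1-\chi)$ the ratio $(\phi+t^{\sfrac14}\mu)/(t^{\sfrac14}\mu)$ is $O(1)$, so the envelope bound becomes exactly \eqref{e.c061908}, while $f=\phi h_1+t^{\sfrac14}\mu h_2$ holds by construction.

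The core is to build $\mu$ and prove two facts about it. I would define $\mu(t,x_1,v_1)$ by the formula in the statement, smoothed across the three interfaces $v_1\sim\alpha\sqrt t$, $x_1\sim\alpha t v_1$, $x_1\sim|v_1|^3/\alpha$ at the natural kinetic scales, and then check: \emph{(a)} $\tilde\phi+t^{\sfrac14}\tilde\mu$ is an approximate supersolution of the adjoint evolution $\partial_t-\Delta_v-v\cdot\nabla_x$, with a remainder that is nonnegative on $\gamma_+$ and, in the interior, integrable against $f$ with a $t$-independent constant, so that $\int f(t)(\tilde\phi+t^{\sfrac14}\tilde\mu)\dx\dv$ stays bounded by its value at $t=0$; this is the quantitative replacement of the exact conservation \eqref{e.tilde_phi_conserved}, and it is precisely what repairs the failure of \eqref{e.c042301}. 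And \emph{(b)} $\phi+t^{\sfrac14}\mu$ is, up to a universal constant, a supersolution of \eqref{e.kfp}, hence an admissible pointwise envelope. Both are verified by direct computation from the asymptotics \eqref{e.phi_asymp} and the ODEs behind them; the conceptual content is that $\mu$ is a smoothed, mass-weighted indicator of the set reachable from the incoming boundary by the free transport in time $t$, and the thresholds $\alpha\sqrt t$ and $x_1\sim\alpha t v_1$ encode the competition between the $O(\sqrt t)$ diffusive spreading in $v$ and the finite propagation speed in $x$ — the effect that is absent for the heat equation and that forces the correction term.

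For the decay itself I would run a Nash argument on the ground-state transform $u:=f/\phi$. Using $v\cdot\nabla_x\phi=\Delta_v\phi$ one gets $(\partial_t+v\cdot\nabla_x)u=\phi^{-2}\div_v(\phi^2\nabla_v u)$; then, using also $v\cdot\nabla_x\tilde\phi=-\Delta_v\tilde\phi$ and the absorbing condition (which annihilates the only dangerous boundary term on $\gamma_-$, the remaining contribution on $\{x_1=0,\ v_1<0\}$ carrying the favorable sign), one obtains
\begin{equation*}
  \frac{d}{dt}\int u^2\,\phi\tilde\phi\dx\dv\ \le\ -2\int\phi\tilde\phi\,|\nabla_v u|^2\dx\dv,
  \qquad \int |u|\,\phi\tilde\phi\dx\dv=\int f\,\tilde\phi\dx\dv\ \ \text{(conserved).}
\end{equation*}
Feeding this dissipation/conservation pair into the boundary form of the kinetic Nash inequality of \Cref{t.ws_Nash} (set up in \Cref{s.strategy} in the kinetic Sobolev space with absorbing conditions, where the degeneration of $\phi\tilde\phi$ at $x_1=0$ together with the $\sqrt t$-thick boundary layer raises the effective dimension to $4d+1$) gives an $L^1_w\to L^2_w$ decay on $[0,t/2]$; since the weighted $v$-Dirichlet form is symmetric and transport antisymmetric in the $\phi\tilde\phi$-pairing, duality upgrades this to an $L^2_w\to L^\infty_w$ estimate on $[t/2,t]$, and composing across $t/2$ gives $f(t)\les t^{-(2d+\sfrac12)}\phi\int f_\init\tilde\phi$, except that the transport of the $\tilde\phi$-invisible part of $f_\init$ enters as a source term which, via step \emph{(a)}, is dominated by the $t^{\sfrac14}\mu$-piece; adding the two contributions yields the envelope estimate.

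I expect the main obstacle to be steps \emph{(a)}--\emph{(b)}: the construction of $\mu$ and the verification of its (super)solution properties, especially the choice of smoothing scales at the three interfaces so that neither $(\partial_t-\Delta_v-v\cdot\nabla_x)(t^{\sfrac14}\tilde\mu)$ nor $(\partial_t+v\cdot\nabla_x-\Delta_v)(t^{\sfrac14}\mu)$ acquires the wrong sign — the transport and cross terms near $x_1\sim t v_1$ and near the grazing set $x_1=v_1=0$ being the delicate spots — and the control of the boundary contributions of $\mu$ on $\gamma_\pm$. A secondary difficulty is to match the abstract weight produced by the Nash inequality to the geometric envelope $\phi+t^{\sfrac14}\mu$ rather than to some generic weight, so that the Nash composition closes with exactly the scaling exponent $2d+\sfrac12$.
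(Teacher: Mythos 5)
Your plan diverges genuinely from the paper's: you propose a ground-state transform $u=f/\phi$ together with a $\phi\tilde\phi$-weighted Nash inequality, whereas the paper keeps $f$ itself, decomposes $\H\times\R^d$ into regions $\cP_R,\cN_R,\cO_R$ (Poincar\'e, unweighted Nash, outgoing Poincar\'e), folds $\tilde\phi$ in only via the pointwise lower bound on $\cN_R\cap\{x_1\ge -v_1^3\}$, and handles the isolated region by a separate $L^1$-estimate (Lemma~\ref{l.isolated_estimate}); the decomposition $f=\phi h_1+t^{\sfrac14}\mu h_2$ then emerges as the identification of the dual space $\tilde X_t^*$, not from a pointwise supersolution comparison. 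The energy identity you derive for $u$ is correct (in fact exact: the remainder vanishes and the boundary fluxes die because either $\phi$ or $\tilde\phi$ vanishes on each of $\gamma_\pm$), and your reduction of the theorem to a pointwise envelope bound via a partition of unity is valid and equivalent to the paper's duality step. But the proposal has a fatal gap at its core.

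The weighted Nash inequality you invoke, $\int u^2\phi\tilde\phi\lesssim(\int\phi\tilde\phi|\nabla_v u|^2)^\theta(\int|u|\,\phi\tilde\phi)^{2(1-\theta)}$, with any polynomial exponent, is \emph{false}, and for precisely the reason the paper pivots to the $\mu$-correction: on the isolated region $\cI_R=\{v_1<0,\ x_1\le -v_1^3\}\cap\cN_R$ the weight $\phi\tilde\phi$ is exponentially degenerate while the logarithmic $v$-gradient $|\nabla_v\log(\phi\tilde\phi)|\approx v_1^2/x_1$ is only polynomially large; a test function $u\approx\chi_{\cI_R}/\sqrt{\phi\tilde\phi}$ then makes the left side $O(1)$, the dissipation term polynomially bounded, and the $L^1$ term exponentially small, so no power can balance. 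Your step~(a) (approximate adjoint supersolution $\tilde\phi+t^{\sfrac14}\tilde\mu$) gives a bounded conserved quantity, but you offer no mechanism to inject that $L^1_{\tilde\mu}$ control back into the $\phi\tilde\phi$-weighted $L^2$ energy inequality, which is exactly what the paper's regional splitting plus \Cref{l.isolated_estimate} accomplishes. Step~(b) (supersolution property of $\phi+t^{\sfrac14}\mu$ for the forward equation) is likewise unverified and not used in the paper; since $\mu$ is time-dependent and $\partial_t\mu<0$ on the exponential regime, there is no reason to expect it, and the paper sidesteps this entirely by obtaining the envelope from $\|\tilde S_t^*\|_{L^2\to\tilde X_t^*}$ rather than a comparison principle. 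In short, the ground-state Nash cannot close without the isolated-region mechanism, and the mechanism you sketch does not attach to the weighted energy identity you derived.
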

\Cref{t.main} is quite a bit to digest, so let us discuss it briefly.  First, $\mu$ is defined in \Cref{l.isolated_estimate} (note: $\tilde\mu(t,x,v) = \mu(t,x,-v)$ in \Cref{l.isolated_estimate}). 

Second, let us consider the simple case where $f_\init$ is compactly supported. Then, for $t$ sufficiently large, the right hand side of~\eqref{e.c061908} reduces to
\be
	\frac{1}{t^{2d+\sfrac12}} \int f_\init \tilde \phi \dx\dv.
\ee
Let us also only consider here the case $v_1 \leq \alpha \sqrt t$.

In this case, we see the following behavior near the ``grazing set'' $x_1 = v_1 = 0$: if $0 < x_1, |v_1| \ll 1$,
\be
	f(t,x,v)
		= \phi(x,v) h_1(x,v)
		\lesssim \frac{\phi(x,v)}{t^{2d+\sfrac12}}.
\ee
Using~\eqref{e.phi_asymp}, we see precisely the $C^{\sfrac16}_x C^{\sfrac12}_v$-regularity at $(t,0,0)$.

Next, consider the behavior near $\gamma_-$: fix any $v_1 \in (0, \alpha \sqrt t)$ and take $0 < x_1 \ll 1$.  Similarly to the above, we find
\be
	f(t,x,v)
		\lesssim \frac{\phi(x,v)}{t^{2d+\sfrac12}}
		\approx \frac{x_1}{ v_1^{\sfrac52} t^{2d+\sfrac12}} e^{-\frac{v_1^3}{9x_1}}.
\ee
In other words, we recover a precise form of the super-polynomial decay observed by Silvestre in~\cite{SilvestreBoundary}.  It should be noted that Silvestre considers a much more irregular model than~\eqref{e.kfp}.

The case when $v_1 \geq \alpha \sqrt t$ is essentially the same, although with the addition of an exponentially decaying (in $\sfrac{v_1}{x_1}$) term due to $\mu$.  Thus, just as in the previous case, we see ``fast'' decay in $\sfrac{v_1}{x_1}$.

As we mentioned above, the bottleneck to regularity up to the boundary is precisely in understanding the decay of $f$ as $x_1 \to 0$.  As such, it is straightforward to use interior regularity estimates, suitably scaled, to deduce that
\be
	f \in C^{\sfrac12}_\kin \approx C^{\sfrac12}_t C^{\sfrac16}_x C^{\sfrac12}_v
\ee
from \Cref{t.main}; see~\cite{HJV,HJJ} for one approach to this.  We omit the details.  Since it is not the main focus of this work, we also do not clarify precisely the spaces $C^\alpha_\kin$ beyond the rough statement above.

Finally, let us discuss the meaning and necessity of the $\mu$ and $\tilde \mu$ terms.  As referenced in the discussion of~\eqref{e.c042301}, they arise due to the ``isolated'' region
\be\label{e.c062008}
	I_t = \{ (x,v) : v_1 \leq - O(\sqrt t), O(t) v_1 \leq x_1 \leq v_1^2\}.
\ee
This set contains particles that are too far from the outgoing boundary $\gamma_+$ to travel there by transport in time $t$ and are too far from the incoming boundary $\gamma_-$ to have made it there following transport for time $t$ and then making ``jump'' in velocity of size $O(\sqrt t)$.  The ``allowed'' jump size is determined by scaling, although it comes up in more concrete ways in our arguments.

Given this isolation, one expects the $L^1$-norm of $f$ on $I_t$ to be roughly constant for times $[0,t]$.  From a microscopic point of view, this says that the density of particles in $I_t$ is roughly constant.  Intuitively, particles can leave $I_t$ in two ways.  First, a particle can make a velocity jump, leaving $I_t$ through the top.  Here $\tilde \phi$ is ``large'' and we can control this quantity with a term of the form
\be
	\frac{1}{t^{\sfrac14}} \int f \tilde \phi \dx \dv
		= \frac{1}{t^{\sfrac14}} \int f_\init \tilde \phi \dx \dv
\ee
(recall~\eqref{e.tilde_phi_conserved}).  Let us note that the time scaling is not obvious at this point.  Second, a particle can follow transport and leave $I_t$ through the {\em left} (because $v_1 < 0$).  This is accounted for by the exponential part of $\tilde\mu$, which is the appropriate density for these dynamics.  Indeed,
\be
	(\partial_t - v\cdot\nabla_x - \Delta_v) e^{\frac{\alpha t v_1}{x_1}}
		\leq 0
\ee
for $x_1 \leq - O(t) v_1$ and $v_1 \leq - O(\sqrt t)$.

\subsubsection{Previous results}

The closest works to ours are those of Hwang, Jang, and Vel\'azquez~\cite{HJV} and Hwang, Jang, and Jung~\cite{HJJ}; see also~\cite{HwangKim}.  In these remarkable works, the authors prove many results, the most relevant to the current work being the $C^{\sfrac\alpha3}_xC^\alpha_v$-regularity of $f$ for any $\alpha < \sfrac12$ given $f_\init \in L^1\cap L^\infty$.  They prove that the decay rate at the boundary controls the regularity.  To understand the decay rate, they construct highly nontrivial supersolutions by a clever change-of-variables and a careful patching of special functions.  
Our approach is quite different than their comparison principle based one, and one advantage is that we are able to identify the precise regularity, time decay, and controlling quantities (the $L^1_{\tilde \phi}$ and $L^1_{\tilde \mu}$-norms of $f_\init$) of the boundary behavior.  %On the other hand, by performing a local boundary flattening, they are able to handle general $C^3$ domains.

A more general approach is given by the De Giorgi methods of Silvestre~\cite{SilvestreBoundary} and Zhu~\cite{zhu2022regularity}.  Allowing rough coefficients in~\eqref{e.kfp}, these works obtain $C^\alpha_\kin$ estimates of $f$, where $\alpha$ depends on the bounds of the coefficients.  Silvestre also observed that, as $(x,v) \to (0,v_+)$ with $v_+ > 0$, $f(t,x,v) \lesssim x^p$ for any $p>0$.  As discussed above, we obtain a precise version of this.  We also mention the recent preprint~\cite{hou2024boundedness}.

Let us finally note that hypocoercivity is another approach to overcoming the lack of diffusion in $x$ for kinetic equations.  We point out Villani's classic memoir~\cite{hypocoercivity} for a discussion of this topic; however, this area remains quite active.  See, for example,~\cite{carrapatoso2024kinetic, BDMMS, BouinDolbeaultLafleche}.  That approach is quite different from our own.

\subsubsection{Generalizations}

It is clear that, for a general convex domain $\Omega_x$, our results immediately give, via the comparison principle, the upper bound in \Cref{t.main} when~\eqref{e.kfp} is posed on $\R_+ \times \Omega_x \times \R^d_v$.  One need only rotate and translate $\{0\}\times \R^{d-1}$ to be a supporting hyperplane of $\partial \Omega_x$.

A more interesting question is how to generalize the results to the case of a general nonconvex domain $\Omega$ or the case with nonconstant coefficients
\be\label{e.general}
	(\partial_t + v\cdot\nabla_x ) f = \nabla_v( a \nabla_v f) + \text{(lower order terms)}.
\ee
Let us focus on the latter as the former is, in some sense, a subcase of the after applying a suitable boundary flattening change of coordinates.

If $a \equiv \id$, then our results above are immediately applicable to obtain $x_1^{\sfrac16}$ and $v_1^{\sfrac12}$ decay near $x_1 = 0 =v_1$.  The only difference is that the lower order terms may cause norm growth, so that the $t^{2d+1}$ term in the numerator of \Cref{t.main} may be changed.

When $a \neq \id$ and $a$ is sufficiently smooth, a change of variables and a rescaling takes $a$ to the identity plus a small perturbation, locally.  This is a typical technique in the proof of Schauder estimates (see, e.g.,~\cite[Section~2.2]{HendersonWang}).  In principle, one should be able to use this to recover the $x_1^{\sfrac16}$ and $v_1^{\sfrac12}$ decay estimates in \Cref{t.main}.  %This is the subcritical case.

When $a$ is ``rough,'' one does not expect the $x_1^{\sfrac16}$ and $v_1^{\sfrac12}$ decay to hold by analogy with (divergence form) elliptic equations.  In this case, the results of Silvestre~\cite{SilvestreBoundary} and Zhu~\cite{zhu2022regularity} are likely the best one can hope for: $C^\alpha_\kin$-regularity up to the boundary with $\alpha$ depending on the ellipticity bounds of $a$.  %This is the critical case.

Let us point out that an advantage to our approach is the boundary behavior of {\em generic} solutions reduces to understanding the boundary behavior of a {\em single} solution to each of the equation~\eqref{e.kfp} and the adjoint equation~\eqref{e.akfp}.  Here, we use the steady solution; however, significantly less is actually required.  Indeed, we only use mild control of the asymptotic growth of $\tilde \phi$ in certain regimes (e.g., \Cref{l.cN_R}) and that the growth of
\be
	\int f(t,x,v) \tilde \phi(x,v) \dx \dv
\ee
is controlled in time.  Thus, in the general case~\eqref{e.general}, we need only find $g$ with the appropriate boundary behavior and asymptotic growth in $\cN_R$ such that
\be
	\int f(t,x,v) g(t,x,v) \dx\dv
\ee
(at most) grows in a controlled way.  This last requirement is true of any function $g$ such that
\be
	(\partial_t + v\cdot\nabla_x + \nabla_v\cdot a\nabla_v) g
		\lesssim g.
\ee

\subsection{Precise statements of main results: Nash inequalities and the whole space case}

As we discuss in \Cref{s.strategy}, we obtain the main functional inequality (\Cref{l.functional}) for \Cref{t.main} by interpolating between boundary Poincar\'e-type inequalities and a localized Nash inequality.  The localized Nash inequality may be of independent interest, so we state it here.  Let us note that the kinetic notation $\delta_\cdot$, $\cdot^{-1}$, and $H^1_\kin$ are defined in \Cref{s.kinetic}.

\begin{Theorem}\label{t.ws_Nash}
	Fix $s_0 > 0$ and sets $\Omega_1, \Omega_2 \subset \R_+ \times \R^{2d}$ such that there is a bounded open set $B$ with
	\be
		\Omega_1 \circ (\delta_s B)^{-1} \subset \Omega_2
			\qquad\text{ for all } s \in [0,s_0].
	\ee
	Then, for any $g \in H^1_\kin$ and $s\in (0,s_0]$, we have
	\be
		\|g\|_{L^2(\Omega_1)}^2
			\lesssim s \lbr g\rbr_{H^1_\kin(\Omega_2)}\|g\|_{L^2(\Omega_2)}
				+ \frac{1}{s^{4d+2}} \|g\|_{L^1(\Omega_2)}^2.
	\ee
	The implied constant depends only on the choice of $B$ and the dimension.
\end{Theorem}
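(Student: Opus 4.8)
The plan is to prove the localized kinetic Nash inequality (\Cref{t.ws_Nash}) by combining a Fourier-analytic Nash argument — adapted to the kinetic scaling and the transport operator — with a localization device that converts the abstract hypothesis $\Omega_1 \circ (\delta_s B)^{-1} \subset \Omega_2$ into a genuine gain. The starting point is the classical observation of Nash: for a function $u$ on $\R^n$, one splits $\|u\|_{L^2}^2 = \int_{|\xi|\le \rho} |\hat u|^2 + \int_{|\xi|>\rho} |\hat u|^2$, estimates the low frequencies by $\rho^n \|u\|_{L^1}^2$ and the high frequencies by $\rho^{-2}\|\nabla u\|_{L^2}^2$, and optimizes in $\rho$. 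Here, however, we are working on the $(t,x,v)$ space of total kinetic dimension $4d+2$ (the exponent $4d+2$ in the statement is exactly the Nash exponent $n$ for $n = 4d+2$, since kinetic scaling weights $t$ like $s^2$, $x$ like $s^3$, $v$ like $s$, giving homogeneous dimension $2 + 3d + d = 4d+2$), and the only derivatives we control are the ones appearing in $\lbr \cdot \rbr_{H^1_\kin}$ — namely $\nabla_v g$ and the transport/fractional kinetic derivative. So the first step is to recall the precise definition of $H^1_\kin$ from \Cref{s.kinetic} and to set up a frequency decomposition on $\R_+\times\R^{2d}$ adapted to the group structure $\circ$ and the dilations $\delta_s$.

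The second step, which I expect to be the technical heart, is to run the kinetic Nash splitting locally. Rather than working with a global Fourier transform (which interacts badly with the transport term and with the domains $\Omega_i$), I would mollify $g$ by convolving — in the kinetic group sense — with $\delta_s B$, or more precisely with a smooth bump supported there; call the mollified function $g_s$. On $\Omega_1$ one has $g = (g - g_s) + g_s$. The term $g - g_s$ is controlled by the $H^1_\kin$-seminorm of $g$ on $\Omega_2$ times $s$ (this is where the inclusion $\Omega_1\circ(\delta_s B)^{-1}\subset\Omega_2$ is used: the mollification at a point of $\Omega_1$ only sees values of $g$ inside $\Omega_2$), giving a bound $s\, \lbr g\rbr_{H^1_\kin(\Omega_2)} \|g\|_{L^2(\Omega_2)}$ after a Cauchy–Schwarz pairing $\|g - g_s\|_{L^2(\Omega_1)} \lesssim s\lbr g\rbr_{H^1_\kin}$ and $\|g_s\|_{L^2}\lesssim\|g\|_{L^2}$. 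The mollified term $g_s$ is bounded pointwise by $\|g\|_{L^1(\Omega_2)}$ divided by the measure of $\delta_s B$, which scales like $s^{4d+2}$; squaring and integrating over $\Omega_1$ (which we may take bounded, or handle the support via the same inclusion) produces the $s^{-(4d+2)}\|g\|_{L^1(\Omega_2)}^2$ term. The subtlety is that a \emph{naive} convolution mollifier smooths in all $4d+2$ variables but we only have $\nabla_v$-control and the transport (fractional-in-$(t,x)$) control; so the mollifier must be chosen compatibly — a product of a $v$-mollifier and a transport-adapted averaging — and the estimate $\|g - g_s\|_{L^2}\lesssim s\lbr g\rbr_{H^1_\kin}$ must be proven using the fundamental-theorem-of-calculus along $v$ together with a commutator/transport estimate along the drift direction. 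This is precisely the kind of estimate that underlies kinetic averaging lemmas and the hypoelliptic gain.

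The third step is bookkeeping: verify that the implied constant depends only on $B$ and $d$ (it enters through the measure $|B|$ and the shape of the mollifier, plus dimensional constants from the Nash splitting), and check the endpoint $s = s_0$ and the scaling in $s$ are consistent — indeed, both terms on the right-hand side scale correctly under $\delta_s$, which is a good sanity check. The main obstacle, to reiterate, is making the mollification estimate $\|g-g_s\|_{L^2(\Omega_1)}\lesssim s\lbr g\rbr_{H^1_\kin(\Omega_2)}$ rigorous given that $H^1_\kin$ controls only velocity derivatives and the transport operator: one cannot simply invoke Sobolev embedding or a full gradient bound, and the localization to $\Omega_2$ must be threaded through the group convolution carefully so that the hypothesis on $\Omega_1, \Omega_2, B$ is exactly what makes the argument close. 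If a clean convolution-based proof proves awkward because of the transport term, the fallback is to use the known smoothing (averaging-lemma) estimate for the kinetic transport-diffusion operator to replace $g_s$ by the solution of a short-time kinetic Fokker–Planck evolution started from $g$, which enjoys both the $L^1\to L^\infty$ bound with the $s^{-(4d+2)}$ constant and the $L^2$-difference bound by the Dirichlet energy — essentially a parabolic-regularization proof of Nash's inequality transplanted to the kinetic setting.
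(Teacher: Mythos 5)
Your overall plan is, at the top level, the same as the paper's proof of \Cref{p.Nash} (to which \Cref{t.ws_Nash} is reduced): mollify $g$ via the kinetic group convolution $g_s = g*\psi_s$ with $\psi_s$ supported in $\delta_s B$, control the smooth piece via Young's convolution inequality (giving the $s^{-(4d+2)}\|g\|_{L^1(\Omega_2)}^2$ term, with $4d+2$ the homogeneous dimension as you correctly note), control the remainder by the $H^1_\kin$-seminorm, and use the hypothesis $\Omega_1\circ(\delta_s B)^{-1}\subset\Omega_2$ to localize. The bookkeeping and the role of $B$ are also as you describe.

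The gap is in the remainder estimate, which you flag as ``the main obstacle'' but do not close, and which cannot be closed in the form you write it. You take the splitting $g = (g-g_s) + g_s$ and invoke $\|g-g_s\|_{L^2(\Omega_1)}\lesssim s\,\lbr g\rbr_{H^1_\kin(\Omega_2)}$. A direct fundamental-theorem-of-calculus estimate along the group path fails for the transport portion of the path: that step produces $\int_0^{\cdot}(Yg)(\cdot)\,dr$, and controlling this in $L^2$ requires $Yg\in L^2$, whereas $H^1_\kin$ gives $Yg$ only in $L^2_{t,x}H^{-1}_v$. One is thus forced to pair $Yg$ against a test function in $H^1_v$, i.e.\ expand $\|g-g_s\|_{L^2}^2 = \int (g-g_s)^2\,dz$, apply the FTC to one factor, change variables, and use the $H^{-1}_v$--$H^1_v$ pairing. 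But then the test function is a group shift of $g-g_s$, and because $\nabla_v$ does not commute with the shift $\zeta\mapsto \zeta\circ(\rho,0,0)$ (it moves $x\mapsto x+\rho v$), the $\nabla_v$ of that test function picks up an \emph{uncontrolled} term $\rho\,\nabla_x(g-g_s)$; nothing in $H^1_\kin$ bounds $\nabla_x g$. The paper avoids this entirely by never estimating $(g-g_s)^2$: it estimates $\int\chi_R\,(g^2-g_s^2)$, writes $g^2-g_s^2$ as a path integral of $2 g_\theta\,\partial_\theta g_\theta$, and changes variables so that the pairing of $Yg$ is against $g(z)\,\chi_R(\cdot;\theta)$ with $g$ \emph{unshifted} --- only the smooth cutoff $\chi_R$ is shifted, and its $\nabla_x$ is bounded by the cutoff scaling $R^{-3/2}$. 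This bilinear $g^2-g_s^2$ formulation is the key structural idea your proposal is missing. Your fallback via short-time kinetic Fokker--Planck regularization is a plausible alternative route (the semigroup proof of Nash), but it is a genuinely different argument and you do not carry it out; as written, the proposal does not yield the claimed inequality.
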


With this in hand, we can immediately deduce a simple time-decay estimate for the whole-space kinetic Fokker-Planck equation.  This estimate is not new; one can derive it from existing results on fundamental solutions; see, e.g.,~\cite{anceschi2023fundamental, LanconelliPascucci}, although these proofs are quite different from our own.  We only include it here because it is essentially immediate from \Cref{t.ws_Nash}.  It is not our main interest in this paper.

\begin{Corollary}\label{c.ws_time_decay}
	Suppose that $a$ is a symmetric, uniformly elliptic matrix:
	\be
		|\xi|^2 \lesssim \xi \cdot a(t,x,v) \xi
			\qquad\text{ for all } (t,x,v) \in \R_+ \times \R^{2d}
			\text{ and } \xi \in \R^d.
	\ee
	If $f$ is a nonnegative solution to
	\be\label{e.ws_kfp}
		\begin{cases}
			(\partial_t + v\cdot\nabla_x) f = \nabla_v\cdot\left( a \nabla_v f\right)
				\qquad& \text{ in } \R_+\times \R^{2d},\\
			f = f_\init
				\qquad &\text{ on } \{0\} \times \R^{2d},
		\end{cases}
	\ee
	then
	\be
		f(t,x,v) \lesssim \frac{1}{t^{2d}} \int f_\init \dx \dv.
	\ee
\end{Corollary}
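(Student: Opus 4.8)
The plan is to run a Nash–Moser-type iteration on the $L^1 \to L^\infty$ bound using \Cref{t.ws_Nash} as the key ingredient, combined with the basic $L^1$ contraction and $L^2$ energy identity for \eqref{e.ws_kfp}. First I would record the two standard a priori facts. Since $f$ is a nonnegative solution and $a$ is uniformly elliptic, $\|f(t,\cdot,\cdot)\|_{L^1(\R^{2d})}$ is nonincreasing in $t$ (integrate the equation; the transport term integrates to zero on $\R^{2d}$ and the diffusion term integrates to zero), so $\|f(t)\|_{L^1} \le \|f_\init\|_{L^1} =: M$ for all $t$. Likewise, multiplying by $f$ and integrating gives the energy dissipation inequality $\tfrac{d}{dt}\|f(t)\|_{L^2}^2 = -2\int \nabla_v f \cdot a \nabla_v f \lesssim -\lbr f(t)\rbr_{H^1_\kin}^2$ — here I use that in the whole-space setting the kinetic $H^1_\kin$ seminorm is controlled by the $v$-gradient via the equation itself (the transport term supplies the fractional $(t,x)$ regularity through hypoellipticity, exactly the mechanism referenced in the introduction); alternatively one takes this as the definition of $\lbr\cdot\rbr_{H^1_\kin}$ adapted to solutions.

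Next I would set up the iteration over dyadic time slices. Fix the target time $t$. For $k \ge 0$ let $t_k = (1 - 2^{-k}) t$ and work on the time-cylinders $Q_k = (t_k, t) \times \R^{2d}$ with $s_k \approx 2^{-k} t$ the length of the interval $(t_k, t_{k+1})$; the "translation" structure $\Omega_1 \circ (\delta_s B)^{-1} \subset \Omega_2$ required by \Cref{t.ws_Nash} is satisfied with $\Omega_1 = Q_{k+1}$, $\Omega_2 = Q_k$, and a fixed bounded kinetic box $B$, for $s$ up to a constant multiple of $s_k$ (this uses that kinetic translations move a point by $\delta_s B$, i.e. $O(s)$ in $v$, $O(s^{3})$... — in any case a controlled amount — in $x$, which fits inside the extra time room $s_k$). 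Applying \Cref{t.ws_Nash} on each slice with $s = c\, s_k$ and Young's inequality to absorb the $\lbr f \rbr_{H^1_\kin}\|f\|_{L^2}$ term using the energy dissipation integrated over $(t_k,t_{k+1})$, I would obtain a recursion of the schematic form
\be
	\sup_{(t_{k+1}, t)} \|f\|_{L^2}^2
		\lesssim \frac{1}{(2^{-k} t)^{4d+2}} \Big( \sup_{(t_k,t)} \|f\|_{L^{4/3}}\Big)^{?}\cdots,
\ee
— more honestly, the clean route is a single iteration in the exponent $p_k = 2^k$: interpolate $\|f\|_{L^{p_k}}$ between $L^1$ and $L^{2 p_k}$, feed it through the Nash estimate applied to $f^{p_k}$ (which is a subsolution up to harmless sign terms since $r \mapsto r^{p_k}$ is convex and increasing), and track the constants. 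This produces
\be
	\|f\|_{L^{p_{k+1}}(Q_{k+1})}
		\lesssim \big(C\, 2^{k(4d+2)} t^{-(4d+2)}\big)^{1/p_k}\, \|f\|_{L^{p_k}(Q_k)}.
\ee
Multiplying the estimates over $k = 0,1,2,\dots$ and summing the exponents $\sum 2^{-k} = 2$ and $\sum k 2^{-k} = 2$, the constant telescopes to $C\, t^{-2(4d+2)} = C\, t^{-(8d+4)}$ applied to $\|f\|_{L^1(Q_0)}^{?}$; one finds $\|f(t)\|_{L^\infty} \lesssim t^{-(4d+2)}\cdots$. A dimensional check against the scaling $f_\lambda(t,x,v) = f(\lambda^2 t, \lambda^3 x, \lambda v)$ of \eqref{e.ws_kfp} shows the correct homogeneity is $\|f(t)\|_{L^\infty} \lesssim t^{-2d} \|f_\init\|_{L^1}$ (since $\|f_\init\|_{L^1}$ scales like $\lambda^{-4d}$ and $f$ like $\lambda^{0}$, giving $t$-power $= -(4d)/2 = -2d$), so the bookkeeping must be arranged — via the correct interpolation exponents in the Nash step, exactly as in the classical Nash argument where the naive iteration overcounts by a factor of two in the exponent — to land on $t^{-2d}$ rather than $t^{-(4d+2)}$.

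The main obstacle I anticipate is precisely this constant-tracking in the Moser iteration: ensuring that (i) the passage from $f$ to $f^{p}$ genuinely yields a subsolution so that the energy estimate and \Cref{t.ws_Nash} apply with the rough matrix $a$ (the ellipticity of $a$ must survive the chain rule, which it does since $\nabla_v (f^p) = p f^{p-1}\nabla_v f$ and $\int \nabla_v(f^p)\cdot a \nabla_v (f^p) = p^2 \int f^{2p-2}\nabla_v f \cdot a \nabla_v f \gtrsim \lbr f^p\rbr_{H^1_\kin}^2$ with the hypoelliptic gain), (ii) the geometric set-inclusion hypothesis of \Cref{t.ws_Nash} holds uniformly along the dyadic scales with a single box $B$, and (iii) the product of the per-step constants $\prod_k (C 2^{k(4d+2)})^{2^{-k}}$ converges and combines with the $t$-powers to the sharp $t^{-2d}$. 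Points (i) and (ii) are routine adaptations of standard kinetic De Giorgi–Nash–Moser technology (cf. the references cited in the introduction), and (iii) is the classical Nash computation; the only genuinely kinetic subtlety is that the $H^1_\kin$ seminorm on the right of \Cref{t.ws_Nash} is not controlled by $\int \nabla_v f \cdot a\nabla_v f$ alone but requires the transport term — this is where I would invoke the whole-space hypoelliptic regularization (or, more cheaply, note that \Cref{t.ws_Nash} as stated already packages whatever is needed, so one simply verifies $\lbr f^p \rbr_{H^1_\kin(Q_k)}$ is bounded by the dissipation plus the equation, integrated in time).
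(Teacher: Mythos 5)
Your strategy (Moser iteration in $L^{p_k}$ with $p_k = 2^k$) is genuinely different from the paper's, and in its current form it has two real gaps; the paper sidesteps both.

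The paper's route is the classical Nash $L^1 \to L^2 \to L^\infty$ scheme, not Moser iteration. One first proves the \emph{single} estimate $\|f(t)\|_{L^2} \lesssim t^{-d}\|f_\init\|_{L^1}$ by combining the energy identity with \Cref{t.ws_Nash} applied to $f$ itself (not to powers of $f$), closing via a ``first touching'' continuity argument: set $t_0 = \sup\{t : E(s) \le \alpha s^{-2d}\ \forall s\le t\}$ and derive a contradiction if $t_0<\infty$. Then one observes that the solution operator $\tilde S_t$ of the adjoint equation $(\partial_t - v\cdot\nabla_x)f = \nabla_v\cdot(a\nabla_v f)$ satisfies the same $L^1\to L^2$ bound, so its adjoint $\tilde S_t^*:L^2\to L^\infty$, which is again a solution operator of \eqref{e.ws_kfp}, satisfies $\|\tilde S_t^*\|_{L^2\to L^\infty}\lesssim t^{-d}$; composing $f(t) = \tilde S_{t/2}^* S_{t/2} f_\init$ gives $t^{-2d}$ immediately. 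No iteration, no subsolutions, no per-step constant tracking.

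Your approach, as written, is not closed. First, your own bookkeeping produces $t^{-(4d+2)}$ instead of $t^{-2d}$ and you acknowledge that ``the bookkeeping must be arranged\dots to land on $t^{-2d}$'' without doing so; flagging that a dimensional check gives a different answer than your computation is identifying a gap, not filling it. Second, and more structurally: the relation $\lbr g\rbr_{H^1_\kin} \approx \|\nabla_v g\|_{L^2}$ (equation \eqref{e.H1_kin_f}) relies on the \emph{equation}, i.e.\ $Yg = \nabla_v\cdot(a\nabla_v g)$, so that $\|Yg\|_{L^2_{t,x}H^{-1}_v} \lesssim \|a\nabla_v g\|_{L^2}$. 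For $g = f^p$ with $p>1$ you only have the \emph{sub}solution inequality $Y(f^p) = \nabla_v\cdot(a\nabla_v f^p) - p(p-1)f^{p-2}\nabla_v f\cdot a\nabla_v f$, i.e.\ $Yg = \nabla_v\cdot(a\nabla_v g) - \mu_p$ with a nonnegative defect $\mu_p$; the $H^1_\kin$-seminorm requires bounding $\|Yg\|_{L^2_{t,x}H^{-1}_v}$, hence $\|\mu_p\|_{L^2_{t,x}H^{-1}_v}$, and there is no a priori reason the nonnegative measure $\mu_p$ lies in $H^{-1}_v$ with a usable bound. This is precisely the well-known obstruction in kinetic De Giorgi theory (it is why the kinetic De Giorgi papers you gesture at do substantial additional work), and it cannot be dismissed as a ``routine adaptation.'' Your remark that ``\Cref{t.ws_Nash} as stated already packages whatever is needed'' is not correct: \Cref{t.ws_Nash} is a purely functional inequality in $H^1_\kin$ and supplies no information about whether $f^p$ lies in $H^1_\kin$ with controlled seminorm.

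In short: your scaling check and your identification of where the difficulty lies are both sound, but the proof itself is not complete, and the duality shortcut the paper uses (which only ever tests the Nash inequality on true solutions, where $\lbr\cdot\rbr_{H^1_\kin}\approx\|\nabla_v\cdot\|_{L^2}$ is available) eliminates exactly the two obstacles you ran into.
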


If one includes lower terms such as $b \cdot \nabla_v f + c f$ in~\eqref{e.ws_kfp}, the bounds above will hold with (possibly) an additional exponentially growing in $t$ factor depending only on $\|c\|_\infty$ and $\|b\|_\infty$.

Finally, we note that the well-posedness of~\eqref{e.kfp} and~\eqref{e.ws_kfp} with merely weighted $L^1_w$ initial data follows simply using ideas in~\cite{HJV,zhu2022regularity} and standard approximation schemes.  By the established regularity theory, solutions will be classical in the interior (and up to the boundary in $x$) and continuous in time up to $t=0$ in $L^1_w$. 
As such, we omit further discussion of this.

\subsection{Organization of the paper}

To aid the reader, we give a discussion of the general strategy of the proof in the parabolic setting in \Cref{s.strategy}.  It is here that we also give an indication of the main difficulties in the paper. 

The main functional analysis and group theory setup that is appropriate for kinetic equations is given in~\Cref{s.kinetic}.  

The proof of \Cref{t.main} occurs in \Cref{s.main,s.functional}.  The former contains the proof of \Cref{t.main} subject to a few inequalities that are stated there.  The main inequality  stated in \Cref{s.main} (\Cref{l.functional}) relies on a decomposition of $\H\times \R^d$ into a ``Nash'' region $\cN_R$ and two ``Poincar\'e'' regions $\cP_R$ and $\cO_R$.  See \Cref{f.regions}.  This main inequality, proved in \Cref{s.functional}, follows by establishing a localized Nash inequality in $\cN_R$ and Poincar\'e-type inequalities in $\cP_R$ and $\cO_R$.  These proofs are also contained in \Cref{s.functional}.

The construction of $\mu$ occurs in \Cref{s.zeta}, and several technical lemmas are proved in \Cref{s.technical}.

Finally, the whole space case is briefly considered in \Cref{s.ws_time_decay}.

\subsection{Notation}\label{s.notation}

We use $z$ to denote a generic point $(t,x,v)$.  When $z$ is decorated with notation, the coordinates inherit that decoration; e.g., $z' = (t',x',v')$.

We write $A\lesssim B$ is $A \leq CB$ for a constant $C$ depending only on dimension.  We write $A\approx B$ if $A\lesssim B$ and $B \lesssim A$.

In order to clearly define when we use the dynamics associated to~\eqref{e.kfp}, we reserve $f$ for its solutions and use $g$ (or other letters) for any generic element of $H^1_\kin$.

Whenever the domain of integration is not specified, it is assumed to be in $\H\times \R^d$ if it is an integral with respect to $dv dx$, $\H$ if it is an integral with respect to $dx$, or $\R^d$ if it is an integral with respect to $dv$.

We write $v = (v_1, \overline v)$, where $\overline v \in \R^{d-1}$. Similarly, $x = (x_1, \overline x)$.  We use the tilde to denote reflection in $v$:
\be\label{e.tilde}
	\tilde f(t,x,v) = f(t,x,-v).
\ee
This is defined similarly for functions that depend only on $(x,v)$ or only on $v$. 
We use the star to denote taking the adjoint of an operator; that is $A^*$ is the adjoint of an operator $A$.  Note that this is some overlap here because the adjoint equation of~\eqref{e.kfp} is
\be\label{e.akfp}
	\begin{dcases}
		(\partial_t - v\cdot\nabla_x) \tilde f = \Delta_v \tilde f
			\qquad&\text{ in } \R_+ \times \H \times \R^d,\\
		\tilde f(t,0,v) = 0
			\qquad&\text{ on } \R_+ \times \gamma_+,\\
		\tilde f(0,\cdot,\cdot) = \tilde f_\init
			\qquad &\text{ in } \H\times \R^d
	\end{dcases}
\ee
whose solution is $\tilde f$ if $f$ solves~\eqref{e.kfp}.
%
%whose steady solution is $\phi$, is~\eqref{e.akfp -- WHERE TO PUT THIS?} has the steady solution $\tilde \phi$.  More generally, if $f$ is a solution to~\eqref{e.kfp}, then $\tilde f$ is a solution to

We sometimes use $Y$ as shorthand for the transport operator:
\be
	Y = \partial_t + v\cdot\nabla_x.
\ee
While there are some downsides to this notation -- it is opaque and it suppresses the dependence on $v$ -- it simplifies many expressions significantly and it follows a standard convention.

\subsection{Acknowledgements}

CH thanks Juhi Jang for helpful discussions of the results in~\cite{HJV,HJJ}.  CH was supported by NSF grants DMS-2204615 and DMS-2337666. 
GL  was partially supported by INdAM-GNAMPA Project ``Stochastic mean field models: analysis and applications.''
WW was partially supported by an AMS-Simons travel grant.

\section{The strategy of the proof}\label{s.strategy}

\subsection{Boundary behavior for the heat equation}

Let us recall a simple approach to understanding the boundary behavior for the heat equation in one dimension:
\be\label{e.heat}
	\begin{cases}
		h_t =  h_{xx}
			\qquad &\text{ in } \R_+ \times \R_+,\\
		h(t,0) = 0
			\qquad &\text{ for all } t>0,\\
		h(0,x) = h_\init(x)
			\qquad &\text{ for all } x > 0.
	\end{cases}
\ee
This will give the basic outline of our proof for the kinetic Fokker-Planck equation~\eqref{e.kfp}.

We observe that the equation above is formally self-adjoint and $x$ is a steady solution to it; hence,
\be\label{e.c062006}
	\frac{d}{dt} \int_0^\infty x h \dx
		=  \int_0^\infty x h_{xx} \dx
		= 0.
\ee
Next, we notice the energy equality
\be\label{e.energy3}
	\frac{d}{dt} \int_0^\infty h^2 \dx
		= - \int_0^\infty |h_x|^2 \dx.
\ee
In the whole space, it suffices to use the Nash inequality,
\be
	\Big( \int g^2 \dx\Big)^3
		\lesssim \Big( \int |g_x|^2 \dx\Big)
			\Big( \int g \dx\Big)^4
			\qquad\text{ for all } g \geq 0,
\ee
to control the right hand side of~\eqref{e.energy3}.  However, we need to use the added information in~\eqref{e.c062006}.

To this end, we fix an arbitrary $R>0$, apply the Poincar\'e inequality on $(0,R)$ and the Nash inequality on $(R,\infty)$: for any $g$,
\be\label{e.close_far}
	\int_0^\infty g^2 \dx
		\lesssim R^2 \int_0^R |g_x|^2 \dx
			+ \Big( \int_R^\infty |g_x|^2 \dx\Big)^{\sfrac13}
			\Big( \int_R^\infty g \dx\Big)^{\sfrac43}.
\ee
Here we are assuming that the Nash inequality can be localized.  The usual proof using the Fourier transform does not allow this, but it is not difficult to develop a different proof that does.  Then we add an $\sfrac{x}{R}$ factor to the $L^1$-term to obtain
\be\label{e.smuggle_steady}
	\int_0^\infty g^2 \dx
		\lesssim R^2 \int_0^R |g_x|^2 \dx
			+ \frac{1}{R^{\sfrac43}}\Big( \int_R^\infty |g_x|^2 \dx\Big)^{\sfrac13}
			\Big( \int_R^\infty x g \dx\Big)^{\sfrac43}.
\ee
Optimizing in $R$ yields
\be\label{e.he_Nash}
	\int_0^\infty g^2 \dx
		\lesssim \Big( \int_0^\infty |g_x|^2 \dx\Big)^{\sfrac35}
			\Big( \int_0^\infty x g \dx\Big)^{\sfrac45}.
\ee
Applying~\eqref{e.he_Nash} to $h$ and folding it into~\eqref{e.energy3}, we deduce
\be\label{e.diff_ineq}
	\frac{d}{dt} \int_0^\infty h^2 \dx
		\lesssim - \frac{ \Big( \int h^2 \dx\Big)^{\sfrac53}}{\Big(\int x h \dx \Big)^{\sfrac43}}
		= - \frac{ \Big( \int h^2 \dx\Big)^{\sfrac53}}{\Big(\int x h_{\rm in} \dx \Big)^{\sfrac43}},
\ee
where the equality is due to~\eqref{e.c062006}. 
Solving this differential inequality gives us the desired $L^1_x \to L^2$ bound:
\be
	\Big(\int_0^\infty h(t,x)^2 \dx\Big)^{\sfrac12}
		\lesssim \frac{1}{t^{\sfrac34}} \int_0^\infty xh_\init \dx.
\ee
Letting $S_t: L^1_x \to L^2$ be the solution operator to~\eqref{e.heat}, this translates to
\be
	\|S_t\|_{L^1_x\to L^2} \lesssim \frac{1}{t^{\sfrac34}}.
\ee
On the other hand, the adjoint operator $S^*_t : L^2 \to L^\infty_{\sfrac{1}{x}}$ is also a solution operator to~\eqref{e.heat} because~\eqref{e.heat} is formally self-adjoint and must satisfy
\be
	\|S_t^*\|_{L^2 \to L^\infty_{\sfrac1x}}
		= \|S_t\|_{L^1_x\to L^2} \lesssim \frac{1}{t^{\sfrac34}}.
\ee
Hence, we have
\be
	\|h(t)\|_{L^\infty_{\sfrac1x}}
		= \|S^*_{\sfrac{t}{2}}S_{\sfrac{t}{2}}h_\init\|_{L^\infty_{\sfrac1x}}
		\lesssim \frac{1}{t^{\sfrac34}} \|S_{\sfrac{t}{2}}h_\init\|_{L^2}
		\lesssim \frac{1}{t^{\sfrac34}} \frac{1}{t^{\sfrac34}} \|h_\init\|_{L^1_x}.
\ee
In other words,
\be
	h(t,x) \lesssim \frac{x}{t^{\sfrac32}} \int y h_\init \dy,
\ee
which provides the desired (sharp) boundary regularity.

\subsection{Basic ideas in the kinetic setting}

\subsubsection*{The energy equality and the \texorpdfstring{$H^1_\kin$-norm}{kinetic Sobolev norm}}
Let us point out the basic changes that must occur to put the above plan into action.  First, we already see a difference in the energy equality for~\eqref{e.kfp}:
\be\label{e.energy}
	\frac12\frac{d}{dt} \int f^2 \dx\dv
		+ \int |\nabla_v f|^2 \dx\dv
		+ \int_{\gamma_+} |v_1| f d\overline x \dv
		= 0,
\ee
where $x = (x_1, \overline x)$.  One might be tempted to drop the boundary term above since it has a ``good'' sign; however, we see below that this is not possible.  

Using the definition of the $H^1_\kin$-norm in~\eqref{e.H1_kin_norm},  we immediately obtain, from~\eqref{e.kfp},
\be\label{e.H1_kin_f}
	\lbr f\rbr_{H^1_\kin([T_1,T_2]\times \H\times \R^d)} \approx \|\nabla_v f\|_{L^2([T_1,T_2]\times \H\times \R^d)}.
\ee
In this sense, we immediately obtain bounds on the $H^1_\kin$-norm of $f$ by integrating~\eqref{e.energy} in time.

At this point, we notice our first roadblock to the strategy above: the $H^1_\kin$-norm involves a time integral, meaning that any inequality following from a Nash-type inequality will involve time integrals.  Thus, no differential inequality, such as~\eqref{e.diff_ineq} is possible.  This, however, is  not too difficult to overcome -- it essentially amounts to using the integral form of Gr\"onwall's inequality instead of the differential form.  

\subsubsection*{The Poincar\'e bound}
Next, after determining the appropriate notion of distance, we may start to follow the decomposition in~\eqref{e.close_far}.  First, we can define the set $\cP_R$ of points $(x,v)$ within distance $R$ to the boundary $\gamma_-$ on which we have zero boundary data.  See \Cref{f.regions}. 
\domainsfigure
This requires some technical care, but follows a general method of proving the Poincar\'e inequality by integrating $Yf$ and $\nabla_v f$ along a path starting on $\gamma_-$.  Here we are able to follow the ideas of~\cite{AAMN} to obtain an inequality {\em like}
\be\label{e.c062007}
	\|f\|_{L^2([T, T+R]\times \cP_R)}
		\lesssim R\|f\|_{H^1_{\rm kin}([T, T + O(R)]\times \cP_{2R})}.
\ee
See \Cref{p.poincare} for the actual inequality.

\subsubsection*{The outgoing region}
Next, by analogy with the heat equation, one might hope to have a Nash inequality on $\cP_R^c$ and follow the step~\eqref{e.smuggle_steady} in which the steady solution is brought into the integral up to an $R$ factor.  For this, we would need $\tilde \phi \gtrsim R^p$, for some $p$, on $\cP_R^c$.  In view of~\eqref{e.phi_asymp}, $\tilde \phi$ is exponentially small when $x_1 \ll - v_1^3$.  Hence, this is not immediately possible.

This leads us to the observation that many of the particles in $\cP_R^c$ leave the domain through $\gamma_+$.  Defining $\cO_R$ to be these outgoing particles (over a time interval of size $O(R)$), we can argue as in the Poincar\'e case to obtain a similar inequality to~\eqref{e.c062007} that includes the boundary term from~\eqref{e.energy}.

It is easy to see $\cO_R$ is approximately those particles such that $v_1 \leq - O(\sqrt R)$ and %, when $v_1$ is ``positive enough,'' 
$x_1 \leq - O(R) v_1$.  The latter reflects that particles can be taken to the boundary by pure transport over time $O(R)$.

\subsubsection*{The Nash region}
At this point, we have no choice but to take the Nash inequality on the set $\cN_R$ of points greater than distance $R$ from $\gamma_\pm$.  See \Cref{t.ws_Nash} and \Cref{p.Nash}.  The issue is in connecting the $L^1$-norm that appears there with $\tilde \phi$.  When $x_1 \geq -O(R) v_1^3$, we have $\tilde \phi\gtrsim R^{\sfrac14}$, and we can argue exactly as in~\eqref{e.smuggle_steady}.  This, however, is not the entirety of $\cN_R$.

We prove the Nash inequality via an interpolation argument.  It proceeds by suitably smoothing $g$ to obtain $g_\eps$, writing
\be
	\|g\|_{L^2}
		\leq \|g_\eps\|_{L^2} +  \|g_\eps - g\|_{L^2},
\ee
bounding the first term by the $L^1$-norm of $g$ via a kinetic Young's inequality, and then bounding the second term by the $H^1_\kin$-norm of $g$.  This second term requires some technical care due to the $H^{-1}_v$-$H^1_v$ pairing in the $H^1_\kin$-norm (see \eqref{e.H1_kin_norm}).  The result \Cref{p.Nash} follows by varying $\eps$.

\subsubsection*{The isolated region}
 This leaves the isolated region $\cI_R\subset \cN_R$ of points $- O(R) v_1 \leq x_1 \leq - v_1^3$, on which $\tilde \phi$ is small.  This is the region where, phenomenologically, the behavior of~\eqref{e.kfp} is most different from~\eqref{e.heat}.  In the other regions, the computations, while technically more complicated, bore some resemblance towards their analogues for the heat equation.

This region and its role was discussed around~\eqref{e.c042301} and~\eqref{e.c062008}.  There it is pointed out that no inequality is possible purely using $\tilde \phi$.   
As mentioned there, we overcome this by the construction of a weight $\mu_R$ that encapsulates the movement of particles into and out of $\cI_R$.  We summarize by noting that we get an inequality like
\be
	\|f\|_{L^1([T,T+R]\times \cI_R)}
		\lesssim R^{\sfrac34} \int f_\init \tilde \phi \dx\dv 
			+ R^{\sfrac34} \int f_\init \tilde \mu_R \dx\dv.
\ee
See \Cref{l.isolated_estimate} and~\eqref{e.c062009}.

\section{Kinetic functional analysis}\label{s.kinetic}

\subsection{\texorpdfstring{The functional space $H^1_\kin$}{The kinetic Sobolev space}}
Let us define the space
\be
	H^1_{\kin,0}((T_1,T_2)\times \Omega\times \R^d)
		= \{f \in H^1_\kin((T_1,T_2) \times \Omega) : f(t,x,v) = 0 \ \text{ if } (x,v)\in \partial_\kin \Omega\}.
\ee
where
\be
	\partial_\kin \Omega
		= \left\{(x,v) : x \in \partial \Omega, v \cdot \eta(x) < 0\right\},
\ee
and $\eta(x)$ is the outward pointing normal vector to $\partial \Omega$.  We define the semi-norm on this by
\be\label{e.H1_kin_norm}
	\lbr f\rbr_{H^1_\kin}
		= \|\nabla_v f\|_{L^2}
			+ \sup_{h \in H^1_\kin, \|\nabla_v h\|_{L^2} = 1} \int_{T_1}^{T_2} \int_{\Omega \times \R^d} (Yf)(t,x,v) h(t,x,v) \dv\dx \dt.
\ee
In some sense, the last integral should really be understood as an $H^{-1}_v$-$H^1_v$ pairing in the $v$-variable that is equal to the integral if $u$ and $g$ are sufficiently smooth.  We abuse notation, however, and simply write the integral.  This is justified due to the density of smooth functions; see discussion in~\cite{AAMN}.  A norm on $H^1_{\kin,0}$ is obtained by including the $L^2$-norm as well.  One can then construct $H^1_{\kin,0}$ as the closure of $C^\infty_c$ functions under this norm.

Let us note that there is not accepted convention on the ``correct'' kinetic Sobolev space.  There are several approaches to kinetic Besov and Sobolev spaces, e.g.~\cite{AAMN, GarofaloTralli, PascucciPesce}.  We use the one proposed by Albritton, Armstrong, Mourrat, and Novack in~\cite{AAMN} as it appears to pair well with the equation~\eqref{e.kfp}.  Indeed, using~\eqref{e.H1_kin_norm}, one immediately obtains an $H^1_\kin$-bound from the energy equality~\eqref{e.energy} (see  the discussion below~\eqref{e.energy}).

\subsection{The Lie group structure, kinetic distance, and kinetic convolution}

To aid the reader, let us review standard facts on the scaling and Lie group structure relevant to kinetic Fokker-Planck equations.  This simplifies many arguments notationally and technically.

The equation~\eqref{e.kfp} has a 2-3-1 scaling law; that is, it is invariant under dilations
\be\label{e.dilation}
	\delta_r z = (r^2 t, r^3 x, r v).
\ee
Given $z, z'$, we define
\be\label{e.Galilean}
	z\circ z' = (t + t', x + x' + t' v, v + v').
\ee
Roughly, this reflects the structure of~\eqref{e.kfp} that allows mass to move by diffusion in $v$ and by transport in $(t,x)$.  Indeed, if a unit of mass is at $(x,v)$ and we move forward in time by $t'$ units, our mass shifts to $x \mapsto x + t'v$.  
In fact, one sees that, for a fixed $z_0$, $\tilde f(z) = f(z_0 \circ z)$ solves the first equation in~\eqref{e.kfp}.  This is sometimes referred to as the Galilean invariance of~\eqref{e.kfp} and is at the heart of why $\circ$ is the appropriate nothing of translation.

Clearly this action is invertible with
\be
	z^{-1} = (-t, -x + tv, -v),
\ee
whence
\be
	\begin{split}
		&z^{-1} \circ \tilde z
			 = (\tilde t - t,\tilde  x - x - (\tilde t- t) v, \tilde v - v)
		\quad \text{ and}
		\\&
		z \circ \tilde z^{-1}
			= (t-\tilde t, x -\tilde x - \tilde t (v-\tilde v), v - \tilde v).
	\end{split}
\ee
Note the group action is non-commutative but associative.

Given two sets $A, B \subset \R^{2d + 1}$, we can analogously define the Lie action between them:
\be\label{e.circ_sets}
	A\circ B = \{a\circ b : a \in A, b \in B\}.
\ee
and we also define the set of inverses
\be
	B^{-1} = \{b^{-1} : b \in B\}.
\ee
This will play a role in understanding how $A$ and $A_\eps$ relate to each other integrals of the form
\be
	\int_{A_\eps} u_\eps \dz
		\approx \int_A u \dz,
\ee
where $u_\eps$ is defined via convolution of $u$ with a compactly supported mollifier (see \Cref{l.Youngs}).

There are several norms and distances that one may choose.   Here, we follow~\cite{imbert2020smooth} and use
\be\label{e.norm}
	d_\kin(z',z) = \VERT z^{-1} \circ z'\VERT
		\quad\tand\quad
	\VERT z\VERT
		= \min_{w\in \R^d} \left[ \max\left\{ |t|^{\sfrac12}, |x - t w|^{\sfrac13}, |v-w|, |w|\right\}\right].
\ee
We point out that this norm respects the 2-3-1 scaling and Galilean invariance of the equation~\eqref{e.kfp}.  Indeed, 
\be
	d_\kin(\delta_r z_1, \delta_r z_2)
		= r d_\kin(z_1, z_2)
	\quad\tand\quad
	d_\kin(z \circ z_1, z\circ z_2)
		= d_\kin(z_1, z_2).
\ee

An advantage to this choice of distance and norm is that
\be
	\VERT z \circ z'\VERT
		\leq \VERT z\VERT + \VERT z'\VERT,
\ee
so that the triangle inequality for $d_\kin$ holds:
\be\label{e.defQ}
	\begin{split}
		d_\kin(z_1, z_3)
			&= \VERT z_3^{-1} \circ z_1\VERT
			= \VERT z_3^{-1}\circ z_2 \circ z_2^{-1} \circ z_1\VERT
			\\&
			\leq \VERT z_3^{-1} \circ z_2\VERT
				+ \VERT z_2^{-1} \circ z_1\VERT
			= d_\kin(z_2,z_3) + d_\kin(z_1,z_2).
	\end{split}
\ee
One can easily check that $d_\kin$ is symmetric and positive definition, and, hence, it is a metric (see~\cite[Proposition~2.2]{ImbertSilvestre_Schauder}).  Naturally, one defines the kinetic cylinders
\be
	Q_r(z_0) = \{z : t \leq t_0, d_\kin(z,z_0) \leq r\}.
\ee

We will, importantly, consider the distance between a point and a set.  This is defined in the traditional way:
\be
	\dist(S,z) = \inf_{s\in S} d_\kin(s,z).
\ee
It is sometimes useful to use the obvious equality
\be\label{e.dist2}
	\dist(S,z)
		= \inf\left\{ \VERT z'\VERT : z \circ z' \in S\right\}.
\ee
Indeed, for every $s\in S$, we can take $z' = z^{-1}\circ s$, whence $\VERT z'\VERT = d_\kin(s,z)$.

Finally, we define the kinetic convolution:
\be\label{e.convolution}
	(f*g)(z) = \int f(\tilde z) g(\tilde z^{-1} \circ z) \dtz.
\ee
We note that this conflicts with the standard notation for convolution; however, as that does not appear in this article, there is no risk of confusion.  We sometimes convolve $f$ and $g$ where $g$ has no time dependence.  In this case, we abuse notation and denote it the same way.
%\be
%	(f*g)(z) = \int f(\tilde z) g(\tilde z^{-1} \circ z) \dtz
%		= \int f(z\circ \tilde z^{-1}) g(\tilde z) \dtz.
%\ee
We note that, for any $i$,
\be\label{e.c0515601}
	\partial_{v_i} (f*g)
		= f*(\partial_{v_i}g)
	\qquad\text{ and }\qquad
	Y (f*g)= f*(Yg).
\ee

We refer to \cite[Section~3.3]{imbert2020smooth} and~\cite[Section~2]{SilvestreBoundary} for more in-depth discussion.

\section{Statement of the main propositions and proof of the main theorem}\label{s.main}

\subsection{The Poincar\'e, Nash, and outgoing regions}

We first decompose $\H\times \R^d$ into natural subdomains on which different functional inequalities hold. Let
\be\label{e.regions}
	\begin{dcases}
		\cP_R &= \{ (x,v) \in \H\times \R^d: \dist(\R\times \gamma_-, (0,x,v)) \leq \sqrt R\},
		\\
		\cO_R &= \{ (x,v) \in (\H\times \R^d)\setminus \cP_R : \dist(\R\times \gamma_+, (0,x,v)) \leq \sqrt{\sfrac{R}{10}}\}, \quad\tand
		\\
		\cN_R & %= \left(\H\times \R^d\right) \setminus \left( \cP_R \cup \cO_R\right)
			%\\&
			%\subset 
			=\{(x,v) \in \H\times \R^d : \dist(\R\times \partial \H \times \R^d, (0,x,v)) \geq \sqrt{\sfrac{R}{10}}\}.
	\end{dcases}
\ee
The reason for the difference in choice of distance for $\cO_R$ and $\cN_R$ is technical and related to the fact that we want $v_1$ to be bounded away from zero when $(x,v) \in \cO_R$.

Clearly $\cP_R \cup \cO_R\cup \cN_R$ is a decomposition of $\H\times \R^d$.  In the proof we handle the estimates on each set separately.  Along these lines, we require cutoff functions with nice scaling properties for each set.  For this, it is useful to note that
\be\label{e.domain_scaling}
	\cP_R = \delta_{\sfrac{1}{\sqrt R}}P_1,
	\quad
	\cO_R = \delta_{\sfrac{1}{\sqrt R}}O_1,
	\quad\text{ and }\quad
	\cN_R = \delta_{\sfrac{1}{\sqrt R}}N_1.
\ee
It is sometimes helpful to keep in mind that we eventually choose $R = O(t)$.  In this sense, $\cO_R$ and $\cP_R$ are, roughly, the sets in which transport can connect $(x,v)$ to the boundaries $\gamma_-$ and $\gamma_+$, respectively, in time $O(t)$.  The one subtlety is that, for $\cP_R$, we allow ``jumps'' in velocity of size $O(\sqrt R)$, while we use pure transport in $\cO_R$.

%While $\cP_R$ and $\cN_R$ have a nice montonicity property, that is, that
%\be
%	\cP_R \subset P_{R'}
%	\quad \text{ and } \quad
%	N_{R'} \subset \cN_R
%	\qquad\text{ for any } R' > R > 0,
%\ee
%the same is not true for $\cO_R$.  For the convenience of constructing cutoff function, we let
%\be
%	\Theta_R = \{(x,v) \in (\H \times \R^d) \setminus \cP_{\sfrac{R}{4}} : \dist(\R\times \gamma_+) \leq \sqrt R\}.
%\ee
%We note that this satisfies the same scaling properties as in~\eqref{e.domain_scaling}.

\subsection{The main propositions and lemmas}

%The proof of \Cref{t.main} involves combining a general functional inequality, that is, one that holds for any $g\in H^1_\kin$, with a bound on how solutions $f$ to~\eqref{e.kfp} behavior far to the ``bottom right,'' that is, where $x_1 \gg 1$ and $v_1 \ll -1$.  
The proof of \Cref{t.main} involves combining two estimates: the first is a general functional inequality that holds for any $g\in H^1_\kin$, the second is a bound on how solutions $f$ to~\eqref{e.kfp} have far to the ``bottom right,'' that is, where $x_1 \gg 1$ and $v_1 \ll -1$ (the isolated region $\cI_R$ in \Cref{f.regions}).
This latter region is, on the time scale $t\approx R$, isolated from the boundaries, but is not on the infinite time scales on which $\tilde \phi$ and $\phi$ are defined.  We break these into separate estimates at this point because both may be of an independent interest.

Let us state our general functional inequality here.  It arises by, roughly, combining Poincar\'e type inequalities for $x_1 \ll \max\{|v_1|^3,|v_1|\}$ with the Nash inequality (\Cref{p.Nash}) when $x_1 \gg \max\{|v_1|^3, |v_1|\}$.  Its proof is in \Cref{s.functional}.
\begin{Lemma}\label{l.functional}
	Fix $R,\delta >0$.  Suppose that $g\in H^1_{\kin,0}\left( [T_1-2R,T_2]\times \H \times \R^d\right)$ with $T_1 \geq 2R$.  Then
	\be
		\begin{split}
			\int_{T_1}^{T_2} \int &g(z)^2 \dz
				- \delta \int_{T_1 - 2R}^{T_2} \int g(z)^2 \dz
				\\&\lesssim \frac{R}{\delta} \lbr g\rbr_{H^1_\kin([T_1-2R,T_2+R]\times \H \times \R^d)}^2
						+ R \int_{T_1}^{T_2+R} \int_{\R^{d-1}\times \tilde \H} |v_1| g(t,(0, \overline x),v)^2 \, d \overline x \dv dt
						\\&\qquad
						+ \frac{1}{R^{2d+1}} \|g\|_{L^1([T_1-2R,T_2]\times \cN_{\sfrac{R}{2}})}^2.
		\end{split}
	\ee
\end{Lemma}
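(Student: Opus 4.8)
The plan is to localize to the three regions $\cP_R$, $\cO_R$, and $\cN_R$ from~\eqref{e.regions}, apply the appropriate functional inequality on each, and then reassemble. First I would fix a smooth partition-of-unity-like family of cutoffs $\chi_\cP, \chi_\cO, \chi_\cN$ adapted to the three regions, using the scaling~\eqref{e.domain_scaling} so that $\chi_\bullet = \chi_\bullet^{(1)}(\delta_{\sqrt R} \cdot)$ for fixed profile functions; this gives clean control $|\nabla_v \chi_\bullet| \lesssim R^{-\sfrac12}$ and $|Y\chi_\bullet| \lesssim R^{-1}$ on the appropriate overlap collars. Write $\int_{T_1}^{T_2}\int g^2 \le \int_{T_1}^{T_2}\int \chi_\cP^2 g^2 + \int_{T_1}^{T_2}\int \chi_\cO^2 g^2 + \int_{T_1}^{T_2}\int \chi_\cN^2 g^2$ (after arranging $\chi_\cP^2 + \chi_\cO^2 + \chi_\cN^2 \ge 1$). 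On $\cP_R$, since $g$ vanishes on the incoming boundary $\gamma_-$ and points of $\cP_R$ are within kinetic distance $\sqrt R$ of $\R\times\gamma_-$, I would invoke the Poincaré-type inequality (the analogue of~\eqref{e.c062007}, i.e.\ \Cref{p.poincare}) to bound $\int \chi_\cP^2 g^2$ by $R\lbr g\rbr_{H^1_\kin(\cdots \times \cP_{2R})}^2$ over the enlarged time interval $[T_1-2R, T_2+R]$ (the widening in time is exactly what is built into the statement). On $\cO_R$, the same path-integration argument starting from $\gamma_+$ applies, but now the path may hit the outgoing boundary, producing the boundary term $R\int |v_1| g(t,(0,\overline x),v)^2\,d\overline x\dv\,dt$ from the energy identity~\eqref{e.energy}; this is the origin of the second term on the right-hand side. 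On $\cN_R$, points are at kinetic distance $\ge \sqrt{R/10}$ from the whole boundary, so I would apply the localized whole-space Nash inequality \Cref{t.ws_Nash} with $s = \sqrt R$ (choosing $\Omega_1 = $ support of $\chi_\cN$, $\Omega_2 = \cN_{R/2}$, and $B$ a fixed unit kinetic ball so that $\Omega_1 \circ (\delta_{\sqrt R} B)^{-1} \subset \Omega_2$ — this is where the factor $\sqrt R$ and the exponent $4d+2$ enter, matching $\frac{1}{R^{2d+1}}\|g\|_{L^1}^2$). Nash gives $\sqrt R \lbr g\rbr_{H^1_\kin(\cN_{R/2})}\|g\|_{L^2(\cN_{R/2})} + R^{-(2d+1)}\|g\|_{L^1(\cN_{R/2})}^2$.

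The remaining work is to control the cross terms generated by commuting the cutoffs through $Y$ and $\nabla_v$ in the $H^1_\kin$-seminorm — that is, $\lbr \chi_\bullet g\rbr_{H^1_\kin}$ versus $\lbr g\rbr_{H^1_\kin}$ — and to absorb the $\|g\|_{L^2}$-type terms that appear. For $\nabla_v$ this is harmless: $\|\nabla_v(\chi g)\|_{L^2} \le \|\chi\nabla_v g\|_{L^2} + R^{-\sfrac12}\|g\|_{L^2}$. For the $Yf$-pairing term in~\eqref{e.H1_kin_norm} one must be a little careful because it is genuinely an $H^{-1}_v$–$H^1_v$ duality, but since $Y(\chi g) = \chi Yg + (Y\chi) g$ and $Y\chi$ is a bounded multiplier of size $R^{-1}$ supported in the collar, the extra contribution is $\lesssim R^{-1}\|g\|_{L^2}\cdot(\text{test function }L^2)$, hence $\lesssim R^{-1}\|g\|_{L^2}$ after taking the supremum; so $\lbr \chi_\bullet g\rbr_{H^1_\kin} \lesssim \lbr g\rbr_{H^1_\kin} + R^{-\sfrac12}\|g\|_{L^2}$. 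Squaring and using $ab \le \epsilon a^2 + \epsilon^{-1}b^2$, all these error terms take the shape $\frac{R}{\delta}\lbr g\rbr_{H^1_\kin}^2 + \delta\|g\|_{L^2([T_1-2R,T_2]\times\H\times\R^d)}^2$ for a free parameter $\delta$ — and the $\delta\|g\|_{L^2}^2$ piece is precisely the term subtracted on the left-hand side of \Cref{l.functional}. The Nash cross term $\sqrt R \lbr g\rbr_{H^1_\kin}\|g\|_{L^2}$ is handled the same way by Young: $\le \frac{R}{\delta}\lbr g\rbr_{H^1_\kin}^2 + \delta\|g\|_{L^2}^2$.

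Putting the three regional estimates together and collecting errors yields exactly the claimed inequality, with the $H^1_\kin$-seminorm taken over $[T_1-2R, T_2+R]\times\H\times\R^d$ (the union of the enlarged time windows and enlarged spatial regions $\cP_{2R}$, $\cO_{2R}$, $\cN_{R/2}$ all being contained in $\H\times\R^d$), the boundary term over $[T_1, T_2+R]$, and the $L^1$-term over $[T_1-2R,T_2]\times\cN_{R/2}$. One bookkeeping point: the statement writes $\cN_{R/2}$ and the boundary integral over $\R^{d-1}\times\tilde\H$ rather than $\gamma_+$; I would reconcile this by noting $\cN_{R/2}\supset\cN_R$ (larger region, weaker constraint) absorbs the Nash region after the dilation mismatch between $\sqrt{R}$-scaled cutoffs and $\sqrt{R/10}$-defined regions, and that $|v_1|g(t,(0,\overline x),v)^2$ integrated over all of $v$-space dominates the genuine outgoing-boundary contribution $\int_{\gamma_+}|v_1|g^2$ (which is all \eqref{e.energy} actually needs).

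\textbf{Main obstacle.} I expect the hardest step to be the Poincaré/outgoing estimates on $\cP_R$ and $\cO_R$ — specifically, making rigorous the ``integrate $Yg$ and $\nabla_v g$ along a characteristic path to the boundary'' argument in the kinetic $H^1_\kin$ framework where $Yg$ only lives in $H^{-1}_v$. This is where the structure of~\cite{AAMN} must be imported carefully, where the precise geometry of which points in $\cP_R$ (allowing $O(\sqrt R)$ velocity jumps) versus $\cO_R$ (pure transport) can reach the boundary in the right time, and where the $2R$-widening of the time interval and the passage $\cP_R \rightsquigarrow \cP_{2R}$ are forced. The Nash piece, by contrast, is essentially a black-box application of \Cref{t.ws_Nash} once the set inclusion $\Omega_1\circ(\delta_s B)^{-1}\subset\Omega_2$ is checked, and the cutoff commutator bookkeeping is routine though tedious.
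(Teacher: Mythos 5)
Your proposal takes the same route as the paper, which simply states that the lemma is a direct consequence of \Cref{p.poincare,p.Nash,p.outgoing} combined with Young's inequality. The partition-of-unity layer you introduce, however, is redundant: since $\cP_R \cup \cO_R \cup \cN_R$ exactly partitions $\H\times\R^d$ (by the definition~\eqref{e.regions}), you can write $\int_{T_1}^{T_2}\int g^2 \dz = \|g\|_{L^2([T_1,T_2]\times\cP_R)}^2 + \|g\|_{L^2([T_1,T_2]\times\cO_R)}^2 + \|g\|_{L^2([T_1,T_2]\times\cN_R)}^2$ and apply the three propositions directly to $g$ — no cutoffs $\chi_\bullet$ at this level, hence none of the commutator bookkeeping for $\lbr \chi_\bullet g\rbr_{H^1_\kin}$ (all of that is already absorbed inside the proofs of the propositions themselves). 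Your use of Young's inequality to convert each cross term $\sqrt R\,\lbr g\rbr_{H^1_\kin}\|g\|_{L^2}$ into $\tfrac{R}{\delta}\lbr g\rbr_{H^1_\kin}^2 + \delta\|g\|_{L^2}^2$ and to identify the $\delta$-piece with the subtracted term on the left is exactly the remaining step.
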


The first two terms on the right hand side are exactly as we would expect for the energy equality~\eqref{e.energy} associated to solutions of~\eqref{e.kfp}.  The last term, however, is not what we desire because it does not include the steady solution to the adjoint equation $\tilde \phi$.  On a portion of $\cN_R$, we can ``sneak in'' a factor of $\tilde \phi/ R^{\sfrac14}$.  Indeed, using~\eqref{e.phi_asymp}, we can deduce the following lemma, whose proof is in \Cref{s.technical}:
\begin{Lemma}\label{l.cN_R}
	Fix $R>0$.  If $(x,v) \in \cN_R$ and $x_1 \geq - v_1^3$, then
\be
	\tilde \phi(x,v) \gtrsim R^{\sfrac14}.
\ee
\end{Lemma}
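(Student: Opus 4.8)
\textbf{Proof plan for \Cref{l.cN_R}.}
The plan is to unwind the two hypotheses into concrete constraints on $(x_1,v_1)$ and then read off the conclusion from the asymptotics~\eqref{e.phi_asymp} for $\tilde\phi(x,v) = \phi(x_1,-v_1)$. First I would translate the membership $(x,v)\in\cN_R$, via the scaling identity $\cN_R = \delta_{1/\sqrt R}N_1$ in~\eqref{e.domain_scaling} and the definition of $d_\kin$, into the statement that $(x,v)$ is kinetically far (distance $\gtrsim\sqrt R$) from $\R\times\partial\H\times\R^d$. Using the formula~\eqref{e.dist2} for $\dist(S,z)$ together with the explicit norm $\VERT\cdot\VERT$ in~\eqref{e.norm}, the distance from $(0,x,v)$ to the hyperplane $\{x_1=0\}$ (as a subset of $\R\times\R^{2d}$, \emph{all} velocities allowed) is comparable to the quantity obtained by minimizing $\max\{|t'|^{1/2}, |x_1 - t'w_1|^{1/3}, |v_1 - w_1|, |w_1|\}$ over $t',w_1$ subject to $x_1 + t'(v_1+\cdots) \le 0$; the upshot is the two scalar conditions $x_1^{1/3}\gtrsim\sqrt R$ and $|v_1|\gtrsim\sqrt R$, i.e. $x_1\gtrsim R^{3/2}$ and $|v_1|\gtrsim\sqrt R$. (I expect this distance computation — making rigorous that ``kinetic distance to $\{x_1=0\}$ is comparable to $\min\{x_1^{1/3},|v_1|\}$'' up to the transport correction — to be the main technical point, though it is a routine manipulation of~\eqref{e.norm}–\eqref{e.dist2} and is probably recorded elsewhere in \Cref{s.technical}.)

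Next I would case on the sign of $v_1$. If $v_1\ge 0$, then the hypothesis $x_1\ge -v_1^3$ is vacuous and we simply use $x_1\gtrsim R^{3/2}$ together with the middle branch of~\eqref{e.phi_asymp}: we need $x_1\ge |v_1|^3$, which will follow once we observe that the first branch (where $0\le x_1\le v_1^3$) forces $\tilde\phi$ to be exponentially small only when $v_1^3\gg x_1$, and in that regime a direct estimate $\frac{x_1}{v_1^{5/2}}e^{-v_1^3/(9x_1)}$ combined with $x_1\gtrsim R^{3/2}$ still gives $\gtrsim R^{1/4}$ after optimizing — but in fact the cleanest route is: in all of $\cN_R$ with $v_1\ge0$ we have $x_1^{1/6}\gtrsim R^{1/4}$ directly from $x_1\gtrsim R^{3/2}$, and one checks $\tilde\phi\gtrsim x_1^{1/6}$ throughout $\{x_1>0\}\cap\{v_1\ge0\}$ using~\eqref{e.phi_asymp} (the exponential branch only makes $\tilde\phi$ smaller than $x_1^{1/6}$ by a factor that is itself bounded below when $x_1\gtrsim R^{3/2}$, since there $x_1\ge v_1^3$ is forced). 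If $v_1<0$, the hypothesis $x_1\ge -v_1^3 = |v_1|^3$ lands us squarely in the second branch $x_1\ge|v_1|^3$ of~\eqref{e.phi_asymp}, so $\tilde\phi(x,v)\approx x_1^{1/6}\gtrsim (R^{3/2})^{1/6} = R^{1/4}$, which is exactly the claim.

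So the argument reduces to: (i) the distance/scaling bookkeeping giving $x_1\gtrsim R^{3/2}$ (and $|v_1|\gtrsim\sqrt R$, which we do not even need for the lower bound) from $(x,v)\in\cN_R$; (ii) confirming that the hypothesis $x_1\ge -v_1^3$ places us in (or above) the $x_1\ge|v_1|^3$ regime of~\eqref{e.phi_asymp} where $\tilde\phi\approx x_1^{1/6}$ — trivially when $v_1\le0$, and when $v_1>0$ because $x_1\gtrsim R^{3/2}$ together with $|v_1|\gtrsim\sqrt R$ still allows $v_1$ large, so one must either invoke the first-branch formula and check $\frac{x_1}{v_1^{5/2}}e^{-v_1^3/(9x_1)}\gtrsim R^{1/4}$ using only $x_1\gtrsim\max\{R^{3/2}, v_1^{1/2}\cdot(\text{stuff})\}$, or, more simply, observe that $v_1\ge0$ is not actually constrained to be small and instead split at $x_1 = v_1^3$ and handle $x_1\le v_1^3$ by the explicit exponential; and (iii) the elementary chain $x_1^{1/6}\gtrsim R^{1/4}$. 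The only place requiring genuine care is handling $v_1>0$ with $x_1<v_1^3$; there I would substitute $x_1\ge cR^{3/2}$ into $\frac{x_1}{v_1^{5/2}}e^{-v_1^3/(9x_1)}$, write $v_1^3/x_1 = s$, note $x_1<v_1^3$ means $s>1$, and bound $\frac{x_1}{v_1^{5/2}} = x_1^{1/6}\cdot (x_1/v_1^3)^{5/6} = x_1^{1/6} s^{-5/6}$, so $\tilde\phi\approx x_1^{1/6} s^{-5/6}e^{-s/9}\gtrsim x_1^{1/6}\gtrsim R^{1/4}$ since $s^{-5/6}e^{-s/9}$ is bounded below on... wait, it is \emph{not} bounded below as $s\to\infty$ — so this sub-case genuinely needs the extra input that $\cN_R$ (being far from the grazing set) also forbids $x_1\ll v_1^3$ with $v_1$ large, i.e. the distance bound gives more than $x_1\gtrsim R^{3/2}$ alone. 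I would therefore revisit step (i) to extract the sharper consequence of $(x,v)\in\cN_R$ in the regime $v_1>0$ — namely that being distance $\gtrsim\sqrt R$ from $\gamma_-=\{x_1=0,v_1>0\}$ in the kinetic metric, where transport \emph{helps} reach the boundary for $v_1>0$ only after first reversing $v_1$, pins $x_1\gtrsim R^{3/2}$ relative to $v_1$ in the right way — and this refined bookkeeping is the true crux of the lemma.
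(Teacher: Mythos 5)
The central flaw is a branch mis-assignment in the asymptotics of $\tilde\phi$, and it is exactly what causes the dead end you reach at the end. Recall $\tilde\phi(x,v)=\phi(x_1,-v_1)$. The exponential branch of~\eqref{e.phi_asymp}, $\phi\approx \tfrac{x_1}{v_1^{5/2}}e^{-v_1^3/(9x_1)}$, is the regime $0\le x_1\le v_1^3$, i.e.\ $v_1>0$; under the substitution $v_1\mapsto-v_1$ it becomes the regime $v_1<0$, $0\le x_1\le |v_1|^3$ for $\tilde\phi$. That is precisely the set which the lemma's hypothesis $x_1\ge -v_1^3$ \emph{excludes}. By contrast, for $v_1>0$ and $0<x_1<v_1^3$ — the case you agonize over — the correct branch for $\tilde\phi$ is the third one: $\tilde\phi(x,v)=\phi(x_1,-v_1)\approx\sqrt{|{-v_1}|}=\sqrt{v_1}$. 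No exponential smallness occurs there at all, so the obstruction you identify (``$s^{-5/6}e^{-s/9}$ is not bounded below as $s\to\infty$'') is a phantom. Once you put yourself in the $\sqrt{v_1}$ branch, this sub-case is the easiest one: $(x,v)\in\cN_R$ gives the pure-transport bound $x_1\ge \tfrac{R}{10}|v_1|$ (\Cref{l.transport}), and combined with $x_1<v_1^3$ this forces $v_1^2>\tfrac{R}{10}$, whence $\tilde\phi\approx\sqrt{v_1}\gtrsim R^{1/4}$. That is essentially the paper's argument for the large-$|v_1|$ case; the small-$|v_1|$ case uses a direct distance computation to extract $x_1\gtrsim R^{3/2}$, after which $x_1\ge|v_1|^3$ holds and $\tilde\phi\approx x_1^{1/6}\gtrsim R^{1/4}$.

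Two smaller issues in your step (i). First, $(x,v)\in\cN_R$ does \emph{not} imply $|v_1|\gtrsim\sqrt R$ — the point $(x_1,v_1)=(10R^{3/2},0)$ is in $\cN_R$ — though you correctly note you never use this. Second, you assert $x_1\gtrsim R^{3/2}$ throughout $\cN_R$ without justification; it is true, but it requires combining the transport bound (which only helps when $|v_1|\gtrsim\sqrt R$) with the direct position-shift bound (for small $|v_1|$), which is exactly the case split the paper makes, so you cannot skip it as ``routine bookkeeping.'' Finally, the side remark that $\tilde\phi\gtrsim x_1^{1/6}$ holds throughout $\{x_1>0,\,v_1\ge 0\}$ is correct, but the reason you give (that the exponential factor is ``bounded below'') is not — the correct reason is that in the region $0<x_1<v_1^3$ one has $\tilde\phi\approx\sqrt{v_1}>x_1^{1/6}$. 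In short: sort out the reflection $v_1\mapsto -v_1$ in~\eqref{e.phi_asymp}, notice that the hypothesis $x_1\ge -v_1^3$ is there precisely to exclude the one genuinely exponentially-small branch of $\tilde\phi$, and the proof closes along the lines you intended.
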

Thus, on this subdomain, we can always replace the $L^1$-norm of $g$ with $R^{-\sfrac14}\|g \phi^*\|_{L^1}$, which is a quantity conserved by the equation~\eqref{e.kfp}.

On the other hand, when $x \leq - v^3$, we have no lower bound on $\phi^*$ and can not appeal to $\|f\phi^*\|_{L^1}$.  This region is ``too far'' from the boundary to be influenced by it on a time-scale $t= O(R)$.  Thus, we have the following estimate that quantifies how isolated it is.

\begin{Lemma}\label{l.isolated_estimate}
	Fix $R>0$.  Let $f$ be a solution to~\eqref{e.kfp}.  There exists a nonnegative function $\tilde \mu_R \lesssim 1$ such that
	\be\label{e.c061804}
	\begin{split}
		&\tilde \mu_R(x,v)
			= 1 \qquad \text{ if } (x,v) \in \cN_R \cap \{x_1 \leq - v_1^3\},
		\\
		&\tilde \mu_R(x,v)
			\lesssim \begin{dcases}
					0 \qquad
						&\text{ if } v_1> - \frac{1}{2}\sqrt{\tfrac{R}{10}} \text{ or }  x_1 \geq 2 |v_1|^3,\\
					e^{\frac{Rv_1}{10x_1}} \qquad
						&\text{ if }  v_1 \leq -\sqrt{\tfrac{R}{10}} \text{ and } x_1 \leq -\frac{R v_1}{10},
				\end{dcases}
			\end{split}
	\ee
	and
	\be\label{e.c061805}
			(\Delta_v + v\cdot\nabla_x) \tilde \mu_R \lesssim \frac{1}{R^{\sfrac54}} \tilde \phi.
	\ee
%	\be\label{e.isolated_region}
%		\int_{\cN_R \cap\{x \leq -v_1^3\}} f(t,x,v) \dv dx
%			-  \int \chi_R(x,v) f_0(x,v) \dv dx
%			\lesssim \frac{t}{R^{\sfrac54}} \int \tilde \phi(x,v) f_0(x,v) \dv dx.		
%	\ee
\end{Lemma}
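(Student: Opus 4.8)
The plan is to construct $\tilde \mu_R$ explicitly by patching together three elementary building blocks, one for each regime described in \eqref{e.c061804}, and then to glue them with carefully chosen cutoffs in the kinetic variables. By the $2$-$3$-$1$ scaling \eqref{e.dilation}--\eqref{e.domain_scaling}, it suffices to build $\tilde\mu_1$ on the unit scale and set $\tilde\mu_R = \tilde\mu_1 \circ (\delta_{1/\sqrt R}\,\cdot)$, tracking how the two differential operators $\Delta_v$ and $v\cdot\nabla_x$ transform under dilation: $\Delta_v$ picks up a factor $R$ and $v\cdot\nabla_x$ a factor $R$ as well (since $x\mapsto R^{3/2}x$, $v\mapsto R^{1/2}v$), while $\tilde\phi$ scales like $R^{1/6\cdot 3/2}=R^{1/4}$ on the relevant regime by \eqref{e.phi_asymp} together with \Cref{l.cN_R}; this is what will eventually produce the power $R^{-5/4}$ on the right side of \eqref{e.c061805}. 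First I would fix the three prototypes: (i) on the ``core'' $\cN_R\cap\{x_1\le -v_1^3\}$ simply $\tilde\mu_R\equiv 1$, which is annihilated by $\Delta_v + v\cdot\nabla_x$; (ii) on the ``transport tail'' $v_1\le -\sqrt{R/10}$, $x_1\le -Rv_1/10$, the exponential weight $e^{R v_1/(10 x_1)}$ — the key point here is the computation sketched right after \eqref{e.c062008}, namely
\[
	(\partial_t - v\cdot\nabla_x - \Delta_v)\, e^{\frac{\alpha t v_1}{x_1}} \le 0
\]
in that regime, so after removing the $\partial_t$ (we are using a steady weight) one checks directly that $(v\cdot\nabla_x + \Delta_v)\,e^{Rv_1/(10x_1)}\le 0$ there, with the worst terms being the $v_1\partial_{x_1}$ contribution $-\tfrac{R v_1^2}{10 x_1^2}e^{\cdots}$ (good sign, since $v_1<0$ means $v_1\partial_{x_1}$ of the exponent is $v_1\cdot(-Rv_1/(10x_1^2))<0$... let me restate: $\partial_{x_1}(Rv_1/(10x_1)) = -Rv_1/(10x_1^2)>0$, so $v_1\partial_{x_1}$ of the exponent $=-Rv_1^2/(10x_1^2)<0$) dominating the diffusive term $\Delta_v(Rv_1/(10x_1)) = 0$ plus $|\nabla_v(\text{exponent})|^2 = R^2/(100 x_1^2)$, and one needs $v_1^2 \gtrsim R$, which is exactly the hypothesis $v_1\le-\sqrt{R/10}$; and (iii) near the top boundary $x_1 \approx 2|v_1|^3$ and near $v_1\approx -\tfrac12\sqrt{R/10}$ we cut off to zero.

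The main work — and the main obstacle — is the \emph{gluing}: choosing smooth cutoffs $\chi$ localizing to each patch so that (a) the resulting $\tilde\mu_R$ is globally smooth, bounded by $1$, has the stated pointwise bounds, and (b) the commutator terms produced when $\Delta_v + v\cdot\nabla_x$ hits a cutoff are controlled by $R^{-5/4}\tilde\phi$. The dangerous region is the interface between patch (i) and patch (ii), i.e. near $x_1 \approx -v_1^3$ with $v_1\lesssim -\sqrt R$; there the derivative-of-cutoff terms are of order $R^{-1}\cdot(\text{length scale of transition})^{-1}$ or $(\text{length scale})^{-2}$, and one must verify that on this interface $\tilde\phi \gtrsim R^{1/4}$ (which is precisely \Cref{l.cN_R}, valid since $x_1 \ge -v_1^3$ holds with near-equality and $(x,v)$ can be arranged to lie in $\cN_{cR}$) so that $R^{-5/4}\tilde\phi \gtrsim R^{-1}$, giving enough room. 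A convenient device to avoid a literal partition of unity is to define $\tilde\mu_R := \min\{1,\ \chi(x,v)\,e^{Rv_1/(10x_1)}\}$ times a single outer cutoff supported in $\{v_1 < -\tfrac14\sqrt{R/10}\}\cap\{x_1 < 2|v_1|^3\}$, where $\chi$ is chosen so the minimum switches from the exponential to $1$ exactly across $x_1 \approx -v_1^3$; then \eqref{e.c061805} is checked separately on $\{\tilde\mu_R = 1\}$ (trivial), on $\{\tilde\mu_R = e^{\cdots}\}$ interior (prototype (ii) computation), and on the transition and outer-cutoff shells (commutator estimate against $R^{-5/4}\tilde\phi$). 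One should also double-check that $\min$ of two smooth supersolutions is a supersolution in the viscosity/distributional sense, which is standard, and then mollify slightly to get genuine smoothness without destroying the inequality up to a harmless constant.

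I would organize the write-up as: \textbf{Step 1}, reduce to $R=1$ by scaling and record the scaling of $\Delta_v$, $v\cdot\nabla_x$, $\tilde\phi$. \textbf{Step 2}, verify the exponential prototype $(v\cdot\nabla_x + \Delta_v)\,e^{v_1/(10x_1)}\le 0$ on $\{v_1 \le -\sqrt{1/10},\ x_1 \le -v_1/10\}$ by direct differentiation, isolating the good transport term. \textbf{Step 3}, define the cutoffs and set $\tilde\mu_1$; verify the pointwise bounds in \eqref{e.c061804} by inspection of the support and the formula. \textbf{Step 4}, verify \eqref{e.c061805} region by region, the only nontrivial estimate being on the transition shell near $x_1\approx -v_1^3$, where \Cref{l.cN_R} supplies the lower bound $\tilde\phi \gtrsim 1$ needed to absorb the $O(1)$ commutator into $\lesssim \tilde\phi$, and on the outer-cutoff shells where $x_1 \approx 2|v_1|^3$ or $v_1\approx-\tfrac14\sqrt{1/10}$ and a similar (easier, since $\tilde\phi$ is order one there too) bound applies. \textbf{Step 5}, mollify and scale back. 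The genuine difficulty is entirely in Step 4's transition-shell bookkeeping — making the transition length scale (a constant multiple of $|v_1|^3$, or equivalently of the $\cN_R$-distance scale $\sqrt R$ in the appropriate coordinate) explicit enough that the cutoff derivatives are quantitatively dominated — everything else is routine.
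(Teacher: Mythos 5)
Your ansatz — a product of an exponential weight with cutoffs, verified region by region — is essentially the paper's strategy, and your computation that $(v\cdot\nabla_x + \Delta_v)\,e^{Rv_1/(10x_1)} = \tfrac{R}{10 x_1^2}\bigl(\tfrac{R}{10} - v_1^2\bigr)e^{Rv_1/(10x_1)} \le 0$ under $v_1^2 \ge R/10$ is the right prototype calculation. But you misidentify where the real difficulty lies, and the region you dismiss as ``easier, since $\tilde\phi$ is order one there too'' is actually the hard one.

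The problematic corner is $|v_1| \approx \sqrt{R/10}$ combined with $x_1 \lesssim R |v_1|/10 \approx (R/10)^{3/2}$ — the overlap of the $v_1$-cutoff shell with the exponentially decaying part of the weight. There $x_1 < |v_1|^3$, so by \eqref{e.phi_asymp} (and recalling $\tilde\phi(x,v) = \phi(x_1,-v_1)$ with $-v_1 > 0$) one is in the regime $\tilde\phi \approx \tfrac{x_1}{|v_1|^{5/2}}\,e^{v_1^3/(9x_1)}$, which for $x_1 \to 0$ is exponentially small, not order $R^{1/4}$. The commutator terms (cutoff derivatives hitting the exponential) are also exponentially small, of order $R^{-1}\,e^{Rv_1/(10x_1)}$, and the whole estimate hinges on comparing the two exponential rates: the key inequality is $e^{Rv_1/(10x_1)} \le e^{v_1^3/x_1}$, equivalent to $Rv_1/10 \le v_1^3$, i.e.\ $v_1^2 \le R/10$, which is exactly what the $v_1$-cutoff localization provides. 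Without this matching of exponents the commutator cannot be absorbed into $R^{-5/4}\tilde\phi$; this is a genuine computation, not bookkeeping. Your proposal, by contrast, places the difficulty on the transition near $x_1 \approx -v_1^3$, which is in fact the benign side: there $x_1 \gtrsim |v_1|^3 \gtrsim R^{3/2}$, so $\tilde\phi \approx x_1^{1/6}\gtrsim R^{1/4}$, and the commutator (of size $\approx |v_1|^{-2}$) is absorbed trivially.

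Two smaller points. First, \Cref{l.cN_R} requires both $(x,v) \in \cN_R$ and $x_1 \ge -v_1^3$; the cutoff shells do not all lie in $\cN_R$, and on the hard corner above $x_1 < -v_1^3$, so the lemma does not apply — one must use the asymptotics \eqref{e.phi_asymp} directly, as the paper does. Second, your global rescaling to $R=1$ is awkward here precisely because $\tilde\phi$ does not obey the 2-3-1 scaling in the exponentially small regime, so the reduction does not uniformly convert $\tilde\phi \gtrsim R^{1/4}$ into $\tilde\phi_1 \gtrsim 1$; it is cleaner to keep $R$ explicit throughout, as the paper does, and track the $R$-dependence term by term.
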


%Let us note that the main step of \Cref{l.isolated_estimate} is the construction of the cutoff function.  
As discussed in the introduction, the construction of this cutoff-type function requires some care as it has to encode the physics of the situation -- particles in the $x \approx - R v_1$ region will exit the region $\cN_R \cap \{x_1 \leq -v_1^3\}$ in $R$ units of time.  That said, the proof of \Cref{l.isolated_estimate} is rather tedious, so we relegate it to \Cref{s.zeta}.

Before continuing on, let us note that the $\sfrac{R}{10}$ is somewhat arbitrary.  It comes from the $\sfrac{R}{10}$ taken in the definition of $\cN_R$, which is mainly taken for convenience.  This can certainly be improved, although it is not clear exactly what the optimal exponential decay rate is.

We now combine all estimate into one that will be the main functional inequality in the proof of \Cref{t.main}.

\begin{Proposition}\label{p.combined_estimate}
	Fix $T_1,T_2, R, \delta >0$ with $T_2 > T_1 > 2R$.  Suppose that $f\in H^1_{\kin,0}\left( [0,T_2]\times \H \times \R^d\right)$ solves~\eqref{e.kfp}.  Then
	\be
		\begin{split}
			\int_{T_1}^{T_2} \int &f(z)^2 \dz
				- \delta \int_{T_1 - 2R}^{T_2} \int f(z)^2 \dz
				\\&\lesssim \frac{R}{\delta} \lbr f\rbr_{H^1_\kin([T_1-2R,T_2+R]\times \H \times \R^d)}^2
						+ R \int_{T_1}^{T_2+R} \int_{\R^{d-1}\times \tilde \H} |v_1| f(t,(0, \overline x),v)^2 \, d \overline x \dv dt
						\\&\qquad
						+ \Big(\frac{(T_2 - T_1)^4}{R^{2d + \sfrac72}}+\frac{1}{R^{2d - \sfrac12}}\Big) \Big( \int f_\init \tilde \phi \dx\dv\Big)^2
%						\\&\qquad
						+ \frac{(T_2-T_1)^2}{R^{2d+1}} \Big(\int f_\init  \tilde \mu_R \dx\dv\Big)^2.
		\end{split}
	\ee
\end{Proposition}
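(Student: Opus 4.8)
The plan is to start from \Cref{l.functional} applied to $g=f$ and then massage the $L^1$-term $\|f\|_{L^1([T_1-2R,T_2]\times\cN_{R/2})}^2$ into the two conserved quantities appearing in the statement. First, I split $\cN_{R/2}$ according to the dichotomy in \Cref{l.cN_R} and \Cref{l.isolated_estimate}: write $\cN_{R/2} = (\cN_{R/2}\cap\{x_1\geq -v_1^3\}) \cup (\cN_{R/2}\cap\{x_1\leq -v_1^3\})$ (up to the overlap on $\{x_1 = -v_1^3\}$, which has measure zero), and correspondingly bound
\[
	\|f\|_{L^1([T_1-2R,T_2]\times\cN_{R/2})}
		\lesssim \|f\|_{L^1([T_1-2R,T_2]\times\cN_{R/2}\cap\{x_1\geq -v_1^3\})}
			+ \|f\|_{L^1([T_1-2R,T_2]\times\cN_{R/2}\cap\{x_1\leq -v_1^3\})}.
\]
On the first piece, \Cref{l.cN_R} gives $\tilde\phi\gtrsim R^{1/4}$ (here I should be slightly careful that the lemma is stated for $\cN_R$, but $\cN_{R/2}\supset$ is the relevant containment — actually one wants $\cN_{R/2}$ points to satisfy the hypothesis, so one applies \Cref{l.cN_R} with $R$ replaced by $R/2$, changing only the implied constant), so
\[
	\int_{T_1-2R}^{T_2}\!\!\int_{\cN_{R/2}\cap\{x_1\geq -v_1^3\}} f \dx\dv\dt
		\lesssim \frac{1}{R^{1/4}} \int_{T_1-2R}^{T_2}\!\!\int f(t,x,v)\,\tilde\phi(x,v) \dx\dv\dt
		= \frac{T_2-T_1+2R}{R^{1/4}} \int f_\init\,\tilde\phi \dx\dv,
\]
using conservation of $\int f\tilde\phi$ from~\eqref{e.tilde_phi_conserved}. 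Squaring, and using $(T_2-T_1+2R)^2 \lesssim (T_2-T_1)^2 + R^2$, the prefactor $\tfrac{1}{R^{2d+1}}$ from \Cref{l.functional} produces exactly a term $\lesssim \big(\tfrac{(T_2-T_1)^2}{R^{2d+3/2}} + \tfrac{1}{R^{2d-1/2}}\big)\big(\int f_\init\tilde\phi\big)^2$; since $T_2-T_1 \lesssim$ (no a priori bound) I keep it, and note $(T_2-T_1)^2/R^{2d+3/2} \lesssim (T_2-T_1)^4/R^{2d+7/2} + 1/R^{2d-1/2}$ by Young/AM--GM in the scale-invariant variable $(T_2-T_1)/R$, which matches the stated first parenthesis.

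The second, genuinely kinetic piece is handled with \Cref{l.isolated_estimate}. On $\cN_{R/2}\cap\{x_1\leq -v_1^3\}$ (again applying the lemma at scale $R/2$) we have $\tilde\mu_{R/2}\equiv 1$, so the $L^1$-mass there is bounded by $\int f\,\tilde\mu_{R/2}$. The point is that, unlike $\tilde\phi$, $\tilde\mu_R$ is \emph{not} exactly conserved, but~\eqref{e.c061805} says it is an approximate supersolution of the adjoint equation: pairing~\eqref{e.kfp} against $\tilde\mu_{R}$ (more precisely, the adjoint computation as in~\eqref{e.tilde_phi_conserved}) gives
\[
	\frac{d}{dt}\int f\,\tilde\mu_R \dx\dv
		= \int f\,(\Delta_v + v\cdot\nabla_x)\tilde\mu_R \dx\dv
		\lesssim \frac{1}{R^{5/4}} \int f\,\tilde\phi \dx\dv
		= \frac{1}{R^{5/4}} \int f_\init\,\tilde\phi \dx\dv,
\]
plus a boundary contribution on $\gamma_-$ (resp.\ $\gamma_+$) which vanishes or has a good sign because $f=0$ on $\gamma_-$ and $\tilde\mu_R$ is supported away from $\gamma_+$ by the first line of~\eqref{e.c061804} — this sign/support check is a small technical point to verify. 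Integrating in time from $0$ to $t\le T_2$,
\[
	\int f(t)\,\tilde\mu_R \dx\dv
		\lesssim \int f_\init\,\tilde\mu_R \dx\dv + \frac{T_2}{R^{5/4}}\int f_\init\,\tilde\phi \dx\dv.
\]
Integrating once more over $t\in[T_1-2R,T_2]$ and squaring, then multiplying by $\tfrac{1}{R^{2d+1}}$, yields $\lesssim \tfrac{(T_2-T_1)^2}{R^{2d+1}}\big(\int f_\init\tilde\mu_R\big)^2 + \tfrac{(T_2-T_1)^2\,T_2^2}{R^{2d+7/2}}\big(\int f_\init\tilde\phi\big)^2$; absorbing $T_2\le$ (using $T_1>2R$, so $T_2 = (T_2-T_1)+T_1$ and $T_1$ is only bounded below) — here one uses $T_2-T_1 \lesssim T_2$ and crude bounds to fold the cross term into the already-present $(T_2-T_1)^4/R^{2d+7/2}$ piece, matching the statement. (I would double-check the exact bookkeeping of powers of $T_2$ versus $T_2-T_1$; the cleanest route is to track everything in the dimensionless ratio $\rho = (T_2-T_1)/R$ and note $T_1\ge 2R$ lets one replace stray $T_2$'s by $O(T_2-T_1) + O(R)$ only when $T_2-T_1\gtrsim R$, handling $T_2-T_1\lesssim R$ trivially since then the $1/R^{2d-1/2}$ term dominates.)

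Finally, assembling: \Cref{l.functional} with $g=f$ gives the $\tfrac{R}{\delta}\lbr f\rbr_{H^1_\kin}^2$ term and the $\gamma_+$ boundary term verbatim, and the two $L^1$-estimates above convert its last term into the two conserved-quantity terms in the Proposition. I expect the only real obstacle to be the power-counting cleanup in the previous two paragraphs — making sure the stray factors of $T_2$ (as opposed to $T_2-T_1$) coming from integrating the non-conserved quantity $\int f\tilde\mu_R$ really do get absorbed into $\tfrac{(T_2-T_1)^4}{R^{2d+7/2}}\big(\int f_\init\tilde\phi\big)^2 + \tfrac{1}{R^{2d-1/2}}\big(\int f_\init\tilde\phi\big)^2$ using $T_1\ge 2R$, together with verifying the boundary terms in the $\tilde\mu_R$ energy identity have the right sign. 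Everything else is a direct substitution into \Cref{l.functional}, \Cref{l.cN_R}, and \Cref{l.isolated_estimate}.
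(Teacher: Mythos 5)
Your proposal is correct and follows the paper's proof of \Cref{p.combined_estimate} essentially verbatim: decompose $\cN_{\sfrac{R}{2}}$ into the weighted region (where \Cref{l.cN_R} gives $\tilde\phi \gtrsim R^{\sfrac14}$, and the conserved quantity $\int f\tilde\phi$ controls the $L^1$ mass) and the isolated region (where \Cref{l.isolated_estimate} gives the approximate supersolution $\tilde\mu_R$, whose pairing $\int f\tilde\mu_R$ grows at most linearly in $t$ with slope $\tfrac{1}{R^{\sfrac54}}\int f_\init\tilde\phi$), then substitute into the last term of \Cref{l.functional}. The $T_2$-versus-$T_2-T_1$ bookkeeping you flag is present at exactly the same level of precision in the paper's own \eqref{e.c062009}, which asserts $\int_{T_1-2R}^{T_2} t\,\dt \lesssim (T_2-T_1)^2$ without comment, and the boundary-term check in the $\tilde\mu_R$ identity is as you suspect (vacuous because $\tilde\mu_R$ vanishes near $\partial\H$ by construction).
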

\begin{proof}
For convenience, let us write
\be
	\cN_{\sfrac{R}{2}}
		= \cW_{\sfrac{R}{2}} \cup \cI_{\sfrac{R}{2}},
\ee
where
\be
	\cI_{\sfrac{R}{2}} = \{(x,v) \in \cN_{\sfrac{R}{2}} : \ v_1 \leq 0, \ x_1 \leq - v_1^3\}
	\quad\text{ and }\quad
	\cW_{\sfrac{R}{2}} = \cN_{\sfrac{R}{2}} \setminus \cI_{\sfrac{R}{2}}.
\ee
Here $\cI_{\sfrac{R}{2}}$ is the ``isolated region,'' where the effects of the boundary have not ``yet'' been felt.  In this region, we use \Cref{l.isolated_estimate}.  Its complement, $\cW_{\sfrac{R}{2}}$, is the ``weighted region'', where $\tilde \phi$ can be included directly in the integral via \Cref{l.cN_R}.

First, we note that, by \Cref{l.isolated_estimate},
\be
	\begin{split}
		\frac{d}{dt} \int f(t,x,v) \tilde\mu_R(x,v) \dx\dv
			&= \int \left[\left( \Delta_v - v\cdot\nabla_x\right) f(t,x,v)\right] \tilde \mu_R \dx\dv
			\\&
			= \int  f(t,x,v) \left( \Delta_v + v\cdot\nabla_x\right) \tilde \mu_R \dx\dv
			\\&
			\lesssim \frac{1}{R^{\sfrac54}}\int  f(t,x,v) \tilde \phi(x,v) \dx\dv
			= \frac{1}{R^{\sfrac54}}\int  f_\init (x,v) \tilde \phi(x,v) \dx\dv.
	\end{split}
\ee
The last equality holds by~\eqref{e.tilde_phi_conserved}.  We deduce that
\be
	\begin{split}
		\int_{\cI_{\sfrac{R}{2}}} f(t,x,v) \dx\dv
			&\leq \int f(t,x,v) \tilde \mu_R(x,v)  \dx\dv
			\\&
			\lesssim \frac{t}{R^{\sfrac54}} \int f_\init \tilde \phi \dx\dv
				+ \int f_\init (x,v) \tilde \mu_R(x,v)  \dx\dv.
	\end{split}
\ee
Next, we use \Cref{l.cN_R} to find
\be
	\int_{\cW_{\sfrac{R}{2}}} f(t,x,v) \dx\dv
		\lesssim \frac{1}{R^{\sfrac14}}\int_{\cW_{\sfrac{R}{2}}} f(t,x,v) \tilde \phi \dx\dv
		= \frac{1}{R^{\sfrac14}}\int_{\cW_{\sfrac{R}{2}}} f_\init \tilde \phi \dx\dv.
\ee
In total, we deduce that
\be\label{e.c062009}
	\begin{split}
		\|f\|_{L^1([T_1-2R,T_2]\times \cN_{\sfrac{R}{2}})}
			\lesssim &\frac{(T_2 - T_1)^2}{R^{\sfrac54}} \int f_\init \tilde \phi \dx\dv
					+ (T_2-T_1) \int f_\init (x,v) \tilde \mu_R(x,v)  \dx\dv
					\\&
					+ R^{\sfrac34} \int f_\init \tilde \phi \dx\dv.
	\end{split}
\ee
The combination of this inequality with \Cref{l.functional} completes the proof.
\end{proof}

\subsection{Proof of the main result: \texorpdfstring{\Cref{t.main}}{Theorem \ref{t.main}}}

\begin{proof}
The proof takes several steps.  All but the last aim for a weighted $L^1\to L^2$ type estimate.  The last step bootstraps that to a weighted $L^2 \to L^\infty$ type estimate.

\medskip
\noindent{\bf \# Step one: setting notation.} 
For ease, let us denote the ``energy'' and ``dissipation'' as
\be\label{e.definitionED}
	E(t) = \int f(t,x,v)^2 \dx\dv
	\quad\tand\quad
	D(t) = \int |\nabla_v f(t,x,v)|^2 \dx\dv.
\ee
Although physically it is not correct to call $E$ the energy, we abuse terminology and do so in analogy with work for parabolic equations.  It is also useful to set notation for the boundary term
\be
	B(t) = \int_{\gamma_+} |v_1| f(t,(0,\overline x), v) \, d\overline x \dv.
\ee

Applying~\eqref{e.energy} yields, for any nonnegative $t_1 < t_2$,
\be\label{e.c061902}
	E(t_2) + \int_{t_1}^{t_2} \left( D(s) + B(s)\right) \ds
		\leq E(t_1).
\ee
We see that $E$ is decreasing.

\medskip
\noindent{\bf \# Step two: applying the weighted Nash inequality \Cref{p.combined_estimate}.} 
With $\delta, \epsilon \in (0,\sfrac1{100})$ be to chosen, we let
\be
	R = \eps t,
	\quad
	T_1 -2R = \sfrac{t}{2},
	\quad\tand\quad
	T_2 + R = t.
\ee
Then, in the notation above and in view of the correspondence~\eqref{e.H1_kin_f} between $D$ and the $H^1_\kin$-norm,  \Cref{p.combined_estimate} yields
\be
	\begin{split}
		\int_{\frac{1+4\eps}{2} t}^{(1-\eps) t} &E(s) \ds
			- \delta \int_{\sfrac{t}{2}}^{(1-\eps) t} E(s) \ds
			\\&
			\lesssim \frac{\eps t}{\delta} \int_{\sfrac{t}{2}}^t \left(D(s) + B(s)\right) \ds
				+ \frac{1}{\eps^{2d+\sfrac72} t^{2d-\sfrac12}} \Big( \int f_\init \tilde \phi \dx\dv\Big)^2
			\\&\qquad
				+ \frac{1}{\eps^{2d+1} t^{2d-1}} \Big(\int f_\init \tilde \mu_{\eps t} \dx\dv\Big)^2.
	\end{split}	
\ee
Combining this with~\eqref{e.c061902}, we see that
\be\label{e.c061903}
	\begin{split}
		\int_{\frac{1+4\eps}{2} t}^{(1-\eps) t} &E(s) \ds
			- \delta \int_{\sfrac{t}{2}}^{(1-\eps) t} E(s) \ds
			\\&
			\lesssim \frac{\eps t}{\delta} E(\sfrac{t}{2})
				+ \frac{1}{\eps^{2d+\sfrac72} t^{2d-\sfrac12}} \Big( \int f_\init \tilde \phi \dx\dv\Big)^2
%			\\&\qquad
				+ \frac{1}{\eps^{2d+1} t^{2d-1}} \Big(\int f_\init \tilde \mu_{\eps t} \dx\dv\Big)^2.
	\end{split}	
\ee

\medskip
\noindent{\bf \# Step three: setting up a ``first touching'' argument.}
Fix $\bar\alpha, \overline \beta \gg 1$ be constants to be chosen, and let
\be\label{e.c070702}
	\alpha = \overline \alpha \Big( \int f_\init \tilde \phi \dx\dv\Big)^2
	\quad\tand\quad
	\beta = \overline \beta \Big(\int f_\init \tilde \mu_{\eps t_0} \dx\dv\Big)^2.
\ee
Define
\be
	t_0 = \sup \Big\{ t : E(s) \leq \frac{\alpha}{t^{2d+\sfrac12}} + \frac{\beta}{t^{2d}}\Big\}.
\ee
Up to approximation, we may assume that $f_\init$ is smooth and compactly supported, whence $t_0 > 0$.  Our goal is to show that $t_0 = \infty$.  Hence, we argue by contradiction assuming that $t_0$ is finite.

Let us note that, we immediately have, from the definition of $t_0$,
\be\label{e.c070701}
	E(s) \lesssim \frac{\alpha}{t^{2d+\sfrac12}} + \frac{\beta}{t^{2d}}
		\qquad\text{ for all } s\in [\sfrac{t_0}{4},t_0].
\ee
We use this frequently in the sequel.

\medskip
\noindent{\bf\# Step four: obtaining a contradiction to the definition of $t_0$.} 
Moving the negative integral term on the left hand side of~\eqref{e.c061903} to the right hand side and applying the definition~\eqref{e.c070701} of $t_0$, we deduce that
\be\label{e.c061904}
	\begin{split}
		t_0 E\left((1-\eps) t_0\right)
			&
			\lesssim \frac{\eps t}{\delta} E(\sfrac{t}{2})
				+ \delta \int_{\sfrac{t}{2}}^{(1-\eps)t} E(s) \ds
				+ \frac{1}{\eps^{2d+\sfrac72} t^{2d-\sfrac12}} \Big( \int f_\init \tilde \phi \dx\dv\Big)^2
%			\\&\qquad
				\\&\qquad
				+ \frac{1}{\eps^{2d+1} t^{2d-1}} \Big(\int f_\init \tilde \mu_{\eps t} \dx\dv\Big)^2
			\\&\lesssim
				\left(\frac{\eps}{\delta} + \delta\right)\frac{\alpha}{t_0^{2d-\sfrac12}}
				+ \left(\frac{\eps}{\delta} + \delta \right)\frac{\beta}{t_0^{2d-1}}
			\\&\qquad
				+ \frac{1}{\eps^{2d+\sfrac72} t_0^{2d-\sfrac12}} \Big( \int f_\init \tilde \phi \dx\dv\Big)^2
				+ \frac{1}{\eps^{2d+1} t_0^{2d-1}} \Big(\int f_\init \tilde \mu_{\eps t_0} \dx\dv\Big)^2.
	\end{split}
\ee
Recalling the definition~\eqref{e.c070702} of $\alpha$ and $\beta$, we find
\be\label{e.c061904a}
	\begin{split}
		E\left((1-\eps) t_0\right)
			&\leq  \frac{\alpha}{t_0^{2d+\sfrac12}}
				\left(\frac{\eps}{\delta} +\delta
					+ \frac{1}{\overline \alpha \eps^{2d+\sfrac72}}
				\right)
			+ \frac{\beta}{t_0^{2d}}
				\left(\frac{\eps}{\delta} + \delta
					+ \frac{1}{\overline \beta \eps^{2d+1}}
				\right).
	\end{split}
\ee
Recalling again that $E$ is decreasing, we have
\be
	E((1-\eps) t_0)
		\geq E(t_0)
		= \frac{\alpha}{t_0^{2d+\sfrac12}}
			+ \frac{\beta}{t_0^{2d}}.
\ee
In summary,
\be
	\frac{\alpha}{t_0^{2d+\sfrac12}}
			+ \frac{\beta}{t_0^{2d}}
			\leq \frac{\alpha}{t_0^{2d+\sfrac12}}
				\left(\frac\eps\delta +\delta
					+ \frac{1}{\overline \alpha \eps^{2d+\sfrac72}}
				\right)
			+ \frac{\beta}{t_0^{2d}}
				\left(\frac\eps\delta +\delta)
					+ \frac{1}{\overline \beta \eps^{2d+1}}
				\right).
\ee
Choosing $\eps$ small, then $\delta$ small (depending on $\eps$), and then choosing $\bar\alpha$ and $\overline \beta$ large (depending on both $\eps$ and $\delta$, we obtain a contradiction.

It follows that $t_0 = \infty$ and, hence, for all $t>0$,
\be\label{e.c061905}
	E(t)
		\leq \frac{\bar\alpha}{t^{2d+\sfrac12}} \Big( \int f_\init \tilde \phi \dx\dv\Big)^2
			+ \frac{\bar\beta}{t^{2d}} \Big( \int f_\init \tilde \mu_{\eps t} \dx\dv\Big)^2.
\ee

\medskip
\noindent
{\bf \# Step five: some functional analysis and the conclusion.}
The inequality~\eqref{e.c061905} implies that the solution operator of~\eqref{e.kfp}
\be
	S_t: X_t \to L^2(\H \times \R^d)
\ee
is well-defined and bounded.  In other words, $S_t f_\init = f(t)$ if $f_\init \in X_t$.  Here, we define the Banach space
\be
	X_t
		= L^1_{\tilde \phi} \cap \left(t^{-\sfrac14}L^1_{\tilde \mu_{\eps t}}\right)
		= \Big\{ h \in L^1_{\rm loc}(\H \times \R^d)
			: \int |h| (\tilde \phi + \tilde \mu_{\eps t}) \dx \dv < \infty\Big\}
\ee
with the norm
\be
		\|h\|_{X_t}
			= \int  |h| \tilde \phi \dx \dv
				+ t^{\sfrac14} \int |h| \tilde \mu_{\eps t} \dx \dv.
\ee
Hence,~\eqref{e.c061905} translates to the bound
\be\label{e.c061906}
	\|S_t\|_{X_t \to L^2} \lesssim \frac{1}{t^{d+\sfrac14}}.
\ee

By using the fact that $\tilde f$ solves~\eqref{e.akfp} if $f$ solves~\eqref{e.kfp}, we also obtain the bound
\be
	\|\tilde S_t\|_{\tilde X_t \to L^2} \lesssim \frac{1}{t^{d+\sfrac14}}.
\ee
where 
\be
	\tilde S_t: \tilde X_t \to L^2(\H \times \R^d)
\ee
is the solution operator of~\eqref{e.akfp} and
\be
	\tilde X_t
		= L^1_{\phi} \cap \left(t^{-\sfrac14}L^1_{\mu_{\eps t}}\right)
		= \Big\{ h \in L^1_{\rm loc}(\H \times \R^d)
			: \int |h| (\phi + \mu_{\eps t}) \dx \dv < \infty\Big\}
\ee
with the norm
\be
		\|h\|_{\tilde X_t}
			= \int  |h| \phi \dx \dv
				+ t^{\sfrac14} \int |h| \mu_{\eps t} \dx \dv.
\ee
Let us note that since~\eqref{e.kfp} and~\eqref{e.akfp} are adjoint to one another,
\be
	\tilde S_t^* : L^2 \to X_t^*
\ee
is also a solution operator to~\eqref{e.kfp}.  By standard results on adjoint operators, we deduce that
\be\label{e.c061907}
	\|\tilde S_t^*\|_{L^2 \to \tilde X_t^*}
		= \|\tilde S_t\|_{\tilde X_t \to L^2}
		\lesssim \frac{1}{t^{d+\sfrac14}}.
\ee
It is easy to identify $\tilde X_t^*$ as
\be
	\tilde X_t^*
		= L^\infty_{\sfrac1\phi}
			+ t^{\sfrac14} L^\infty_{\sfrac{1}{\mu_{\eps t}}}
		= \{h : h = \phi h_1 + t^{\sfrac14} \mu_{\eps t} h_2,\ h_i \in L^\infty\},
\ee
with the norm
\be
	\|h\|_{\tilde X_t^*}
		= \inf_{h = \phi h_1 + t^{\sfrac14} \mu_{\eps t} h_2} \left(\|h_1\|_{L^\infty} + \|h_2\|_{L^\infty}\right).
\ee

We now conclude using the semigroup property
\be
	f(t) = \tilde S_{\sfrac{t}{2}}^* S_{\sfrac{t}{2}} f_\init.
\ee
Indeed, recalling~\eqref{e.c061906} and~\eqref{e.c061907},
\be
	\|f(t)\|_{\tilde X_{\sfrac{t}{2}}^*}
		= \|\tilde S_{\sfrac{t}{2}}^* S_{\sfrac{t}{2}} f_\init\|_{\tilde X_t^*}
		\lesssim \frac{1}{t^{d + \sfrac14}} \|S_{\sfrac{t}{2}} f_\init\|_{L^2}
		\lesssim \frac{1}{t^{2d + \sfrac12}} \|f_\init\|_{X_{\sfrac{t}{2}}}.
\ee
The proof is finished after unpacking the definitions of the norms.
\end{proof}

\section{The main functional inequality: \texorpdfstring{\Cref{l.functional}}{Lemma \ref{l.functional}}}
\label{s.functional}

We decompose $\H\times \R^d$ into three regions $\cP_R$, $\cN_R$, and $\cO_R$, depending on the influence of the boundary.  Recall that these are defined in~\eqref{e.regions}.  We state $L^2$-estimates on each regime, postponing their proofs until \Cref{s.functional_proofs}.  In \Cref{s.functional_proof}, we combine these estimates to prove \Cref{l.functional}.

\subsection{The functional inequalities on each region}

We begin by stating the Poincar\'e-type inequality.  This inequality is on the portion of the domain that is ``close'' to the boundary $\gamma_-$, where particles are absorbed.  It quantifies this effect.
\begin{Proposition}
	[Poincar\'e inequality on the incoming region $\cP_R$]
	\label{p.poincare}
Fix any positive numbers $R$, $T_1$, and $T_2$ such that $2R < T_1 < T_2$.  Suppose that $g\in H^1_{\kin,0}([T_1- 2R,T_2]\times\H\times \R^d)$. Then
\be\label{e.c030109}
	\begin{split}
		\|g\|_{L^2([T_1,T_2]\times \cP_R)}^2
			\lesssim
				&R\lbr g\rbr_{H^1_\kin([T_1 - 2 R,T_2]\times \H \times \R^d)}^2
				\\&
				+ \sqrt R \lbr g\rbr_{H^1_\kin([T_1 - 2 R,T_2]\times \H \times \R^d)}
				\|g\|_{L^2([T_1 - 2 R,T_2]\times \H \times \R^d)}.
	\end{split}
\ee
\end{Proposition}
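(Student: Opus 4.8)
The argument is a boundary-adapted version of the trajectory proof of the kinetic Poincar\'e inequality in~\cite{AAMN}; the only genuinely new work is geometric. First, by the dilation identity~\eqref{e.domain_scaling} for $\cP_R$ and the fact that $\|\cdot\|_{L^2}$, $\lbr\cdot\rbr_{H^1_\kin}$, and the time length are each homogeneous of fixed degree under the $2$-$3$-$1$ scaling~\eqref{e.dilation}, all three terms in~\eqref{e.c030109} carry the same power of $R$, so it is enough to prove the statement for $R=1$:
\[
	\|g\|_{L^2([T_1,T_2]\times P_1)}^2
		\lesssim \lbr g\rbr_{H^1_\kin(Q)}^2
			+ \lbr g\rbr_{H^1_\kin(Q)}\,\|g\|_{L^2(Q)},
	\qquad Q:=[T_1-2,T_2]\times\H\times\R^d,
\]
for $T_1>2$ and $g\in H^1_{\kin,0}(Q)$.

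\textbf{Representation by flowing to $\gamma_-$.} Fix $z=(t,x,v)\in[T_1,T_2]\times P_1$. Unwinding the bound $\dist(\R\times\gamma_-,(0,x,v))\lesssim1$ via~\eqref{e.dist2} shows, on the one hand, that $v_1\gtrsim-1$ and, on the other, that $x_1$ is small enough that $z$ can be joined to $\gamma_-$ by the following short admissible trajectory: change the first velocity component from $v_1$ to a value $w_1\in[c,2c]$, keeping $(t,x,\overline v)$ fixed — a velocity jump of length $|v_1-w_1|\lesssim1$ on $P_1$ — and then follow the backward characteristic $s\mapsto(t-s,\,x-sw,\,w)$ of $Y$, with $w=(w_1,\overline v)$, up to the time $\tau:=x_1/w_1\lesssim1$, at which instant the spatial variable reaches $\partial\H$ with incoming first velocity $w_1>0$, i.e.\ a point of $\gamma_-$. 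Because $T_1>2$, the whole trajectory lies in $Q$, so $g$ vanishes at its endpoint since $g\in H^1_{\kin,0}(Q)$. Running the fundamental theorem of calculus backward along this path writes $g(z)$ as a $\nabla_vg$-integral along the jump segment plus a $Yg$-integral along the characteristic. Since the characteristic is a null direction of $Y$ the latter integrand is a bona fide derivative of $g$, but $Yg$ is controlled only in the $H^{-1}_v$-valued sense implicit in~\eqref{e.H1_kin_norm}; following~\cite{AAMN}, one therefore averages the construction over $w$ in a fixed ball (together with a mild regularization of the base point $x$), so that the transport contribution becomes a duality pairing $\langle Yg,\Psi_z\rangle$ against an admissible test field $\Psi_z$ supported in a unit kinetic ball about $z$.

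\textbf{Closing the estimate.} Write $\|g\|_{L^2([T_1,T_2]\times P_1)}^2=\int_{[T_1,T_2]\times P_1}g(z)^2\dz$ and substitute the representation for one of the two factors $g(z)$. For the jump term, Cauchy--Schwarz and the change of variables $z\mapsto(t,x,v+\sigma(w-v))$ (which has bounded Jacobian) give a bound $\lesssim\|g\|_{L^2(Q)}\|\nabla_vg\|_{L^2(Q)}\lesssim\|g\|_{L^2(Q)}\lbr g\rbr_{H^1_\kin(Q)}$, which is the cross term. For the transport term, exchanging the order of integration turns $\int_{[T_1,T_2]\times P_1}g(z)\langle Yg,\Psi_z\rangle\dz$ into $\langle Yg,\Phi\rangle$ with $\Phi=\int g(z)\Psi_z\dz$, and by the definition~\eqref{e.H1_kin_norm} this is $\lesssim\lbr g\rbr_{H^1_\kin(Q)}\|\nabla_v\Phi\|_{L^2}$; since $\Phi$ is obtained by averaging $g$ against the smoothed transport kernel, the regularization scale can be chosen so that $\|\nabla_v\Phi\|_{L^2}\lesssim\lbr g\rbr_{H^1_\kin(Q)}+\|g\|_{L^2(Q)}$, which produces the $\lbr g\rbr^2$ term and (again) the cross term. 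Undoing the scaling yields~\eqref{e.c030109}.

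\textbf{Main obstacle.} The delicate point is the transport term: because $Yg$ is controlled only weakly via~\eqref{e.H1_kin_norm} and the backward characteristics do not by themselves sweep out a full-dimensional set, the trajectory family must be regularized in $v$ and in $x$ with care, and it is precisely this regularization that turns what would otherwise be a clean $\lbr g\rbr^2$ bound into the appearance of the mixed term $\lbr g\rbr_{H^1_\kin}\|g\|_{L^2}$. The remaining geometric input — that every point of $\cP_R$ admits such a short trajectory to $\gamma_-$, which is where the shape of $\cP_R$ (in particular $v_1\gtrsim-\sqrt R$ and $x_1\lesssim\max\{R|v_1|,R^{\sfrac32}\}$) enters — is routine but must be tracked carefully.
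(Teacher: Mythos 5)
Your strategy is recognizably the same as the paper's (and as the interior argument of~\cite{AAMN}): represent $g(z)$ by integrating along a short trajectory — a velocity jump followed by transport — that ends where $g$ vanishes, then treat the jump piece by Cauchy--Schwarz and the transport piece by the $H^{-1}_v$--$H^1_v$ pairing in the definition~\eqref{e.H1_kin_norm}, after a one-sided mollification. The initial reduction to $R=1$ by dilation is a legitimate streamlining; the paper instead works at general $R$ and uses the same scaling only to build the cutoff $\chi_{\cP_R}$.

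There is, however, a genuine geometric error in your trajectory. You jump $v_1$ to a \emph{fixed value} $w_1\in[c,2c]$, claim $|v_1-w_1|\lesssim1$ and $\tau=x_1/w_1\lesssim1$ on $P_1$, and then transport for time $\tau$. Neither bound holds: \Cref{l.cP_R_bounds} (at $R=1$) gives only $v_1\geq -2$ and $x_1\leq\max\{v_1,3\}$, with \emph{no upper bound on $v_1$}. Taking $(x_1,v_1)=(N/2,N)$ with $N$ large, one has $\dist(\R\times\gamma_-,(0,x,v))\lesssim\sqrt{x_1/v_1}=2^{-1/2}<1$, so the point lies in $P_1$, yet the jump length is $\approx N$ and the transport time is $\approx N/(2c)$. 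Consequently the change of variables in your Cauchy--Schwarz step has unbounded Jacobian, the transport can exit $[T_1-2,T_2]$ (the hypothesis only guarantees $T_1>2$), and the resulting bounds do not close.

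The fix — and this is what the paper does — is to jump by a \emph{fixed increment} rather than to a fixed value: take $v_R=v+10\sqrt R\,e_1$ and transport along $v_R$ for a \emph{fixed} time $2R$. The jump length is then exactly $10\sqrt R$ and the transport time exactly $2R$, uniformly over $\cP_R$, and the endpoint has $x_1-2R\,(v_R)_1<0<(v_R)_1$ by \Cref{l.cP_R_bounds}. Rather than engineering the transport time to land exactly on $\gamma_-$, the paper deliberately overshoots into $\{x_1<0,\ v_1>0\}$; extending $g$ by zero there and choosing the mollifier $\psi$ supported in $\{t>0,\ x_1>0,\ v_1<0\}$ makes the mollified $g_s$ vanish identically in that region, which is cleaner than tracking the hitting time $x_1/w_1$ and its dependence on the base point. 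Once the trajectory is corrected this way, the rest of your outline — Cauchy--Schwarz for the jump contribution (giving $\sqrt R\lbr g\rbr_{H^1_\kin}\|g\|_{L^2}$) and the $H^{-1}_v$--$H^1_v$ pairing with the cutoff's $\nabla_v$ cost for the transport contribution (giving $R\lbr g\rbr_{H^1_\kin}^2+\sqrt R\lbr g\rbr_{H^1_\kin}\|g\|_{L^2}$) — goes through as in the paper.
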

Let us note that the norms on the right hand side can be localized to $P_{cR}$, for an appropriate $c>1$, with some extra care in the proof.  We opt for simplicity here.

Next, we state the Nash-type inequality.  This inequality is on the portion of the domain that is ``far'' from all boundaries.  It quantifies the fact that the evolution of~\eqref{e.kfp} is on $\R \times \H\times \R^d$ is essentially the same it would be on $\R \times \R^d \times \R^d$.  Let us make note that $\cN_{\sfrac{R}2}$ is {\em larger} than $\cN_R$.
\begin{Proposition}
	[Nash inequality on $\cN_R$]
	\label{p.Nash}
Suppose that $g\in H^1_\kin\left( [T_1,T_2]\times \H \times \R^d\right)$, and fix $R,\eps>0$.  If $\eps$ is sufficiently small,
\be
	\begin{split}
		\int_{T_1}^{T_2} \|g\|_{L^2(\cN_R)}^2\,dt
			\lesssim & \sqrt{\eps R} \lbr g\rbr_{H^1_\kin([T_1-\eps R,T_2]\times \cN_{\sfrac{R}{2}})} \| g\|_{L^2([T_1-\eps R,T_2]\times \cN_{\sfrac{R}{2}})}
				\\&\qquad
				+ \frac{1}{(\eps R)^{2d+1}} \|g\|_{L^1([T_1-\eps R,T_2]\times \cN_{\sfrac{R}{2}})}^2.
	\end{split}
	\ee
\end{Proposition}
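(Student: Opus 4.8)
The plan is to derive the Nash inequality on $\cN_R$ as a consequence of the abstract localized Nash inequality \Cref{t.ws_Nash}, applied slice-by-slice in time. First I would fix a time $t \in [T_1, T_2]$ and observe that, up to the $2$-$3$-$1$ scaling in~\eqref{e.dilation}, the pair of sets $\Omega_1 = \cN_R$ and $\Omega_2 = \cN_{\sfrac{R}{2}}$ satisfies the hypothesis of \Cref{t.ws_Nash}: there is a fixed bounded open set $B$ (whose size depends only on the dimension) such that $\cN_R \circ (\delta_s B)^{-1} \subset \cN_{\sfrac{R}{2}}$ for all $s$ up to $s_0 \approx \sqrt{R}$. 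This is exactly the statement that a point at kinetic distance $\geq \sqrt{R/10}$ from $\R\times\partial\H\times\R^d$ remains at distance $\geq$ something comparable after being moved by a group element of kinetic size $\lesssim \sqrt{\e R}$; it follows from the triangle inequality~\eqref{e.defQ} and the scaling relation $\cN_R = \delta_{1/\sqrt R} N_1$ in~\eqref{e.domain_scaling}, taking $\e$ small enough. Here one has to be a little careful that \Cref{t.ws_Nash} as stated is phrased for subsets of $\R_+\times\R^{2d}$ with a genuine time-extent, so I would actually apply it on the space-time slabs $\Omega_1 = [T_1,T_2]\times\cN_R$ and $\Omega_2 = [T_1 - \e R, T_2]\times \cN_{\sfrac R2}$, with $B$ a kinetic cylinder of unit radius and the dilation parameter $s = c\sqrt{\e R}$.

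The core step is then simply to invoke \Cref{t.ws_Nash} with $g$ (restricted to the slab) and with this choice of $s$. This immediately yields
\be
	\|g\|_{L^2([T_1,T_2]\times\cN_R)}^2
		\lesssim s\, \lbr g\rbr_{H^1_\kin([T_1-\e R, T_2]\times\cN_{\sfrac R2})}\|g\|_{L^2([T_1-\e R,T_2]\times\cN_{\sfrac R2})}
		+ \frac{1}{s^{4d+2}} \|g\|_{L^1([T_1-\e R, T_2]\times\cN_{\sfrac R2})}^2.
\ee
Substituting $s \approx \sqrt{\e R}$ gives $s \approx \sqrt{\e R}$ in the first term and $s^{4d+2} \approx (\e R)^{2d+1}$ in the second, which is exactly the claimed inequality after absorbing the dimensional constant from $B$ into the implied constant. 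The left side of the displayed inequality in the proposition is written as $\int_{T_1}^{T_2}\|g\|_{L^2(\cN_R)}^2\,dt$, which is literally $\|g\|_{L^2([T_1,T_2]\times\cN_R)}^2$, so no further manipulation is needed there.

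The main obstacle I anticipate is verifying the set-inclusion hypothesis of \Cref{t.ws_Nash} cleanly, i.e.\ finding the explicit bounded $B$ and the correct range of $s$. The subtlety is that $\cN_R$ is defined via distance to $\R\times\partial\H\times\R^d$ using the kinetic metric $d_\kin$, and one must check that composing on the right with $(\delta_s B)^{-1}$ — which also shifts the time coordinate — does not push points out of $\cN_{\sfrac R2}$. Using~\eqref{e.dist2}, a point $z$ lies in $\cN_\rho$ iff every $z'$ with $z\circ z' \in \R\times\partial\H\times\R^d$ has $\VERT z'\VERT \geq \sqrt{\rho/10}$; combined with the triangle inequality and the scaling, one gets that $z \circ w \in \cN_{\sfrac R2}$ whenever $z \in \cN_R$ and $\VERT w\VERT$ is smaller than a fixed fraction of $\sqrt R$, which is ensured by taking $\e$ (hence $s_0$) small. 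I would also need to note that the time-extent of $\Omega_2$ must be genuinely larger than that of $\Omega_1$ so that the $B$-translate stays inside, which is why the lower time endpoint is pushed back by $\e R$; this is consistent with the $2$-$3$-$1$ scaling since a kinetic ball of radius $\sqrt{\e R}$ has time-extent $\e R$. Everything else is a direct quotation of \Cref{t.ws_Nash}.
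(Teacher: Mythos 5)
Your proof is correct in substance, but it reverses the logical dependence the paper uses. You deduce \Cref{p.Nash} from \Cref{t.ws_Nash} by verifying the set-inclusion hypothesis with $\Omega_1=[T_1,T_2]\times\cN_R$, $\Omega_2=[T_1-\eps R,T_2]\times\cN_{\sfrac R2}$, a fixed dimensional $B$, and $s\approx\sqrt{\eps R}$; the substitution $s^{4d+2}=(\eps R)^{2d+1}$ then gives exactly the stated constants. The paper instead proves \Cref{p.Nash} directly: it mollifies $g$ at scale $\sqrt{\eps R}$, splits $g=g_{\sqrt{\eps R}}+(g-g_{\sqrt{\eps R}})$, controls the mollified piece with the kinetic Young inequality (\Cref{l.Youngs}) and the remainder by integrating along a group path (a $v$-shift, a transport shift handled via the $H^{-1}_v$--$H^1_v$ pairing, then another $v$-shift), and only afterwards, in \Cref{s.ws_time_decay}, remarks that \Cref{t.ws_Nash} ``follows exactly the outline of the proof of \Cref{p.Nash}.'' Taken as the paper is written your derivation would therefore be circular; the fix is easy and your instinct is the right one --- prove the abstract \Cref{t.ws_Nash} first (by that same mollification argument, with the mollifier supported on $B$), after which your deduction of the special case is clean and avoids duplicating the convolution computation. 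Two small corrections to your set-up: (i) $s_0$ must be taken $\approx\sqrt{\eps R}$, not $\approx\sqrt R$, since the hypothesis of \Cref{t.ws_Nash} requires $\Omega_1\circ(\delta_s B)^{-1}\subset\Omega_2$ for \emph{every} $s\le s_0$, and the backward time shift of order $s^2$ must stay inside the $\eps R$ temporal slack between $\Omega_1$ and $\Omega_2$; you do note this later, but your initial claim is off. (ii) $B$ should not be a kinetic cylinder $Q_1$ (which is a past cylinder, $t\le 0$) but a forward-time set such as $\{z\in(\sfrac12,1)\times\R^{2d}:d_\kin(0,z)\le1\}$, as the paper uses for the mollifier's support, so that $\Omega_1\circ(\delta_s B)^{-1}$ is displaced backward in time. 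Your verification of the spatial part of the inclusion --- using left-invariance of $d_\kin$, the triangle inequality, and the fact that $\dist(\R\times\partial\H\times\R^d,\cdot)$ is independent of the time coordinate --- is the correct argument.
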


Finally, we state Poincar\'e-type inequality of a different flavor.  Particles near the outgoing part of the boundary $\gamma_+$, will likely leave the domain.  In the context of~\eqref{e.kfp}, this quantifies the effect of the boundary in~\eqref{e.energy}.  The following estimate quantifies that.
\begin{Proposition}
	[Poincar\'e inequality on the outgoing region $\cO_R$]
	\label{p.outgoing}
Fix any positive numbers $R$, $T_1$, and $T_2$ such that $2R < T_1 < T_2$.  Suppose that $g\in H^1_\kin([T_1- 2R,T_2]\times\H\times \R^d)$. Then
\be
	\begin{split}
		\|g\|_{L^2([T_1,T_2]\times \cO_R)}^2
			\lesssim
				&R\lbr g\rbr_{H^1_\kin([T_1,T_2+R]\times \H\times \R^d)}^2
				\\&
				+ \sqrt R \lbr g\rbr_{H^1_\kin([T_1,T_2 + R]\times \H\times \R^d)}
			\|g\|_{L^2([T_1,T_2 + 2R]\times\H\times \R^d))}
			\\&
				+ R\int_{T_1}^{T_2 + R} \int_{\R^d} \int_{\R^{d-1}}
					|v_1| f(t,(0,\overline x), v)^2 \, d\overline x \dv \dt.
	\end{split}
\ee
\end{Proposition}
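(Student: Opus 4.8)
The plan is to prove \Cref{p.outgoing} by the weighted test-function method of~\cite{AAMN}, exactly as for \Cref{p.poincare}; the only structural difference is that $g$ is not assumed to vanish on $\gamma_+$, so the outgoing flux from~\eqref{e.energy} must be kept on the right-hand side rather than discarded. Throughout I track the parameter $R$ explicitly, noting that by the scaling~\eqref{e.domain_scaling} and the $2$-$3$-$1$ homogeneity of~\eqref{e.kfp} the region $\cO_R$ sits at $v$-scale $\sqrt R$ and $x_1$-scale $R^{\sfrac32}$, which forces the exponents below.

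The heart of the argument is the construction of a nonnegative weight $\chi=\chi(t,x,v)$, supported in a mild fattening of $[T_1,T_2+R]\times\cO_R$, such that (i) $Y\chi\ge \1_{[T_1,T_2]\times\cO_R}$ and $Y\chi\ge 0$ everywhere, (ii) $\|\chi\|_{L^\infty}\lesssim R$ and $\|\nabla_v\chi\|_{L^\infty}\lesssim\sqrt R$, and (iii) $\chi(T_2+R,\cdot,\cdot)\equiv 0$. The model is $\chi=\rho(\theta)\,\beta(\sigma)\,\eta(v)$, where $\sigma(x,v)=x_1/|v_1|$ is the forward transport time from $(x,v)$ to $\gamma_+$ (defined when $v_1<0$), $\theta(t,x,v)=(T_2+R-t)-\sigma(x,v)$ detects whether that exit precedes time $T_2+R$, $\eta$ is a velocity cutoff equal to $1$ on the $v_1$-range of $\cO_R$ and vanishing for $v_1$ near $0$, $\beta\ge0$ is a smoothing of $(c_0R-\sigma)_+$ (so $\beta'\approx-1$ while $\sigma\lesssim R$), and $\rho\ge0$ is nondecreasing with $\rho\equiv1$ once $\theta\gtrsim R$. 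Since $Y\sigma=-1$ along characteristics reaching $\gamma_+$, one gets $Y\theta=0=Y\eta$, hence $Y\chi=-\rho(\theta)\beta'(\sigma)\eta(v)\ge0$, with a strictly positive lower bound on $[T_1,T_2]\times\cO_R$ (there $\rho=\eta=1$ and $-\beta'\gtrsim1$, using the explicit description of $\cO_R$); property (iii) is immediate because $\theta\le0$ at $t=T_2+R$. For (ii) one uses $|\nabla_v\sigma|=x_1/v_1^2\lesssim R/|v_1|\lesssim\sqrt R$ on $\{\eta\ne0\}\cap\{\beta\ne0\}$, together with the scaling-dictated bounds $\|\rho'\|_\infty\lesssim1/R$, $\|\beta'\|_\infty\lesssim1$, $\|\nabla_v\eta\|_\infty\lesssim1/\sqrt R$.

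With such a $\chi$ in hand, a single integration by parts finishes the proof. Using $2g\,Yg=Y(g^2)$ and integrating over $[T_1,T_2+R]\times\H\times\R^d$,
\[
	\int_{T_1}^{T_2}\!\!\int_{\cO_R}g^2\,\dx\dv\dt
	\le\int_{T_1}^{T_2+R}\!\!\int (Y\chi)\,g^2
	\le -2\int_{T_1}^{T_2+R}\!\!\int (Yg)(\chi g)
		+ \int_{T_1}^{T_2+R}\!\!\int_{\gamma_+}|v_1|\,\chi\, g^2 ,
\]
where in the second step the terminal-time term drops by (iii), the (nonpositive) $t=T_1$ term is discarded, and on $\{x_1=0\}$ only $v_1<0$ contributes (the support of $\eta$), where $v\cdot\eta_x=|v_1|$. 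Bounding $\chi\lesssim R$ in the last integral produces the claimed flux term $R\int_{T_1}^{T_2+R}\int_{\R^{d-1}\times\R^d}|v_1|\,g(t,(0,\overline x),v)^2\, d \overline x \dv \dt$. For the first integral, the definition~\eqref{e.H1_kin_norm} of the $H^1_\kin$-seminorm applied to the test function $\chi g$, followed by (ii), gives
\[
	-2\int (Yg)(\chi g)
	\le 2\,\lbr g\rbr_{H^1_\kin([T_1,T_2+R]\times\H\times\R^d)}\|\nabla_v(\chi g)\|_{L^2}
	\lesssim \lbr g\rbr_{H^1_\kin}\big(\sqrt R\,\|g\|_{L^2}+R\,\lbr g\rbr_{H^1_\kin}\big),
\]
which supplies the remaining terms $R\,\lbr g\rbr_{H^1_\kin}^2$ and $\sqrt R\,\lbr g\rbr_{H^1_\kin}\|g\|_{L^2}$ and completes the estimate.

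The step I expect to be the real obstacle is the construction of $\chi$: we need $Y\chi\ge0$ \emph{globally} (so that passing to $\int(Y\chi)g^2$ loses nothing and no uncontrolled $\|g\|_{L^2}^2$ appears) while at the same time forcing $\chi$ to vanish at the terminal time $T_2+R$ and keeping the sharp scaling $\chi\sim R$, $\nabla_v\chi\sim\sqrt R$. The transport-conserved quantity $\theta=(T_2+R-t)-\sigma$ is precisely what reconciles these demands, since composing with $\rho$ lets $\chi$ switch off near $t=T_2+R$ without ever contributing a negative term to $Y\chi$. A secondary technicality, present already in \Cref{p.poincare}, is to check that $\chi g$ is an admissible competitor in~\eqref{e.H1_kin_norm} and that the trace of $g$ on $\gamma_+$ is well defined; both follow from the density of smooth functions, as in~\cite{AAMN}.
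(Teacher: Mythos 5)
Your proof is correct, and it takes a genuinely different route from the paper's. The paper proceeds by extending $g$ across $\{x_1 = 0\}$ as $\overline g(t,x,v) = g(t - x_1/v_1, (0,\overline x), v)$ for $x_1 < 0$ (so that $Y\overline g = 0$ there), then writes $g(t,x,v)^2$ via the fundamental theorem of calculus along the backward transport characteristic from $(t,x,v)$ to $\gamma_+$, multiplied by a cutoff $\chi_{\cO_R}(x,v)$ obeying the scaling bounds~\eqref{e.chi_cO_R2}. The bulk term is estimated by the $H^{-1}_v$-$H^1_v$ pairing after a change of variables, and the boundary trace term is converted to a flux integral using $x_1 \lesssim R\,|v_1|$ on the support of $\chi_{\cO_R}$ (\Cref{l.cO_R_bounds}). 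You instead build a transport-adapted multiplier $\chi = \rho(\theta)\beta(\sigma)\eta(v)$ whose key feature is that $\theta = (T_2+R-t) - \sigma$ is conserved by $Y$ (since $Y\sigma = -1$), so $Y\chi = -\rho\,\beta'\,\eta \ge 0$ with a uniform positive lower bound on $[T_1,T_2]\times\cO_R$; a single integration by parts of $\int (Y\chi)\,g^2$ then produces the $\gamma_+$-flux term directly (with $\chi|_{\gamma_+} = \rho\,\beta(0)\,\eta \lesssim R$) and the bulk term is handled identically to the paper via the $H^{-1}_v$-$H^1_v$ pairing applied to $\chi g$. The gradient bound $|\nabla_v\sigma| = x_1/v_1^2 \lesssim \sqrt R$ on $\supp(\beta\eta)$ is the only point that needs a small remark: it requires $|v_1| \gtrsim \sqrt R$ there, which holds because on the fattened outgoing region one has both $x_1 \lesssim R^{3/2}$ (closeness to $\gamma_+$) and $\dist(\R\times\gamma_-,z) \gtrsim \sqrt R$, forcing $v_1 \le -c\sqrt R$; this is implicit in the proof of \Cref{l.cO_R_bounds} and can also be baked into the choice of $\eta$. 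Your route avoids the somewhat awkward extension of $g$ to $x_1 < 0$, at the cost of a more intricate three-factor multiplier; it is also closer in spirit to the construction of $\mu_R$ in \Cref{s.zeta} (a supersolution/weight adapted to transport), which is a pleasant unification, whereas the paper's FTC-along-characteristics method mirrors the proof of \Cref{p.poincare} more closely.
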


As we noted after \Cref{p.poincare}, the norms on the right hand side of the inequality in \Cref{p.outgoing} can be localized with some extra care.

We remind the reader that the proofs of these lemmas can be found in \Cref{s.functional_proofs}.

\subsection{The proof of \texorpdfstring{\Cref{l.functional}}{Lemma \ref{l.functional}}}\label{s.functional_proof}

\begin{proof}
	It is clear that this is a simple consequence of \Cref{p.poincare,p.Nash,p.outgoing} combined with Young's inequality.  We omit the details.
\end{proof}

\subsection{Establishing the Nash and Poincar\'e-type inequalities}\label{s.functional_proofs}

\subsubsection{The proof of the Poincar\'e-type inequality on $\cP_R$}

To begin, we first show that the Poincar\'e regime can be characterized in a simple way, depending on $(x,v)$.  This is useful in understanding ``paths'' from any point in $\cP_R$ to the boundary $\gamma_+$ that are at the heart of the proof of \Cref{p.poincare}.  The proof is postpone to \Cref{s.technical}.

\begin{Lemma}\label{l.cP_R_bounds}
	Suppose that $(x,v) \in \cP_R$.  Then
	\be
		x_1 \leq R \max\{v_1,3\sqrt R\}
		\quad\tand\quad
		-2 \sqrt R \leq v_1.
	\ee
	Additionally, if $(x,v) \in \cO_R$, then
	\be
		\frac{x_1}{|v_1|} \leq R.
	\ee
\end{Lemma}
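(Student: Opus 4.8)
The plan is to unwind the definitions of $\cP_R$ and $\cO_R$ from \eqref{e.regions} using the group law \eqref{e.Galilean} and the identity \eqref{e.dist2}. Fix $(x,v)\in\cP_R$. Since $\dist(\R\times\gamma_-,(0,x,v))\le\sqrt R$ is an infimum, for each $\delta>0$ there is $z'=(t',x',v')$ with $(0,x,v)\circ z'=(t',\,x+x'+t'v,\,v+v')\in\R\times\gamma_-$ and $\VERT z'\VERT\le\sqrt R+\delta$, and I would let $\delta\to0$ at the end. Membership in $\R\times\gamma_-$ says exactly that $x_1+x_1'+t'v_1=0$ and $v_1+v_1'>0$, while \eqref{e.norm} produces $w\in\R^d$ with
\[
  |t'|\le(\sqrt R+\delta)^2,\quad |x'-t'w|\le(\sqrt R+\delta)^3,\quad |v'-w|\le\sqrt R+\delta,\quad |w|\le\sqrt R+\delta,
\]
and in particular the same bounds in the first coordinate.

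From $|v_1'-w_1|\le\sqrt R+\delta$ and $|w_1|\le\sqrt R+\delta$ I get $|v_1'|\le2(\sqrt R+\delta)$, so the sign condition $v_1+v_1'>0$ forces $v_1>-v_1'\ge-2(\sqrt R+\delta)$; letting $\delta\to0$ gives $v_1\ge-2\sqrt R$. For the bound on $x_1$, I would write
\[
  x_1=-x_1'-t'v_1=-(x_1'-t'w_1)-t'(w_1+v_1).
\]
The condition $v_1+v_1'>0$ together with $|v_1'-w_1|\le\sqrt R+\delta$ gives $w_1+v_1>-(\sqrt R+\delta)$, and $|w_1|\le\sqrt R+\delta$ gives $w_1+v_1\le v_1+\sqrt R+\delta$; feeding these, along with $|x_1'-t'w_1|\le(\sqrt R+\delta)^3$ and $|t'|\le(\sqrt R+\delta)^2$, into the displayed identity and letting $\delta\to0$ yields $x_1\lesssim R^{\sfrac32}+R\max\{v_1,0\}$, which is the asserted inequality (the constant $3$ in the statement being chosen with room to spare).

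For the last claim, suppose $(x,v)\in\cO_R$. Running the same extraction with $\gamma_+$ in place of $\gamma_-$ and $\sqrt{\sfrac R{10}}$ in place of $\sqrt R$ gives $z'$ and $w$ with $x_1+x_1'+t'v_1=0$, $v_1+v_1'<0$, and the analogous norm bounds; in particular $|v_1|\lesssim\sqrt R$ and $x_1\lesssim(\sfrac R{10})^{\sfrac32}+\tfrac R{10}|v_1|$. The extra ingredient is that $(x,v)\notin\cP_R$ (since $\cO_R\cap\cP_R=\emptyset$), and the key claim is that this forces $v_1\le-\tfrac12\sqrt R$. To see this I would argue by contradiction: if $v_1>-\tfrac12\sqrt R$ then $|v_1|<\sqrt R$, hence $x_1\lesssim R^{\sfrac32}$, and I can write down an explicit $z''=(t'',x'',v'')$ witnessing $(x,v)\in\cP_R$ — namely $\overline x''=\overline v''=0$, $v_1''=\max\{-v_1,0\}+\tfrac12\sqrt R$ (so that $v_1+v_1''\ge\tfrac12\sqrt R>0$ and $0\le v_1''\le\sqrt R$), $w=(v_1'',0,\dots,0)$, and $t''=-x_1/(v_1+v_1'')$, $x_1''=-x_1-t''v_1$, so that $x_1+x_1''+t''v_1=0$ and $x_1''-t''w_1=0$. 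Then $|t''|\le 2x_1/\sqrt R\lesssim R$, so every one of the four quantities defining $\VERT z''\VERT$ is $\le\sqrt R$ (two of them vanish by construction), giving $\dist(\R\times\gamma_-,(0,x,v))\le\sqrt R$, a contradiction. Hence $v_1\le-\tfrac12\sqrt R$, and then $\sfrac{x_1}{|v_1|}\lesssim (\sfrac R{10})^{\sfrac32}/(\tfrac12\sqrt R)+\tfrac R{10}\le R$ after checking the numerical constants. I expect this final step — producing $z''$ and checking all four entries of its kinetic norm simultaneously — to be the only genuinely delicate part; it is where the (otherwise arbitrary) factor $\sfrac1{10}$ in the definition of $\cO_R$ gets used. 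The rest is bookkeeping with $\circ$ and $\VERT\cdot\VERT$.
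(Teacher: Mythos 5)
Your treatment of the two $\cP_R$ conclusions is essentially the paper's: extract a (near-)minimizer $\zeta$ of $\dist(\R\times\gamma_-,(0,x,v))$, read off $x_1+\xi_1+\tau v_1=0$ and $v_1+\omega_1>0$ from $z\circ\zeta\in\R\times\gamma_-$, and feed in the four scalar bounds coming from $\VERT\zeta\VERT\le\sqrt R$; the only cosmetic difference is that you use $\delta$-approximate minimizers and pass to the limit instead of invoking attainment. For the $\cO_R$ conclusion you genuinely diverge from the paper --- in fact you are more complete than it is, since the paper's proof of this lemma never addresses the $\cO_R$ claim at all; it is subsumed by the subsequent \Cref{l.cO_R_bounds}, proved for the slightly larger set $\Theta_R$ under the weaker hypothesis $(x,v)\notin\cP_{R/2}$. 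That proof splits on $|v_1|\gtrless\sqrt{R/2}$: it bounds the ratio directly when $|v_1|$ is large, and when $|v_1|$ is small it tests the kinetic norm at the fixed $\zeta=\big(\tfrac R4,(-x_1-\tfrac R4 v_1,0),(-v_1,0)\big)$ with $w=0$, concluding $|x_1+\tfrac R4 v_1|\ge(R/2)^{3/2}$ and finishing by a sign dichotomy. You instead establish the clean intermediate claim $v_1\le-\tfrac12\sqrt R$ on all of $\cO_R$ (using the full strength of $(x,v)\notin\cP_R$), by constructing a different explicit witness $z''$ together with a choice of $w$ that forces two of the four entries of $\VERT z''\VERT$ to vanish identically, so that only $|t''|^{\sfrac12}$ and $|w|$ need checking; once $|v_1|\gtrsim\sqrt R$ is in hand, the ratio bound drops out of the $\gamma_+$ estimate with no further case analysis. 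Both arguments are correct and both hinge on the same numerology (the $\sfrac1{10}$ in $\cO_R$ is exactly what makes the $x_1$ bound small enough for the crucial $|t''|\le R$). One small slip: right after extracting the $\gamma_+$ near-minimizer you write ``$|v_1|\lesssim\sqrt R$''; that inequality is only true from above, $v_1\lesssim\sqrt R$, since $v_1$ may be very negative in $\cO_R$. You never use the false direction, so the argument is unaffected, but the stated claim should be corrected.
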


We now proceed with our Poincar\'e-type estimate.  For the proof of \Cref{p.poincare}, let us make the convention that every norm is taken on $[T_1-2R,T_2]\times \H\times \R^d$ unless otherwise specified.  For example, by writing
\be
	\|g\|_{L^2}
		\quad\text{ we mean }\quad
	\|g\|_{L^2([T_1-2R,T_2]\times \H\times \R^d)}.
\ee
This saves significant space and does not cost clarity.

\begin{proof}[Proof of \Cref{p.poincare}]
Let us extend $g$ to $\overline g$ by
\be
	\overline g(t,x,v)
		= \begin{dcases}
			g(t,x,v) \qquad &\text{ if } x_1>0,\\
			0 \qquad &\text{ if } x_1 \leq 0 < v_1.
		\end{dcases}
\ee
Take any mollifier: a nonnegative, smooth function $\psi$ such that
\be
	\int \psi(z) \dz = 1.
\ee
Up to translation and dilation, we may assume that
\be
	\supp \psi \subset (0,1) \times \H \times \tilde \H
		= \{t \in (0,1), x_1 >0, v_1 < 0\}.
\ee
Define, for any $s\in (0,\sqrt{\eps R}]$, 
	\be
	\begin{split}
		g_s(z)
		& = (\psi_s*\overline g)(z)
		= \int \psi(\delta_{\sfrac1s}(\tilde z)) \overline g(\tilde z^{-1} \circ z)  \frac{\dtz}{s^{4d+2}}
		= \int \psi(\tilde z) \overline g(\delta_s(\tilde z)^{-1}\circ z) \dtz,
	\end{split}	
	\ee
	where, for all $z$,
	\be
	\psi_s(z) := \frac{1}{s^{4d+2}}\psi\left(\delta_{\sfrac1s}(z)\right)
	\qquad\text{ satisfies }
	\quad
	\int \psi_s \dw\dy\ds = 1
	\ee
	We observe a few simple facts about $g_s$.  First, after changing variables, we see that $g_s$ is smooth. Second, from~\eqref{e.c0515601}, it is clear that
\be
	\lim_{s\to0}\|g_s - g\|_{H^1_\kin} = 0.
\ee
Hence, we need only prove~\eqref{e.c030109} for $g_s$.
%	by~\eqref{e.c050302} and a standard change of variables. Next, by fundamental theorem of calculus, we see
%	\be\label{e.w05111}
%	\begin{split}
%		 g_{\sqrt{\eps R}}(z)^2
%		= &
%		- 2\int_0^{\eps R}  g_{\sqrt{\eps R},b,0}(z) \partial_b g_{\sqrt{\eps R},b,0}( z)  \,db
%		\\&
%		+ 2\int_{\sqrt{\eps R}}^0 g_{\sqrt{\eps R},\eps R,c}(z) \partial_c g_{\sqrt{\eps R};\eps R,c}(z)   \,dc
%	\end{split}
%	\ee
Finally, due to the choice of support of $\psi$, we see that $g_s$ is well-defined on
\be
	[T_1-\eps R, T_2]\times \Gamma
		:= [T_1-\eps R, T_2]\times
		%\underbrace{\left(\left(\H \times \R^d\right) \cup \left( \tilde \H \times \H\right)\right)}_{=
		\{\text{$x_1 \geq 0$ or both $x_1\leq 0$ and $v_1\geq0$}\},%}%\left( \left( \H \times \R^d \right) \cup \{x_1 < 0, v_1 > 0\} \right)
\ee
and that
%This is smooth and well-defined on $(\R_+ \times \R) \cup (\R \times \R_+)$ by the choice of support of $\psi$. 
% Moreover,
\be\label{e.c030108}
	g_s(t,x,v) = 0 \qquad \text{ for any } x_1 \leq 0, v_1 \geq 0.
\ee
We use the notation $\Gamma$ here because this region, when $d=1$, looks approximately like a ``backwards'' $\Gamma$.  See \Cref{f.Gamma}.

\begin{figure}
\centering
\begin{tikzpicture}[scale=.5]
% Draw axes

% Shade quadrants
\fill[gray!50] (0,0) -- (3,0) -- (3,3) -- (0,3) -- cycle;  % First quadrant
\fill[gray!50] (0,0) -- (-3,0) -- (-3,3) -- (0,3) -- cycle;  % Second quadrant
\fill[gray!50] (0,0) -- (3,0) -- (3,-3) -- (0,-3) -- cycle;  % Fourth quadrant

\draw[<->] (-3,0) -- (3,0) node[right] {\tiny $x_1$};
\draw[<->] (0,-3) -- (0,3) node[above] {\tiny $v_1$};
\end{tikzpicture}
\caption{The region $\Gamma$.  Notice that it looks like a backwards $\Gamma$.}
\label{f.Gamma}
\end{figure}
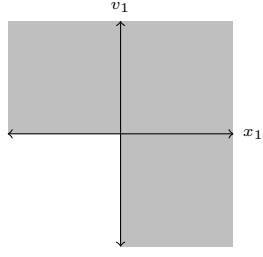

%\begin{comment}
%Finally, using the fundamental theorem of calculus and the definition of $\Omega_{2R}$, if $(x,v) \in \Omega_{2R}$,
%\be\label{e.c030107}
%	\begin{split}
%		g_\eps(t,x,v)^2
%			= &- 2 \int_0^{\sqrt{4R}} g_\eps(t,x,v+w) \partial_v g_\eps(t,x,v+w) \dw
%				\\&
%				- 2 \int_0^{4R} g_\eps(t-s,x-sv_R,v_R) (Y g_\eps)(t-s, x-sv_R, v_R) \ds,
%	\end{split}
%\ee
%where 
%\end{comment}

Let $\chi_{\cP_R}$ be a cutoff function for $\cP_R$ such that
\be\label{e.chi_cP_R1}
	\chi_{\cP_R} \equiv 1 \qquad \text{ in } \cP_R
	\quad\text{ and }\quad
	\chi_{\cP_R} \equiv 0 \qquad \text{ in } \cP_{\sfrac{3R}{2}}^c
\ee
while
\be\label{e.chi_cP_R2}
	\|\nabla_v \chi_{\cP_R}\|_{L^\infty} \lesssim \frac{1}{\sqrt R}
	\quad\text{ and }\quad
	\|\nabla_x \chi_{\cP_R}\|_{L^\infty} \lesssim \frac{1}{R^{\sfrac32}}.
\ee
This can easily be constructed when $R=1$ and the general case follows by letting
\be\label{e.chi_cP_R3}
	\chi_{\cP_R}(z) = \chi_{P_1}\left(\delta_{1/\sqrt R} (z)\right).
\ee
Notice that we use the scaling~\eqref{e.domain_scaling}.

Fix any $(x,v) \in\supp \chi_{\cP_R}$, and let
\be
	v_R = v+ 10 e_1 \sqrt{R}
\ee
for succinctness.  Here $e_1 = (1,0,\dots, 0)$ is the first canonical basis vector.  Let us note that, if $s$ is sufficiently small in a way depending only on $\eps$ and $R$, 
\be\label{e.w05271}
	g_s(x - 2R v_R, v_R) = 0
\ee
because, recalling that $(x,v) \in \cP_{\sfrac{3R}{2}}$ and using \Cref{l.cP_R_bounds},
\be
	x_1 - 2R v_1 - 20 R^{\sfrac32}
		< 0
		< v_1+ 10\sqrt{R}.
\ee
Hence, we may write
\be
	 g_s(t,x,v)^2
	 	%\\&\quad
		= -2  \int_0^{10\sqrt{R}} (g_s\, \partial_{v_1} g_s)(t,x,v+re_1) \dr 
			-2 \int_0^{2R} (g_s \, Yg_s)(t-s, x-rv_R, v_R) \dr.
\ee
We deduce that
\be\label{eq:I1}
\begin{split}
	&\int_{T_1}^{T_2}\int_{\cP_R} g_s(t,x,v)^2 \dz
		\leq \int_{T_1}^{T_2}\int_{\H\times \R^d} g_s(t,x,v)^2 \chi_{\cP_R}(x,v)^2 \dz
		\\&\quad
		= -2 \int_{T_1}^{T_2}\int_{\H\times \R^d}
			\Big( \int_0^{10\sqrt{R}} (g_s\, \partial_{v_1} g_s)(t,x,v+re_1) \dr \Big) \chi_{\cP_R}(x,v)^2 \, dt \, dx \, dv
		\\&\qquad
			-2 \int_{T_1}^{T_2}\int_{\H\times \R^d}  \Big(\int_0^{2R} (g_s \, Yg_s)(t-s, x-rv_R, v_R) \dr \Big) \chi_{\cP_R}(x,v)^2 \, dt \, dx \, dv
		\\&\quad
		=: I_1 + I_2.
\end{split}
\ee
We estimate each term in turn.

Let us handle $I_1$ first as it is simpler.  Then
\be\label{e.c030502}
	\begin{split}
		\left|I_1\right|
			& \approx \Big|\int_0^{10\sqrt{R}} \int_{T_1}^{T_2}\int_{\H\times \R^d}  (g_s\, \partial_{v_1} g_s)(t,x,v)  \chi_{\cP_R}(x,v-re_1)^2 \dt\dx \dv\dr\Big|
			\\&
			\leq \int_0^{10\sqrt{R}} \|\partial_v g_s\chi_{\cP_R}(\cdot,\cdot-re_1)\|_{L^2([T_1,T_2]\times \H\times \R^d)} \|g_s \chi_{\cP_R}(\cdot,\cdot-re_1)\|_{L^2([T_1,T_2]\times \H\times \R^d)}\dr
			\\&
%			\lesssim \int_0^{10\sqrt{R}} \|\partial_v g_s\|_{L^2([T_1,T_2]\times P_{2R})}
%				\|g_s\|_{L^2([T_1,T_2]\times P_{2R})}\dr
			\lesssim \sqrt{R} \lbr g_s\rbr_{H^1_\kin}
				\|g_s\|_{L^2}.
	\end{split}
\ee
%In the second inequality, we used $\chi_{\cP_R}(\cdot,\cdot-re_1) \lesssim \chi_{\cP_R}$, which follows from the monotonicity of $\chi_{\cP_R}$ discussed below~\eqref{e.chi_cP_R2}. 

We now consider $I_2$.  We first change the order of integration and change variables:
\be
	\begin{split}
		-\frac{1}{2}I_2
			&= \int_0^{2R} \int_{T_1}^{T_2}\int_{\H\times \R^d}(g_s\, Yg_s)(t-r, x-rv_R, v_R) \chi_{\cP_R}(x,v)^2 \dt \dx \dv \dr
			\\
			&= \int_0^{2R} \int_{T_1-r}^{T_2-r}\int_{\H\times \R^d} (g_s\, Yg_s)(t, x, v) \chi_{\cP_R}(x+rv_R, v - 10 \sqrt R e_1)^2 \dt \dx \dv \dr.
	\end{split}
\ee
In the second equality, we used that $g_s(t,x,v) \equiv 0$ for $x_1 < 0 < v_1$ and $\chi_{\cP_R}(x,v- 10\sqrt R e_1) \equiv 0$ if $v_1 \leq 0$ (see \Cref{l.cP_R_bounds} and~\eqref{e.chi_cP_R1}).  
We now use that $H^{-1}_v$-$H^1_v$ pairing of $Yg_s$ with $g_s \chi_{\cP_R}$.  We find
\be\label{e.c030501}
	\begin{split}
		|I_2|
			&\lesssim \int_0^{2R} \lbr g_s\rbr_{H^1_\kin}  \| \nabla_v\left( g_s \chi_{\cP_R}(\cdot + r v_R, \cdot - 10\sqrt{R}e_1)\right)\|_{L^2([T_1,T_2]\times \H\times \R^d)} \dr
			\\&
			\lesssim \lbr g_s\rbr_{H^1_\kin}
%				\\&\qquad	\cdot 
				\int_0^{2R} \left(
					\| \nabla_v g_s\|_{L^2}
					+ \|g_s\|_{L^2}
					\left( \frac{r}{R^{\sfrac32}} + \frac{1}{\sqrt R}\right)\right)
					 \dr
			\\&
			\lesssim R \lbr g_s\rbr_{H^1_\kin}^2
				+ \sqrt R \lbr g_s\rbr_{H^1_\kin} \|g_s\|_{L^2}.
	\end{split}
\ee
where the second inequality follows from a simple computation of $\|\nabla_v\chi_{\cP_R}(\cdot + s v_R,v_R)\|_{L^\infty}$  using \eqref{e.chi_cP_R2}.
% and the fact that
%\be
%	\chi_{\cP_R}(\cdot + s v_R,\cdot - 10\sqrt{R}e_1) \lesssim \chi_{\cP_R}.
%\ee
%This follows from the monotonicity of $\chi_{\cP_R}$ discussed below~\eqref{e.chi_cP_R2}. 
The combination of~\eqref{e.c030502} and~\eqref{e.c030501} finishes the proof.
%
%
%
%The proof is then finished by applying Young's inequality to the estimates~\eqref{e.c030502} and~\eqref{e.c030501} of $I_1$ and $I_2$, respectively.
\end{proof}

\subsubsection{The proof of the Nash-type inequality on $\cN_R$}
\label{s.Nash_proof}

\begin{proof}[Proof of \Cref{p.Nash}]
Our proof proceeds by an interpolation argument using a mollifier.  With this in mind, take any compactly supported, nonnegative, smooth function $\psi$ such that
\be\label{e.c050302}
	\int \psi(z) \dz = 1.
\ee
Up to translation and dilation, we may assume that
\be\label{e.c050301}
	\supp \psi \subset \{z \in (\sfrac12,1)\times \R^{2d} : d_\kin(0,z)\leq 1\}.
\ee
For $\eps\in (0, 1)$ to be chosen and any $s\in (0,\sqrt{\eps R}]$, define
\be\label{e.w04151}
	\begin{split}
		g_s(z)
			& = g * \psi_s(z)
			= \int g(z \circ \tilde z^{-1} ) \psi(\delta_{\sfrac1s}(\tilde z)) \frac{\dtz}{s^{4d+2}}
			= \int g(z\circ \delta_s(\tilde z)^{-1}) \psi(\tilde z) \dtz,
%%			\\&
%%			= \int \overline g(s, w, y) \psi\left( \frac{t-s}{(\sqrt{\eps R})^2}, \frac{x- y - (t-s) w}{(\sqrt{\eps R})^3}, \frac{v-w}{(\sqrt{\eps R})} \right) \frac{\dw\dy\ds}{(\sqrt{\eps R})^6}
%%			\\&
%%			= \int \overline g(t-(\sqrt{\eps R}) s^2, x - (\sqrt{\eps R})^3 y - (\sqrt{\eps R})^2 s(v-(\sqrt{\eps R}) w), v-(\sqrt{\eps R}) w) \psi(s,y,w) \dw\dy\ds,
%			\\&
%			=\int g(\tilde z) \psi\left( \delta_{\sfrac1s}(z \circ \tilde z^{-1}) \right) \frac{\dtz}{s^{4d+2}}
%,
	\end{split}
\ee
where, for all $z$,
\be
	\psi_s(z) := \frac{1}{s^{4d+2}}\psi\left( \delta_{\sfrac1s}(z)\right)
	\qquad\text{ satisfies }
	\quad
	\int \psi_s \dw\dy\ds = 1
\ee
by~\eqref{e.c050302} and a standard change of variables.  Clearly, $g_s \to g$ as $s\to0$.

We note that the order of convolution does not matter here; in our arguments, only the scaling plays an important role.  Indeed, one could argue similarly using $g_s = \psi_s * g$ instead.

For later, we note that, recalling the definition in~\eqref{e.defQ},
\be\label{e.c050401}
	(\supp \psi_s)^{-1}
		\subset \{ z \in [0,s] \times \R^{2d} : d_\kin(0,z) \leq s\}^{-1}
		= \overline Q_s.
\ee

In order to localize $g$ to the domain $\cN_R$, we use a cutoff function $\chi_{\cN_R}$ such that
\be\label{e.chi_cN_R1}
	\chi_{\cN_R} \equiv 1 \qquad \text{ in } \cN_R
	\quad\text{ and }\quad
	\chi_{\cN_R} \equiv 0 \qquad \text{ in } \cN_{\sfrac{3R}{4}}^c,
\ee
while
\be\label{e.chi_cN_R2}
	\|\nabla_v \chi_{\cN_R}\|_{L^\infty} \lesssim \frac{1}{\sqrt R}
	\quad\text{ and }\quad
	\|\nabla_x \chi_{\cN_R}\|_{L^\infty} \lesssim \frac{1}{R^{\sfrac32}}.
\ee
This can be constructed exactly as in~\eqref{e.chi_cP_R1}-\eqref{e.chi_cP_R3} for $\chi_{\cP_R}$.

Before embarking on the estimate, let us understand the supports of the various functions.  First, clearly, up to decreasing $\eps$,
\be
	\supp\left(\chi_R\, g_{\sqrt{\eps R}}\right)
		\subset \cN_{\sfrac{3R}{4}}
\ee
and
\be
	\int_{T_1}^{T_2} \int_{\cN_R} g^2 \dz
		\leq \int_{\Omega_R} \chi_R\,  g^2 \dz
\ee
due to \eqref{e.chi_cN_R1}-\eqref{e.chi_cN_R2}.  Here we have made the change of notation to
\be\label{e.c050707}
	\Omega_R := [T_1 - \eps R, T_2]\times \cN_{\sfrac{R}{2}}
\ee
for simplicity.  Hence, we have
\be\label{e.w06301}
	\int_{T_1}^{T_2} \int_{\cN_R} g^2 \dz
		\leq \int_{T_1}^{T_2} \int \chi_R g_{\sqrt{\eps R}}^2 \dz
			+ \int_{T_1}^{T_2} \int \chi_R (g^2 - g_{\sqrt{\eps R}}^2) \dz
		=: I_1 + I_2.
\ee
Our goal is to show that
\be\label{e.c050303}
	I_1 \lesssim  \frac{1}{(\eps R)^{2d+1}} \|g\|_{L^1(\Omega_R)}^2,
\ee
and
\be\label{e.c050304}
	I_2 \lesssim \sqrt{\eps R} \lbr g\rbr_{H^1_{\rm kin}(\Omega_R)}
			\|g\|_{L^2(\Omega_R)}.
\ee
Indeed, this would complete the proof.

\medskip
\noindent {\bf \# Step one: applying Young's convolution inequality  to $I_1$.}
We begin by analyzing $I_1$, which is the simpler of the two cases.  Here, we simply use the kinetic version of Young's inequality for convolutions: \Cref{l.Youngs}.  Indeed, we have
\be
	I_1
		\leq \|g*\psi_{\sqrt{\eps R}}\|_{L^2([T_1,T_2]\times \cN_{\sfrac{3R}{4}})}^2
		\leq \|g\|_{L^1(([T_1\times T_2]\times \cN_{\sfrac{3R}{4}})\circ Q_{\sqrt{\eps R}})}^2 \|\psi_{\eps R}\|_{L^2}^2,
\ee
where we used~\eqref{e.c050401} to analyze the support of $\psi_{\sqrt{\eps R}}$.  Using only the definition of $N_\cdot$ and the triangle inequality for $d_\kin$, it is easy to see that, up to decreasing $\eps$, we have
\be
	\left(\left([T_1,T_2]\times \cN_{\sfrac{3R}{4}}\right) \circ Q_{\sqrt{\eps R}}\right)
		\subset [T_1 - \eps R, T_2]\times \cN_{\sfrac{R}{2}}
		= \Omega_R,
\ee
as desired.  Additionally, a straightfoward computation yields
\be
	\|\psi_{\sqrt{\eps R}}\|_{L^2}^2
		\lesssim \frac{1}{(\eps R)^{2d + 1}}.
\ee
Hence,~\eqref{e.c050303} is proved.

\medskip
\noindent {\bf \# Step two: rewriting $I_2$ as a series of integrals.}
Let us alter our notation:
\be
	g_{a,b,c}(z)
		:= \int \underbrace{g\big( z \circ (0,0, a(-\tilde v+ \sfrac{\tilde x}{\tilde t})) \circ (-b\tilde t,0,0) \circ (0, 0, \sfrac{-c\tilde x}{\tilde t})\big)}_{=: g_{a,b,c}(z;\tilde z)}
			\psi(\tilde z) \dtz.
\ee
In the sequel, this is useful because the first and third group actions correspond to shifts in $v$, which are represented in the $H^1_\kin$-norm by the $L^2$-norm of $\nabla_v g$, while the second group action corresponds to a shift along transport, which is represented in the $H^1_\kin$-norm by the $L^2_{t,x}H^{-1}_v$-norm of $Yg$.  

We clearly have
\be
	g_{0,0,0}(z) = g(z)
		\quad\text{ and }\quad
	g_{s,s^2,s}(z)
		= g_s(z).
\ee
The fundamental theorem of calculus then yields
\be\label{e.c050708}
	\begin{split}
		g(z)^2 - g_{\sqrt{\eps R}}(z)^2
			= &-2 \int_0^{\sqrt{\eps R}} g_{a,0,0}(z) \partial_a g_{a,0,0}(z)  \,da
				\\&
				- 2\int_0^{\eps R}  g_{\sqrt{\eps R},b,0}(z) \partial_b g_{\sqrt{\eps R},b,0}( z)  \,db
				\\&
				- 2\int_0^{\sqrt{\eps R}} g_{\sqrt{\eps R},\eps R,c}(z) \partial_c g_{\sqrt{\eps R};\eps R,c}(z)   \,dc.
	\end{split}
\ee

Let us write $I_2$, defined in~\eqref{e.w06301}, as
\be
	I_2 = I_{21} + I_{22} + I_{23},
\ee
where each of $I_{2k}$ above corresponds, respectively, to a term in~\eqref{e.c050708}.

\medskip
\noindent
{\bf \# Step three: bound $I_{21}$.}
This case is simple.  It follows by, in turn, directly computing the $a$ derivative in~\eqref{e.c050708}, changing the order of integration, using the Cauchy-Schwarz inequality, and noticing that
\be
	\left| - \tilde v + \frac{\tilde x}{\tilde t}\right| \lesssim 1,
\ee
due to the choice~\eqref{e.c050301} of the support of $\psi$.
Indeed, we find:
\be\label{e.c050709}
	\begin{split}
		I_{21}
			&= -2 \int_{T_1}^{T_2} \int \chi_R(z) \int_0^{\sqrt{\eps R}} \int g_{a,0,0}(z;\tilde z) \partial_a g_{a,0,0}(z;\tilde z) \psi(\tilde z) \dtz \,da \dz
			\\&
			= -2 \int_{T_1}^{T_2} \int \chi_R(z) \int_0^{\sqrt{\eps R}} \int g\left(t,x,v + a\left(-\tilde v + \frac{\tilde x}{\tilde t}\right)\right)
			\\&\hspace{1.85in} \left(-\tilde v + \frac{\tilde x}{\tilde t}\right)\cdot \nabla_v g\left(t,x,v + a\left(-\tilde v + \frac{\tilde x}{\tilde t}\right)\right) \psi(\tilde z) \dtz \,da \dz
			\\&
			= -2\int_0^{\sqrt{\eps R}} \int \int_{T_1}^{T_2} \int \chi_R(x,v + a\tilde v - \frac{a\tilde x}{\tilde t}) g(z) \left(-\tilde v + \frac{\tilde x}{\tilde t}\right) \cdot\nabla_v g(z) \psi(\tilde z) \dz \dtz \, da
			\\&
			\lesssim \int_0^{\sqrt{\eps R}} \int \int_{T_1}^{T_2} \int \|g\|_{L^2(\Omega_R)} \|\nabla_v g\|_{L^2(\Omega_R)} \psi(\tilde z)  \dtz \, da
			\\&
			\leq \sqrt{\eps R}\|g\|_{L^2(\Omega_R)}\lbr g\rbr_{H^1_\kin(\Omega_R)}.
	\end{split}
\ee
In the first inequality, we used that
\be\label{e.c050710}
	[T_1,T_2]\times \supp_{x,v} \chi_R(\cdot,\cdot + a\tilde v - \sfrac{a\tilde x}{\tilde t})
		\subset [T_1-\eps R, T_2]\times \cN_{\sfrac{R}{2}}
		= \Omega_R.
\ee
The inclusion in the time variable is simply due to enlarging the domain.  We show the inclusion of the spatial and velocity variables by using the triangle inequality, the fact that $|a|\leq \sqrt{\eps R}$, and by decreasing $\eps$ if necessary.  Indeed, fix any point
\be
	(x,v) \in \supp \chi_R(\cdot, \cdot + a\tilde v)
\ee
and any point $z_\partial \in \R\times \partial \H \times \R^d$.   Then, by triangle inequality
\be
	\begin{split}
		d_\kin\left(z_\partial, (0,x,v)\right)
			&\geq d_\kin\left( z_\partial, (0,x,v+ a\tilde v - \sfrac{a\tilde x}{\tilde t})\right)
				- 	d_\kin\left((0,x,v+ a\tilde v - \sfrac{a\tilde x}{\tilde t}), (0,x,v)\right)
			\\&
			\geq \sqrt\frac{3R}{4}
				- \VERT (0,-x,-v)\circ (0,x,v+ a\tilde v - \sfrac{a\tilde x}{\tilde t}) \VERT
			\\&
			\geq \sqrt \frac{3R}{4}
				- \VERT (0,0,a\tilde v - \sfrac{a\tilde x}{\tilde t})\VERT
			=
			 \sqrt\frac{3R}{4}
				- a\VERT (0,0,\tilde v - \sfrac{\tilde x}{\tilde t})\VERT.
	\end{split}
\ee
The conclusion then follows by using that $|a| \leq \sqrt{\eps R}$ and the choice~\eqref{e.c050301} of support of $\psi$, which makes $\VERT (0,0,\tilde v - \sfrac{\tilde x}{\tilde t})\VERT \lesssim 1$ uniformly over the support of $\psi$.

A bound of the type~\eqref{e.c050304} then follows from applying Young's inequality to in the last line of~\eqref{e.c050709}.

\medskip
\noindent
{\bf \# Step four: bound $I_{22}$.}
This is the most difficult term as it involves because it requires arguing by the $H^{-1}_v$-$H^1_v$ pairing.  To access this, we begin by directly computing the $b$ derivative in~\eqref{e.c050708}, changing the order of integration, and then changing variables:
\be
	\begin{split}
		I_{22}
			&= 2 \int_{T_1}^{T_2} \int \chi_R(z) \int_0^{\eps R} \int g_{\sqrt{\eps R},b,0}(z;\tilde z) \partial_b g_{\sqrt{\eps R},b,0}(z;\tilde z) \psi(\tilde z) \dtz \,da \dz
			\\&
			= 2\int_0^{\eps R} \int \tilde t \psi(\tilde z) \int_{T_1}^{T_2} \int \chi_R(z)
%			\\&\qquad\qquad\cdot
				g\left[t-b \tilde t,x - b\tilde t\left(v + \sqrt{\eps R}(-\tilde v + \sfrac{\tilde x}{\tilde t})\right) , v + \sqrt{\eps R}(-\tilde v + \sfrac{\tilde x}{\tilde t})\right]
			\\&
				\qquad\qquad\times	\left(Y u\right)\left[t-b\tilde t,x - b\tilde t \left(v + \sqrt{\eps R}(-\tilde v + \sfrac{\tilde x}{\tilde t})\right) , v + \sqrt{\eps R}(-\tilde v + \sfrac{\tilde x}{\tilde t})\right]  \dz \dtz \,da
			\\&
			= 2\int_0^{\eps R} \int \tilde t \psi(\tilde z) \int_{T_1-b\tilde t}^{T_2- b\tilde t} \int \chi_R\left[x + b \tilde t \left(v + \sqrt{\eps R}(-\tilde v + \sfrac{\tilde x}{\tilde t})\right), v + \sqrt{\eps R}(-\tilde v + \sfrac{\tilde x}{\tilde t}) \right]
%			\\&\qquad\qquad\cdot
			\\&
			\hspace{2 in}
			g(z) (Yu)(z) \dz \dtz \, da.
	\end{split}
\ee
We momentarily simplify the notation for the cutoff term $\chi_R$, letting
\be
	\chi_R(x,v;b;\tilde z)
		= \chi_R\left(x +b \tilde t \left(v + \sqrt{\eps R}(-\tilde v + \sfrac{\tilde x}{\tilde t})\right) , v + \sqrt{\eps R}(-\tilde v + \sfrac{\tilde x}{\tilde t})\right).
\ee
Next, arguing as in the justification of~\eqref{e.c050710}, we see that
\be
	[T_1-b \tilde t, T_2 - b \tilde t] \times \supp_{x,v} \chi_R(\cdot,\cdot; b; \tilde z)
%		\subset [T_1 - \eps R, T_2]\times \cN_{\sfrac{R}{2}}
		\subset \Omega_R,
\ee
for any $\tilde z \in \supp \psi$ and any $b\in[0,\eps R]$.  We recall the choice of support of $\psi$ in~\eqref{e.c050301}.
Hence, using the $H^{-1}_v$-$H^1_v$ pairing, we have
\be
	\begin{split}
		I_{22}
			&\lesssim
			\int_0^{\eps R}
			\int \psi(\tilde z)
				\left\|
					\nabla_v\left(
						g\,
						\chi_R(\cdot,\cdot;b;\tilde z)\right)
					\right\|_{L^2(\Omega_R)}
				\lbr g \rbr_{H^1_\kin(\Omega_R)}\dtz \, da.
	\end{split}
\ee
A direct computation using~\eqref{e.chi_cN_R2}, it is easy to see that
\be
	\begin{split}
		\left\|	\nabla_v\left(g\,\chi_R(\cdot,\cdot;b;\tilde z)\right)\right\|_{L^2(\Omega_R)}
			&\lesssim
				\left\|\nabla_v g\right\|_{L^2(\Omega_R)}
				+ \left( \frac{b}{R^{\sfrac32}} + \frac{1}{\sqrt R} \right)\left\|g\right\|_{L^2(\Omega_R)}
			\\&
			\lesssim
			\lbr g \rbr_{H^1_\kin(\Omega_R)}
				+ \left( \frac{b}{R^{\sfrac32}} + \frac{1}{\sqrt R} \right)\left\|g\right\|_{L^2(\Omega_R)}.
	\end{split}
\ee
Hence,
\be
	\begin{split}
		I_{22}
			&\lesssim
			\int_0^{\eps R}
			\int \psi(\tilde z)
				\left(\lbr g \rbr_{H^1_\kin(\Omega_R)}
				+ \left( \frac{b}{R^{\sfrac32}} + \frac{1}{\sqrt R} \right)\left\|g\right\|_{L^2(\Omega_R)}\right)
				\lbr g \rbr_{H^1_\kin(\Omega_R)}\dtz \, db
			\\&
			\lesssim 
				\eps R \lbr g \rbr_{H^1_\kin(\Omega_R)}
				+ \sqrt{\eps R} \left\|g\right\|_{L^2(\Omega_R)}\lbr u \rbr_{H^1_\kin(\Omega_R)}.
	\end{split}
\ee
Again, the proof is then finished after applying Young's inequality.

\medskip
\noindent
{\bf \# Step five: bound $I_{23}$.}
The proof of this is exactly the same as the proof of the bound of $I_{21}$ in step two.  Indeed, this is only a shift in $v$, which is the simplest case.  As such, we omit it.  This concludes the proof of~\eqref{e.c050301} and, thus, \Cref{p.Nash}.
\end{proof}

\subsubsection{The proof of the Poincar\'e-type inequality on $\cO_R$}

While $\cP_R$ and $\cN_R$ are, respectively, increasing and decreasing in $R$, $\cO_R$ is not monotonic in $R$.  This monotonicity was useful in constructing cutoff functions.  In this case, we must define, for any $R'>0$,
\be
	\Theta_{R'} = \{(x,v) \in (\H \times \R^d) \setminus \cP_{\sfrac{R'}{2}} : \dist(\R\times \gamma_+) \leq \sqrt{\sfrac{R'}{10}}\}.
\ee
In the proof of \Cref{p.outgoing}, we use a cutoff function on that is one on $\cO_R$ and zero on $\Theta_{2R}^c$.  A key aspect of the proof is working with the ratio $\sfrac{x_1}{|v_1|}$, so we state a lemma to bound that now.  The proof is postponed to \Cref{s.technical}.
\begin{Lemma}\label{l.cO_R_bounds}
	Fix any $(x,v) \in \Theta_R$.  Then
	\be
		\frac{x_1}{|v_1|}
			\leq \frac{R}{4}.
	\ee
\end{Lemma}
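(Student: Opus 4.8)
The plan is to reduce the statement to two elementary bounds on the kinetic distance, each read off directly from the definitions~\eqref{e.Galilean}--\eqref{e.norm} of $\circ$ and $\VERT\,\cdot\,\VERT$ together with the reformulation~\eqref{e.dist2} of distance to a set. (Since the infimum in~\eqref{e.dist2} need not be attained, I work throughout with near-optimal competitors and pass to a limit.) First I unpack the hypothesis $\dist(\R\times\gamma_+,(0,x,v))\le\sqrt{R/10}$. For $\eta>0$ pick $z'=(\sigma,\xi,\nu)$ with $(0,x,v)\circ z'\in\R\times\gamma_+$ and $\rho:=\VERT z'\VERT\le\sqrt{R/10}+\eta$, and let $w$ be a minimizer in the definition~\eqref{e.norm} of $\VERT z'\VERT$, so that $|\sigma|\le\rho^2$, $|\xi_1-\sigma w_1|\le\rho^3$, $|\nu_1-w_1|\le\rho$ and $|w_1|\le\rho$. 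Membership in $\R\times\gamma_+$ means $x_1+\xi_1+\sigma v_1=0$ and $v_1+\nu_1<0$; writing $\xi_1+\sigma v_1=(\xi_1-\sigma w_1)+\sigma(v_1+w_1)$ and using $x_1>0$ gives $x_1\le\rho^3+\rho^2(|v_1|+\rho)\le\rho^2|v_1|+2\rho^3$, and, if $v_1\ge0$, from $-2\rho\le w_1-\rho\le\nu_1<-v_1$ we get $v_1<2\rho$. Letting $\eta\to0$,
\[
  x_1\le\tfrac{R}{10}|v_1|+2\bigl(\tfrac{R}{10}\bigr)^{3/2},
  \qquad\text{and }\ v_1\le 2\sqrt{R/10}\ \text{ whenever }\ v_1\ge0.
\]

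Second, I bound $\dist(\R\times\gamma_-,(0,x,v))$ from above using the explicit competitor $z'=\bigl(0,\,-x_1e_1,\,(-v_1+\delta)e_1\bigr)$ with $\delta>0$ and $e_1$ the first basis vector: by~\eqref{e.Galilean}, $(0,x,v)\circ z'=\bigl(0,(0,\overline x),(\delta,\overline v)\bigr)\in\R\times\gamma_-$, and taking $w=0$ in~\eqref{e.norm} gives $\VERT z'\VERT\le\max\{x_1^{1/3},|v_1-\delta|\}$. Hence, letting $\delta\to0$,
\[
  \dist(\R\times\gamma_-,(0,x,v))\le\max\{x_1^{1/3},|v_1|\}.
\]

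Now I combine. Since $(x,v)\notin\cP_{R/2}$, the definition~\eqref{e.regions} gives $\dist(\R\times\gamma_-,(0,x,v))>\sqrt{R/2}$, so the second bound forces $\max\{x_1^{1/3},|v_1|\}>\sqrt{R/2}$. If $v_1\ge0$, the first display yields $x_1\le4(R/10)^{3/2}$, whence $x_1^{1/3}\le4^{1/3}\sqrt{R/10}$ and $v_1\le2\sqrt{R/10}$, and both are strictly below $\sqrt{R/2}$ (numerically $4^{1/3}/\sqrt{10}$ and $2/\sqrt{10}$ are $<1/\sqrt2$) — a contradiction. Hence $v_1<0$. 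The same dichotomy then forces $|v_1|>\sqrt{R/2}$: otherwise $x_1^{1/3}>\sqrt{R/2}$, i.e.\ $x_1>(R/2)^{3/2}$, which is incompatible with the first display once $|v_1|\le\sqrt{R/2}$. Finally, with $v_1<0$ and $|v_1|>\sqrt{R/2}$, the first display gives
\[
  \frac{x_1}{|v_1|}\le\frac{R}{10}+\frac{2(R/10)^{3/2}}{\sqrt{R/2}}
  =\frac{R}{10}\Bigl(1+\tfrac{2}{\sqrt5}\Bigr)<\frac{R}{4},
\]
which is the claim.

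The only step requiring genuine care is the first bound: unwinding the $\min_w\max\{\cdots\}$ form of $\VERT\,\cdot\,\VERT$ into scalar inequalities on the first components, and checking that the numerical constants leave room for the final chain of strict inequalities. There is in fact ample slack (any constant in $[\tfrac{4}{3\sqrt{10}},\tfrac{2}{\sqrt{10}}]$ would serve in place of $1/\sqrt2$), so a deliberately lossy treatment of the distances suffices; the only other subtlety is that the infima defining $\dist$ need not be attained, which is handled by the $\eta\to0$ and $\delta\to0$ limits above.
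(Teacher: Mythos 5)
Your proof is correct and follows the same two-step strategy as the paper's: use proximity to $\gamma_+$ to bound $x_1$ from above (where the paper invokes the reflected version of \Cref{l.cP_R_bounds} and you re-derive it directly from the definition of $\VERT\cdot\VERT$), then use the separation from $\gamma_-$ to conclude. The only notable differences are cosmetic but pleasant: your $\tau=0$ competitor $(0,-x_1e_1,(-v_1+\delta)e_1)$ decouples the $\gamma_-$-separation constraint into the clean dichotomy $\max\{x_1^{1/3},|v_1|\}>\sqrt{R/2}$ rather than the paper's combined $|x_1+Rv_1/4|\ge(R/2)^{3/2}$, and you explicitly rule out $v_1\ge 0$ at the outset, whereas the paper's Case~2 writes $|v_1|=-v_1$ without comment (its argument does still work when $v_1>0$, but only after also invoking the implicit bound $v_1\le 2\sqrt{R/10}$).
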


For the proof of \Cref{p.outgoing}, let us make the convention that every norm is taken on $[T_1, T_2+R]\times \H\times \R^d$ unless otherwise specified.  For example, by writing
\be
	\|g\|_{L^2}
		\quad\text{ we mean }\quad
	\|g\|_{L^2([T_1,T_2+R]\times \H\times \R^d)}.
\ee
This saves significant space and does not cost clarity.

\begin{proof}[Proof of \Cref{p.outgoing}]
We extend $g$ to $\overline g$  on $[T_1, T_2 + R]\times\R^d\times(\R_-\times\R^{d-1})$ as follows:  
\be
	\overline g(t,x,v)=
	\begin{cases}
		f(t,x,v) &\text{if $x_1\geq0$,}\\
		f(t-\sfrac{x_1}{v_1},(0,\overline x),v) &\text{if $x_1<0$,}
	\end{cases}
\ee
where $\overline{x}\coloneqq(x_2,\dots,x_d)$.
Notice that
\be\label{eq:ext_int}
	Y\overline f = 0
		\qquad\text{ for }
		x_1 < 0.
\ee

Next, we take a cutoff function $\chi_{\cO_R}$ such that
\be\label{e.chi_cO_R1}
	\begin{split}
		&\chi_{\cO_R} \equiv 1 \quad \text{ in } \cO_R,
		\qquad\text{ and }\qquad
%		\\&
		\chi_{\cO_R} \equiv 0 \quad \text{ in } \Theta_{2R},
			%\left\{(x,v) \in \H \times \R^d \setminus \cP_{\sfrac{R}{2}} : \dist(\R\times \gamma_+,(0,x,v)) \leq \sqrt R\right\},
	\end{split}
\ee
while
\be\label{e.chi_cO_R2}
	\|\nabla_v \chi_{\cO_R}\|_{L^\infty} \lesssim \frac{1}{\sqrt R}
	\quad\text{ and }\quad
	\|\nabla_x \chi_{\cO_R}\|_{L^\infty} \lesssim \frac{1}{R^{\sfrac32}}.
\ee
This can be constructed easily using the scaling properties in~\eqref{e.domain_scaling}; see the discussion around~\eqref{e.chi_cP_R1}-\eqref{e.chi_cP_R2}.  

%Therefore, setting $Y\coloneqq \partial_t + v\cdot \nabla_x$, we have that
%\be \label{eq:ext_int}
%	\int_0^{T} (Y f)(t+r,x+rv,v)  \,dr
%	=	
%	\int_0^{T'} (Y \overline f)(t+r,x+rv,v)  \,dr
%\ee%
%for any $(t,x,v)\in [R,+\infty)\times \cO_R, T<T'$ such that $-T v_1 =x_1$. 
%Indeed, we have that $(Y \overline f)(t+r,x+rv,v)=0$ for any $r\in[T,T']$.

By the fundamental theorem of calculus we have that, for any $z\in [T_1,T_2]\times \Theta_{2R}$, 
\be\label{eq:fund_theorem_O}
\begin{split}
	g(t,x,v)^2
		=& -2\int_0^{\frac{x_1}{|v_1|}} g(t+r,x+rv,v) Yg(t+r,x+rv,v) \dr 
			+ g(t+\sfrac{x_1}{|v_1|},(0,\overline x),v)^2 \\
		=& -2\int_0^{R} \overline g(t+r,x+rv,v) Y\overline{g}(t+r,x+rv,v) \dr
			+ g(t+\sfrac{x_1}{|v_1|},(0,\overline x),v)^2 
\ ,
\end{split}
\ee 
where the second equality follows from \eqref{eq:ext_int} and \Cref{l.cP_R_bounds}. 
Therefore,
\be
\begin{split}
	\int_{T_1}^{T_2} \int_{\cO_R} &g(z)^2 \dz
		\leq	\int_{T_1}^{T_2} \int_{\H\times\R^d} \chi_{\cO_R} g(z)^2 \dz
		\\=&
			-2\int_{T_1}^{T_2} \int_{\H\times\R^d} \int_0^{R} g(t+r,x+rv,v) Y\overline{g}(t+r,x+rv,v) \dr \dx \dv \dt\\
			&
			+\int_{T_1}^{T_2} \int_{\H\times\R^d} \chi_{\cO_R}(x,v) f(t+\sfrac{x_1}{|v_1|},(0,\overline x),v)^2  \dx \dv \dt
			= I_1 + I_2 ,
\end{split}
\ee
where we passed from the first to the second line rewriting $f(t,x,v)^2$ according to \eqref{eq:fund_theorem_O}. 

We first see, by a change of variables, that 
\be
\begin{split}
	I_1
%^		=& -2 \int_0^{R} \int_{T_1}^{T_2}  \int_{\H\times\R^d} \chi_{\cO_R}(x,v)  \overline f(t+r,x+rv,v) Y\overline{f}(t+r,x+rv,v)  \dx \dv \dt \dr
		=& -2\int_0^{R} \int_{T_1+r}^{T_2+r}  \int_{\R^d} \int_{\R_- \times \R^{d-1}} \chi_{\cO_R}(x-rv,v)  \overline g(t,x,v) Y\overline{g}(t,x,v)  \dx \dv \dt \dr
		\\=&
		-2\int_0^{R} \int_{T_1+r}^{T_2+r}  \int_{\H\times\R^d} \chi_{\cO_R}(x-rv,v)  g(t,x,v) Yg(t,x,v) \dx \dv \dt \dr .
\end{split}
\ee
In the second equality, we used~\eqref{eq:ext_int} again to reduce the domain of the integral.

Let $\bar\chi_{\cO_R}(r,x,v) = \chi_{\cO_R}(x-rv,v)$. 
Then, by the $H^{-1}_v$-$H^{1}_v$ pairing, we get
\be
\begin{split}
	|I_1|
		\lesssim&  \int_0^{R}
			\|\nabla_v\(\bar\chi_{\cO_R} g\)\|_{L^2([T_1+r,T_2+r]\times \H \times \R^d)}
			\lbr g\rbr_{H^1_\kin([T_1+r,T_2+r]\times \H \times \R^d)} dr\\
	\lesssim& \lbr f\rbr_{H^1_\kin} 
		\int_0^{R} \bigg(\(\frac{1}{\sqrt{R}}+ \frac{r}{R^{\sfrac 3 2}}\)\|g\|_{L^2}
		+\|\chi_{\cO_R} \nabla g\|_{L^2}\bigg)dr\\
	\lesssim& \sqrt{R} \lbr g\rbr_{H^1_\kin}\|f\|_{L^2}+R\lbr g\rbr_{H^1_\kin}^2.
\end{split}
\ee
In the second line follows from estimates \eqref{e.chi_cO_R2}.

We now estimate $I_2$. Changing variables yields
\be
\begin{split}
I_2 
%& \int_\R \int_{\R_+} \chi_{\cO_R}(x,v) \int_{T_1}^{T_2} f(t+\sfrac{x}{|v|},\overline{x},v)^2 \dt \dx \dv \\ 
%%	=&  \int_{-\infty}^{-1} \int_{0}^{R|v|} \int_{T_1}^{T_2} \chi_{\cO_R}(x,v) f(t+{x}/{|v|},\overline{x},v)^2 \dt \dx \dv  
%\intertext{(by change of variables)}
	=& \int_{\H\times\R^d} \chi_{\cO_R}(x,v) \int_{T_1+\sfrac{x_1}{|v_1|}}^{T_2+\sfrac{x_1}{|v_1|}} g(t,(0,\overline x),v)^2 \dt \dx \dv 
% 	=&  \int_{-\infty}^{-1} \int_{0}^{R|v|} \int_{T_1-{x}/{|v|}}^{T_2-{x}/{|v|}}  f(s,\overline{x},v)^2 ds \ dx \ dv 
%\intertext{(being the integrand positive)}
	\\\leq&
		\int_{\H\times\R^d} \chi_{\cO_R}(x,v) \int_{T_1}^{T_2+R} g(t,(0,\overline x),v)^2 \dt \dx \dv
	\\=&
		\int_{T_1}^{T_2+R}  \int_{\R^d} \int_{\R^{d-1}} \chi_{\cO_R}(x,v)x_1 g(t,(0,\overline x),v)^2 \, d\overline x \dv \dt,
\end{split}
\ee
where we used \Cref{l.cO_R_bounds} and the fact that the integrand is positive to obtain the second line. 

Applying once again \Cref{l.cO_R_bounds}, we find
\be
	I_2
		\leq \frac{R}{4} \int_{T_1}^{T_2+R}  \int_{\R^d} \int_{\R_- \times \R^{d-1}}  |v_1| g(t,(0,\overline x),v)^2 d\overline x \dv \dt.
\ee
This concludes the proof. %Since $\supp \chi_{\cO_R} \subseteq \tilde{O}_{R,2}$, we have that $v_1\in(\-\infty,2\sqrt{R})$ and $x_1\in(0,R|v|)$ (see Remark \ref{rem:tildeO}) 
%and then
%\be
%\begin{split}
%|I_2| 	\leq& \int_{(-\infty,2\sqrt{R}]\times\R^{d-1}} \int_{[0,2R|v_1|]\times\R^{d-1}} \int_{T_1}^{T_2+R} f(t,\overline{x},v)^2 \dt \dx \dv \\
% 	\leq& R \int_{T_1}^{T_2+R} \int_{\R_-\times\R^{d-1}} \int_{\R^{d-1}}|v_1| f(t,\overline{x},v)^2  d\overline{x} \ dv \ dt
%	\ ,
%\end{split}
%\ee
%using the abuse of notation $d\overline{x}=d(x_2,\dots,x_d)$ and the fact that $f(t,x,v)=0$ if $x_1=0$ and $v_1>0$.
%
%Then, combining the estimates for $I_1$ and $I_2$ (and applying Young inequality) we get
%\be
%\begin{split}
%	\int_{T_1}^{T_2} \int_{\cO_R} f(t,x,v)^2 dx \ dv \ dt
%\leq	&R\lbr f\rbr_{H^1_\kin([T_1,T_2+R]\times \H \times \R^d)}^2  \\
%	&+  R \int_{T_1}^{T_2+R} \int_{\R_-\times\R^{d-1}} \int_{\R^{d-1}}|v_1| f(t,\overline{x},v)^2  d\overline{x} \ dv \ dt
%	\ .
%\end{split}
%\ee
\end{proof}

\section{Controlling \texorpdfstring{$f$}{f} on the isolated region: \texorpdfstring{\Cref{l.isolated_estimate}}{Lemma \ref{l.isolated_esimate}}}\label{s.zeta}

We begin with a lemma that is simple to prove.  It essentially says if a point $(x,v)$ is distance $\rho$ to the boundary, then the path from $(x,v)$ to the boundary that simply follows transport (without any changes in velocity) has to take at least time $\rho$.
\begin{Lemma}\label{l.transport}
	If $(x,v) \in \cN_R$, then
	\be
		x_1
			\geq \frac{R}{10} |v_1|.
%		\frac{x_1}{|v_1|} \geq \frac{R}{10}.
	\ee
\end{Lemma}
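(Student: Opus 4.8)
The plan is to unwind the definition of $\cN_R$ using the alternative formula~\eqref{e.dist2} for the distance to a set, exhibiting the explicit ``pure transport'' path from $(0,x,v)$ to the boundary set $\R\times\partial\H\times\R^d = \{z : x_1 = 0\}$.

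If $v_1 = 0$ the inequality is immediate because $x_1 > 0$ in $\H$, so I would assume $v_1 \neq 0$ and set $z' = (-x_1/v_1,\, 0,\, 0)$. Using the group law~\eqref{e.Galilean}, one has $(0,x,v)\circ z' = \bigl(-x_1/v_1,\; x - (x_1/v_1)v,\; v\bigr)$, whose first physical-space coordinate is $x_1 - (x_1/v_1)v_1 = 0$; hence $(0,x,v)\circ z' \in \R\times\partial\H\times\R^d$, and~\eqref{e.dist2} gives $\dist(\R\times\partial\H\times\R^d, (0,x,v)) \leq \VERT z'\VERT$.

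Next I would estimate $\VERT z'\VERT$ directly from the definition~\eqref{e.norm}: choosing $w = 0$ in the minimum kills the $|x' - t'w|^{1/3}$, $|v' - w|$ and $|w|$ terms (since $x' = v' = 0$), leaving $\VERT z'\VERT \leq |x_1/v_1|^{1/2} = \sqrt{x_1/|v_1|}$. Combining this with the defining property $\dist(\R\times\partial\H\times\R^d,(0,x,v)) \geq \sqrt{R/10}$ of $\cN_R$ yields $\sqrt{x_1/|v_1|} \geq \sqrt{R/10}$, that is, $x_1 \geq \tfrac{R}{10}|v_1|$.

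There is essentially no serious obstacle here; the only points requiring a little care are verifying that the chosen $z'$ genuinely lands on the boundary hyperplane under the noncommutative action $\circ$, and noting that $w = 0$ is the optimal choice in the definition of $\VERT\cdot\VERT$ precisely because then the transport term $|x' - t'w|^{1/3}$ contributes nothing — which is the quantitative content of the ``pure transport'' heuristic stated just before the lemma.
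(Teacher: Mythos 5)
Your proof is correct and takes essentially the same approach as the paper: exhibit a pure-transport group element $z'$ carrying $(0,x,v)$ to the hyperplane $\{x_1=0\}$, then use~\eqref{e.dist2} and the definition~\eqref{e.norm} with $w=0$ to bound $\VERT z'\VERT$ by $\sqrt{x_1/|v_1|}$. (Your choice $z' = (-x_1/v_1,0,0)$ is in fact slightly cleaner than the paper's displayed $\zeta$, which appears to carry a sign typo in the time coordinate that would otherwise make its $\xi$-component nonzero.)
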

\begin{proof}
	This is trivially true if $v_1 = 0$, so we assume that $v_1 \neq 0$.  Recall from~\eqref{e.regions}, that
	\be
		\dist(\R \times \partial \H \times \R^d, (0,x,v)) \geq \sqrt{\sfrac{R}{10}}.
	\ee
	In view of~\eqref{e.dist2}, it follows that
	\be
		\sqrt{\tfrac{R}{10}}
			\leq \VERT \zeta \VERT
		\qquad\text{ where }
		\zeta
			= (\tau, \xi, \omega)
			= \left( \frac{x_1}{v_1}, (-x_1 - \tau v_1, 0), 0\right).
	\ee
	Unpacking the definition of $\VERT\cdot\VERT$ with the choice $w=0$, we deduce that
	\be
		\Big(\frac{R}{10}\Big)^{\sfrac12}
			\leq |\tau|^{\sfrac12},
	\ee
	which is precisely the claim.
\end{proof}

The proof of \Cref{l.isolated_estimate} is fairly straightforward, if tedious.
\begin{proof}[Proof of \Cref{l.isolated_estimate}]
To simplify the notation in this proof, let us set
\be
	\overline R = \frac{R}{10}.
\ee
For $\psi$ and $E$ to be chosen, we define
\be\label{e.zeta}
	\tilde \mu_R(x,v)
		= E \left( - \frac{\overline Rv_1}{x_1} \right)
			\psi\left( - \frac{v_1^3}{x_1}\right)
			\psi\left( - \frac{2v_1}{\sqrt{\overline R}} - 1\right).
\ee
Below, it will be helpful to suppress the arguments, while keeping track of the three individual functions that make up $\tilde \mu_R$.  To this end, we write
\be
	\tilde \mu_R(x,v)
		= E \psi \hat \psi,
\ee
where $\psi$ is shorthand for $\psi(-\sfrac{v_1^3}{x_1})$ and $\hat \psi$ is shorthand for $\psi(-\sfrac{2v_1}{\sqrt {\overline R}} - 1)$.

To aid the reader, let us  stress that, in all nontrivial cases in this proof, $v_1 < 0$.  Thus, $-v_1$, $-v_1^3$, etc. are {\em positive} quantities.

Take $E$ to be a decreasing function
\be\label{e.c041902}
	E(\zz) = 1 \quad\text{ if } \zz\leq 1,
	\qquad\tand\qquad
	E(\zz) \approx e^{-\zz}
		\quad\text{ if } \zz\geq 0
\ee
such that
\be\label{e.c041901}
	E'' + E' \leq 0
		\qquad\text{ for all } \zz \geq 0.
\ee
Moreover, we may take $E$ such that
\be
	|E''|, |E'| \lesssim E.
\ee
Roughly, $E$ is a mollification of $\min\{1,e^{1-\zz}\}$. 
This is somewhat simple to construct, so we omit its proof. 
%Indeed, this holds pointwise apart from $z=1,2$.  Near $z=1$, it is decreasing and concave, whence~\eqref{e.c041901} holds.  Near $z=2$, $E$ is $C^1$, so that there is no $\delta$ component of $E''$ and we can ignore this measure zero part.  If we want it to be smooth, we can convolve it...
Additionally, we let $\psi$ be any increasing function such that
\be
	\psi(\zz)
		= \begin{cases}
			1 \qquad
				& \text{ if } \zz \geq 1,\\
			0 \qquad
				& \text{ if } \zz \leq \sfrac1{2}.
		\end{cases}
\ee

Let us first check that, with these choices, $\tilde \mu_R$ satisfies~\eqref{e.c061804}.  This is straightforward, except for the first case when $(x,v) \in \cN_R \cap \{x_1 \leq - v_1^3\}$.  Clearly $\psi$ satisfies the correct bound.  From \Cref{l.transport}, we have that $x_1 \leq \overline R |v_1|$.  This implies that
\be
	E\left( - \sfrac{v_1 \overline R}{x_1}\right)
		\geq E(1)
		\approx 1.
\ee
Finally, we notice that
\be\label{e.c061806}
	\overline R \leq \frac{x_1}{|v_1|}
		\leq \frac{|v_1|^3}{|v_1|}
		= |v_1|^2.
\ee
It follows that $\hat \psi = 1$.

We now need to show the main estimate~\eqref{e.c061805}. This proceeds by considering each of the subdomains on which $\mu_R$ is nonzero one at a time.  Given its definition~\eqref{e.zeta}, there are eight cases to check: there are three functions, each having one region where it takes the constant value one and one region where it varies.
%  Let us note that we do not need to check either of the cases
%\be
%	\frac{2|v|}{\sqrt R} -1 \leq \frac{1}{2}
%		\quad\text{ or }\quad
%	\frac{|v|^3}{x} \leq \frac{1}{2}.
%\ee
%because here $\chi \equiv 0$.  Additionally, we already understand the case
%\be
%	\frac{|v|R}{x} \leq 1,
%		\quad
%	\frac{2|v|}{\sqrt R} -1 \geq 1,
%		\quad\text{ and }\quad
%	\frac{|v|^3}{x} \geq 1,
%\ee
%because then $\chi \equiv 1$.  
%We now check the remaining nine cases.

\medskip
\noindent{\bf \# Case one:}
		\be
			\frac{-\overline R v_1}{x_1}
				\geq 1,
			\quad
			\frac{-v_1^3}{x_1}
				\geq 1,
			\quad\text{ and } \quad
			\frac{2 v_1}{\sqrt{\overline R}} - 1 \geq 1.
		\ee
		Let us note that the last inequality yields
		\be\label{e.c061807}
			v_1^2 \geq \overline R.
		\ee
		In this case, letting $\zz = \tfrac{-\overline R v_1}{x_1}$,
		\be
			\begin{split}
				\Delta_v \chi + v\cdot\nabla_x \tilde \mu_R
					& = \frac{\overline R^2}{ x_1^2} E'' + \frac{\overline Rv_1^2}{x_1^2} E'
					= \frac{z^2}{v_1^2} \left(E'' + \frac{v_1^2}{\overline R} E'\right)
					\\&
					\leq \frac{z^2}{v_1^2} \left( E'' +  E'\right)
					\leq 0.
%					\\&= \frac{R^2 \alpha}{x^2} E \left( \alpha - \frac{v^2}{R}\right)
%					\leq 0,
			\end{split}
		\ee
		The second-to-last inequality follows by~\eqref{e.c061807} and the fact that $E' \leq 0$. The last inequality follows by~\eqref{e.c041901}. 
%		as long as we choose
%		\be
%			\alpha \leq 1.
%		\ee
		This is clearly~\eqref{e.c061805} in this case.

\medskip		
\noindent{\bf \# Case two:}
		\be\label{e.c061901}
			\frac{-\overline R v_1 }{x_1}
				\geq 1,
			\quad
			\frac{- v_1^3}{x_1}
				\geq 1,
			\quad\text{ and } \quad
			\frac{- 2v_1}{\sqrt {\overline R}} - 1 \in (\sfrac12, 1).
		\ee
		In this case,
		\be\label{e.c040302}
			\begin{split}
				\left(\Delta_v  + v\cdot\nabla_x \right) \tilde \mu_R
					&= \left(\frac{\overline R^2}{x_1^2} E'' \hat \psi + \frac{4 \sqrt{\overline R}}{x_1} E' \hat \psi' + \frac{4}{\overline R} E \hat \psi''\right)
						+ \left(\frac{v_1^2\overline R}{x_1^2} E' \hat \psi\right)
					\\&
					\leq \frac{\overline R^2}{x_1^2} E'' \hat \psi +  \frac{4}{\overline R} E \hat \psi''.
			\end{split}
		\ee
		In the inequality, we used that $E$ is decreasing, while $\hat \psi$ is increasing.
		
		Notice that for any $\eps >0$ and $z\geq 0$,
		\be
			1 \lesssim \frac{x_1}{|v_1|^3} e^{- \frac{\eps v_1^3}{x_1}}
			\quad\text{ and }\quad
			|E'(\zz)|, |E''(\zz)|, E(\zz) \lesssim e^{-\zz}.
		\ee
		We look only at the first term in~\eqref{e.c040302}; however, the second term is handled similarly.  Then,
		\be
			\begin{split}
				\frac{R^2}{x_1^2} E'' \hat \psi
					&\lesssim \frac{\overline R^2}{x_1^2} e^{\frac{v_1 \overline R}{x_1}}
					\lesssim \frac{\overline R^2}{x_1^2} e^{ \frac{v_1 \overline R}{x_1}} \Big( \frac{x_1^3}{|v_1|^9}e^{-\frac{\eps v_1^3}{x_1}}\Big)
					\\&
					= \frac{\overline R^{\sfrac{13}{4}}}{\overline R^{\sfrac54}} \frac{x_1}{|v_1|^9} e^{\frac{v_1\overline R}{x_1}} e^{-\frac{\eps v_1^3}{x_1}}
					\leq \frac{\overline R^{\sfrac{13}{4}}}{\overline R^{\sfrac54}} \frac{x_1}{|v_1|^9} e^{ \frac{v_1^3}{x_1}} e^{-\frac{\eps v_1^3}{x_1}}
					\\&
					\approx
					\frac{\overline R^{\sfrac{13}{4}}}{\overline R^{\sfrac54}} \frac{x_1}{\overline R^{\sfrac{13}{4}} |v_1|^{\sfrac52}} e^{ \frac{v_1^3}{x_1}} e^{-\frac{\eps v_1^3}{x_1}}
					\lesssim \frac{1}{\overline R^{\sfrac54}} \tilde \phi(x,v).
%					
%					
%					
%					\approx \frac{R^{\sfrac{13}4}}{R^{\sfrac54}} \frac{x}{|v|^9}  e^{- (\alpha-\eps) \frac{|v| R}{x}}
%					\\&
%					\lesssim \frac{|v|^{\sfrac{13}{2}}}{R^{\sfrac54}} \frac{x}{ |v|^{\sfrac{18}{2}}} e^{- (\alpha-\eps) \frac{|v|^3}{x}}
%					\lesssim \frac{1}{R^{\sfrac54}} \frac{x}{ |v|^{\sfrac52}} e^{- \frac{|v|^3}{9x}}
%					\approx \frac{1}{R^{\sfrac54}} \phi^*(x,v).
			\end{split}
		\ee
		In the  inequality on the second line and in ``$\approx$'' on the last line, we used that, by the third item in~\eqref{e.c061901},
		\be
			\frac{3\sqrt{\overline R}}{4}
				\leq -v_1
				\leq \sqrt{\overline R}.
		\ee
		This is clearly~\eqref{e.c061805} in this case.
		
\medskip
\noindent{\bf \# Case three:}
		\be
			- \frac{v_1 \overline R}{x_1} \geq 1,
				\quad
			-\frac{v_1^3}{x_1} \in (\sfrac12, 1)
				\quad\text{ and } \quad
			-\frac{2 v_1}{\sqrt{\overline R}} - 1 \geq 1.
		\ee
		We claim that this case cannot happen.  Indeed, the first and third inequalities above yield
		\be
			x_1 \leq -v_1 \overline R
				\leq -v_1^3,
		\ee
		while the second implies that
		\be
			x_1 > - v_1^3.
		\ee
		This is a contradiction.  Hence, case three cannot occur.

\medskip			
\noindent{\bf \# Case four:}
		\be
			-\frac{v_1 \overline R}{x_1}
				\geq 1,
			\quad
			-\frac{v_1^3}{x_1} \in (\sfrac12, 1),
			\quad\text{ and } \quad
			-\frac{2v_1}{\sqrt{\overline R}} - 1 \in (\sfrac12, 1).
		\ee
		This case involves the most derivatives since all three cutoff functions are varying.  That said, it is fundamentally the same as case two, while being slightly easier because
		\be
			x_1 \approx -v_1^3 \approx \overline R^{\sfrac32}
			\quad\text{ and }\quad
			\tilde \phi(x,v) \approx \sqrt{-v_1}.
		\ee
		As such, we omit its proof.

\medskip		
\noindent{\bf \# Case five:}
		\be
			- \frac{v_1 \overline R}{x_1}
				< 1,
			\quad
			-\frac{v_1^3}{x_1} \geq 1,
				\quad\text{ and } \quad
			-\frac{2v_1}{\sqrt{\overline R}} - 1 \geq 1.
		\ee
		This case is precisely when $\tilde \mu_R \equiv 1$.  Hence
		\be
			(\Delta_v + v\cdot\nabla_x) \tilde \mu_R = 0,
		\ee
		which clearly yields~\eqref{e.c061805} in this case.

\medskip		
\noindent{\bf \# Case six:}
		\be\label{e.c040304}
			- \frac{v_1 \overline R}{x_1}
				< 1,
			\quad
			-\frac{v_1^3}{x_1}
				\geq 1
			\quad\text{ and } \quad
			- \frac{2v_1}{\sqrt{\overline R}} - 1 \in (\sfrac12, 1).
		\ee
		This case cannot occur.  Indeed, the first and second inequalities in~\eqref{e.c040304} imply that
		\be
			-v_1 \overline R < x_1 \leq - v_1^3,
		\ee
		which implies that $v_1^2 > R$.  On the other hand, the last inequality in~\eqref{e.c040304} implies that
		\be
			-v_1 < \sqrt{\overline R}.
		\ee
		These are in contradiction (recall that $-v_1 \geq 0$).
				
\medskip		
\noindent{\bf \# Case seven:}
		\be
			-\frac{v_1 \overline R}{x_1}
				<  1,
			\quad
			-\frac{v_1^3}{x_1} \in (\sfrac12, 1)
				\quad\text{ and } \quad
			-\frac{2 v_1}{\sqrt{\overline R}} - 1 \geq 1.
		\ee
		From the first and second inequalities, we see that
		\be
			x_1 \approx -v_1^3 \gtrsim \overline R^{\sfrac32}.
		\ee
		Hence,
		\be
			\tilde \phi(x,v)
				\approx \sqrt{-v_1}.
		\ee
		We also notice that $\psi$ is the only term in $\tilde \mu_R$ not equal to $1$ on this domain.  Hence,
		\be
			\begin{split}
				\left(\Delta_v + v\cdot\nabla_x\right) \tilde \mu_R
					&= \frac{9 v^4}{x^2} \psi'' - \frac{6v}{x} \psi'
						+ \frac{v^4}{x^2} \psi'
					\\&
					\lesssim \frac{1}{v_1^2}
					\lesssim \frac{\sqrt{-v_1}}{ \overline R^{\sfrac54}}
					\approx \frac{\tilde\phi(x,v)}{\overline R^{\sfrac54}}.
			\end{split}
		\ee
		Thus, we have established~\eqref{e.c061805}.

\medskip		
\noindent{\bf \# Case eight:}
		\be
			-\frac{v_1 \overline R}{x_1}
				< 1,
			\quad
			-\frac{v_1^3}{x_1}
				\in (\sfrac12, 1),
			\quad\text{ and } \quad
			\frac{2v_1}{\sqrt{\overline R}} - 1
				\in (\sfrac12, 1).
		\ee
		In this case, we have
		\be
			x_1 \approx -v_1^3 \approx \overline R^{\sfrac32}.
		\ee
		From this point, the proof is essentially the same as in the previous case, and, thus, is skipped.  This completes the proof of \Cref{l.isolated_estimate}.
\end{proof}

\section{Other technical lemmas}\label{s.technical}

\subsection{Understanding \texorpdfstring{$(x,v)$}{(x,v)} in \texorpdfstring{$\cP_R$}{P}}

\begin{proof}[Proof of \texorpdfstring{\Cref{l.cP_R_bounds}}{Lemma \ref{l.cP_R_bounds}}]
	Let $z = (0,x,v)$ for ease.  It is easy to see that the infimum in~\eqref{e.dist2} is attained, up to including the boundary $(\{0\}\times \R^{d-1})^2$, so we fix $\zeta = (\tau,\xi,\omega)$ such that
	\be\label{e.c041201}
		\begin{split}
			&z \circ \zeta \in \R \times \overline\gamma_-
				\quad\tand
			\\&
			\VERT \zeta\VERT
				= \dist(\R \times \gamma_-,z)
				\leq \sqrt R.
		\end{split}
	\ee
	Since $z \circ \zeta = (\tau, x + \xi + \tau v, v + \omega)$, then we have the constraints
	\be\label{e.c041202}
		0= x_1 + \xi_1 + \tau v_1
		\quad\tand\quad
		v_1 + \omega_1 >0.
	\ee
	It is clear from the second inequality in~\eqref{e.c041201}, as well as the definition~\eqref{e.norm} of $\VERT\cdot\VERT$, that
	\be
		|\omega_1|\leq 2\sqrt R,
	\ee
	This can be seen by noting that
	\be\label{e.c041204}
		\sqrt R
			\geq \min_w \max\{|\omega-w|, |w|\}
			\geq \frac{|\omega|}{2}.
	\ee
	Hence, the second line of~\eqref{e.c041202} yields
	\be
		- v_1 < \omega_1 \leq 2\sqrt R,
	\ee
	as desired.

	Next, notice that the definition~\eqref{e.norm} of $\VERT \cdot\VERT$ and the second line of~\eqref{e.c041201} implies that $|\tau| \leq R$.  Hence, if
	\be\label{e.c041203}
		|\xi_1|\leq 3 R^{\sfrac32}
	\ee
	then we immediately have
	\be
		x_1
			= - \xi_1 - \tau v_1
			\lesssim \max\{ 3R^{\sfrac32}, R v_1\},
	\ee
	as desired.
	
	To see~\eqref{e.c041203}, let $w$ be the minimizer in the definition~\eqref{e.norm} of $\VERT \cdot\VERT$.  Then, by arguing exactly as in~\eqref{e.c041204}, we find $|w| \leq 2\sqrt R$.  We deduce that
	\be
		R^{\sfrac32}
			\geq |\xi - \tau w|
			\geq |\xi| - 2R^{\sfrac32},
	\ee
	from which~\eqref{e.c041203} follows.  This concludes the proof.
\end{proof}

\subsection{Understanding \texorpdfstring{$(x,v)$}{(x,v)} in \texorpdfstring{$\cO_R$}{O}}

\begin{proof}[Proof of \Cref{l.cO_R_bounds}]
By the symmetry of $\gamma_-$ and $\gamma_+$, we immediately see that
\be\label{e.c061802}
	x_1 \leq \frac{R}{10} \max\{ -v_1, 3 \sqrt{\sfrac{R}{10}}\}
\ee
since $\dist(\R\times\gamma_-, (0,x,v)) \leq \sqrt{\sfrac{R}{10}}$; see  \Cref{l.cP_R_bounds}.  This is useful in the sequel.

If $-v_1 \geq \sqrt{\sfrac{R}{2}}$, then we find
\be
	\frac{x_1}{|v_1|}
		\leq \begin{cases}
				\frac{3R}{10^{\sfrac{3}{2}}\sqrt{\sfrac{1}{2}}}
					\qquad &\text{ if } -v_1 \leq 3 \sqrt{\sfrac{R}{10}},\\
				\frac{R}{10}
					\qquad &\text{ if } -v_1 \geq 3 \sqrt{\sfrac{R}{10}}.
			\end{cases}
\ee
Hence, 
\be
	\frac{x_1}{|v_1|}
		\leq \frac{R}{5},
\ee
and the proof is finished in this case.

Next consider when
\be\label{e.c061801}
	|v_1| = -v_1 < \sqrt{\sfrac{R}{2}}.
\ee
In view of~\eqref{e.c061802}, this yields
\be\label{e.c061803}
	x_1
		\leq \frac{3R^{\sfrac32}}{10^{\sfrac32}}
		< \frac{R^{\sfrac32}}{2^{\sfrac32}}.
\ee
By definition, we find
\be
	\dist( \R \times \gamma_-, (0,x,v)) \geq \sqrt{\sfrac{R}{2}}.
\ee
Letting
\be
	\zeta
		= \left(\tfrac{R}{4},
				\left(- x_1 - \tfrac{R}{4}v_1,0\),
				\(-v_1,0\)
			\right)
\ee
we have that $z \circ \zeta \in \overline {\R \times \gamma_-}$ and, hence,
\be
	\VERT \zeta \VERT \geq \sqrt{\sfrac{R}{2}}.
\ee
Taking $w = 0$ in the definition~\eqref{e.norm} of $\VERT\cdot\VERT$, we find
\be
	\max\left\{
		|\tfrac{R}{4}|^{\sfrac12},
		\left|x_1 + \tfrac{Rv_1}{4}\right|^{\sfrac13},
		|0|,
		|v_1-0|
	\right\}
		\geq \sqrt{\sfrac{R}{2}}.
\ee
By~\eqref{e.c061801}, it follows that
\be
	\Big(\frac{R}{2}\Big)^{\sfrac32}
		\leq \left|x_1 + \frac{Rv_1}{4}\right|.
\ee
If the term in the absolute value is negative, we find
\be
	x_1 < - \frac{Rv_1}{4}
		= \frac{R|v_1|}{4}
\ee
from which the conclusion follows.  If the term in absolute value is nonnegative, we find
\be
	\Big(\frac{R}{2}\Big)^{\sfrac32} + \frac{R|v_1|}{4}
		\leq x_1.
\ee
This contradicts~\eqref{e.c061803}.  The proof is concluded.
\end{proof}

\subsection{Understanding \texorpdfstring{$(x,v)$}{(x,v)} in \texorpdfstring{$\cN_R$}{N}}

\begin{proof}[Proof of \Cref{l.cN_R}]
	Fix $\eps>0$ to be chosen.  Let us first consider the case where $|v_1| \geq \eps \sqrt R$.  
	Applying \Cref{l.transport}, we see that
	\be\label{e.c061909}
		x_1 \geq \frac{R |v_1|}{10}
			\geq \frac{\eps R^{\sfrac32}}{10}.
	\ee
	If $x_1 \geq |v_1|^3$ then, by~\eqref{e.phi_asymp},
	\be
		\tilde \phi(x,v)
			\approx x_1^{\sfrac16}
			\gtrsim R^{\sfrac14}.
	\ee
	We used~\eqref{e.c061909} in the last inequality. 
	
	If $x_1 < |v_1|^3$, then, applying the assumption $v_1 >0$ and, by~\eqref{e.phi_asymp},
	\be
		\tilde \phi(x,v)
			\approx \sqrt{ v_1}
			\gtrsim R^{\sfrac14}. 
	\ee
	This finishes the proof in this case.
	
	Now we consider the case $|v_1| \leq \eps\sqrt R$.  Recall from~\eqref{e.regions}, that
	\be
		\dist( \R_+ \times \partial \H \times \R^d, (0,x,v))
			\geq \sqrt{\tfrac{R}{10}}.
	\ee
	Let
	\be
		\zeta = (\tau, \xi, 0)
			= (\sfrac{R}{20}, (- x_1 - \tau v_1,0), 0),
	\ee
	and notice that $0 = x_1 + \xi_1 + \tau v_1$.  Hence,
	\be
		\sqrt{\tfrac{R}{10}}
			\leq \dist( \R_+ \times \partial \H \times \R^d, (0,x,v))
			\leq \VERT \zeta\VERT.
	\ee
	Taking $w=0$ in the definition~\eqref{e.norm} of $\VERT\cdot\VERT$, we find
	\be
		\sqrt{\tfrac{R}{10}}
			\leq \max \left\{ |\sfrac{R}{20}|^{\sfrac12}, |x_1 + \tau v_1|^{\sfrac13}, |0|,|0| \right\}.
	\ee
	It follows that
	\be
		\sqrt{\tfrac{R}{10}}
			\leq |x_1 + \tau v_1|^{\sfrac13}.
	\ee
	Rearranging this, we find
	\be
		\frac{R^3}{10^3} - \frac{R}{20} |v_1|
			\leq x_1.
	\ee
	Using that $|v_1| \leq \eps \sqrt R$ and possibly decreasing $\eps$, we deduce that
	\be
		\frac{R^3}{10^4}
			\leq x_1.
	\ee
	Further decreasing $\eps$, if necessary, we see that $|v_1|^2 \leq x_1$, whence
	\be
		\tilde \phi(x,v) \approx x_1^{\sfrac16}
			\gtrsim R^{\sfrac14}.
	\ee
	Here, we once again used the asymptotics of $\tilde \phi$ given in~\eqref{e.phi_asymp}. 
	This concludes the proof.	
\end{proof}

\section{The whole space case: \texorpdfstring{\Cref{c.ws_time_decay}}{Corollary \ref{c.ws_time_decay}}}\label{s.ws_time_decay}

The proof of \Cref{t.ws_Nash} follows exactly the outline of the proof of \Cref{p.Nash}.  The only modification to be made is to take the cutoff function $\psi$ in~\eqref{e.c050302}-\eqref{e.c050301} to be supported on $B$.  As such, we omit the proof.

In this section, we provide a brief outline of the proof of the whole-space time decay.  The work here is similar to, but much simpler than, the proof of \Cref{t.main}.
\begin{proof}[Proof of \Cref{c.ws_time_decay}]
	Let us note that
	\be
		\frac{d}{dt} \int f \dx\dv
			= \int (\nabla\cdot(a\nabla_v f) - v\cdot\nabla_x f) \dx\dv
			= 0.
	\ee
	Hence, the $L^1$-norm is conserved:
	\be\label{e.c062004}
		\|f(t,\cdot,\cdot)\|_{L^1(\R^{2d})} = \int f_\init \dx\dv.
	\ee

	The main step is obtaining an $L^1 \to L^2$ bound on the solution operator $S_t f_\init = f(t)$.  We do this by combining the Nash inequality with the energy equality.
		
	Let us begin with the energy equality.  Multiplying~\eqref{e.ws_kfp} by $f$, integrating, and then integrating by parts, we obtain, for any $0 \leq t'  \leq t$,
	\be\label{e.energy2}
		E(t) + \int_{t'}^t D(s) \ds
			\lesssim E(t) + \int_{t'}^t \int \nabla_v f a \nabla_v f \dz
			\leq E(t'),
	\ee
	where we borrow the notation for $E$ and $D$ from the proof of~\Cref{t.main} (see~\eqref{e.definitionED}).  Let us point out that, as a result of~\eqref{e.energy2}, $E$ is decreasing in time.   
	
	As in the proof of \Cref{t.main}, we note that
	\be\label{e.c062001}
		\int_{t'}^t D(s) \ds \approx \lbr f\rbr_{H^1_\kin([t',t]\times \R^{2d})}^2
	\ee
	(recall~\eqref{e.H1_kin_f}).
	
	Now we introduce the Nash inequality to control the quantities in~\eqref{e.energy2}.  %{e.c062001}.  
	Indeed, applying \Cref{t.ws_Nash} 	with the choices $s_0 = \sqrt{\sfrac{t}{4}}$, $s = \sqrt{\eps t}$ for $\eps\in (0,\sfrac14)$ to be chosen,
	\be
		\Omega_1 = [\sfrac{t}{2},t] \times \R^{2d},
		\quad
		\Omega_2 = [\sfrac{t}{4},t] \times \R^{2d},
		\quad\tand\quad
		B = \{z \in (0,1]\times \R^{2d} : \dist(z,0) \leq 1\},
	\ee
	we deduce that
	\be\label{e.c062005}
		\int_{\sfrac{t}{2}}^t E(s) \ds
			\lesssim \frac{\eps t}{\delta} \int_{\sfrac{t}{4}}^t D(s) \ds
				+ \delta \int_{\sfrac{t}{4}}^t E(s) \ds
				+ \frac{t^2}{(\eps t)^{2d+1}} \Big( \int f_\init \dx\dv\Big)^2,
	\ee
	where $\delta>0$ is a parameter to be chosen and where we applied~\eqref{e.c062004}.  
%	Inserting this into~\eqref{e.energy2} yields	
%	\be
%		E(\sfrac{t}{4}) - E(t)
%			\gtrsim \frac{\delta}{\eps t} \Big[ \Big( \int_{\sfrac{t}{2}}^t E(s) \ds
%				- \delta  \int_{\sfrac{t}{4}}^t E(s) \ds
%				- \frac{t^2}{(\eps t)^{2d+1}} \Big( \int f_\init \dx\dv\Big)^2\Big].
%	\ee

Fix $\overline \alpha>0$ to be chosen and let
\be
	\alpha = \overline \alpha \Big( \int f_\init \dx\dv\Big)^2.
\ee
Define
\be
	t_0 = \sup \Big\{ t : E(s) \leq \frac{\alpha}{s^{2d}}
					\quad \text{ for all } s\in (0,t]\Big\}.
\ee
Up to approximation, we may assume that $f_\init$ is smooth and compactly supported, whence $t_0 > 0$.  Our goal is to show that $t_0 = \infty$.  Hence, we argue by contradiction assuming that $t_0$ is finite.

Clearly $E(t_0) = \alpha t_0^{-2d}$ and $E(s) \leq \alpha s^{-2d}$ for all $s\leq t_0$.  Using this in~\eqref{e.c062005} and recalling that $E$ is decreasing in time yields
\be\label{e.c0603006}
	\frac{\alpha}{t_0^{2d-1}}
			= t_0 E(t_0)
			\lesssim \frac{\eps t_0}{\delta} \int_{\sfrac{t_0}{4}}^{t_0} D(s) \ds
				+ \frac{\delta \alpha}{t_0^{2d-1}}
				+ \frac{\alpha}{ \bar\alpha \eps^{2d+1} (\eps t_0)^{2d-1}}.
\ee
Using~\eqref{e.energy2} and the definition of $t_0$, we have
\be
	\int_{\sfrac{t_0}{4}}^{t_0} D(s) \ds
		\leq E(\sfrac{t_0}{4})
		\leq \frac{4^{2d}\alpha}{t_0^{2d}}.
\ee
Including this in~\eqref{e.c0603006}, we find
\be
	\frac{\alpha}{t_0^{2d-1}}
			\lesssim \frac{\eps }{\delta} \frac{\alpha}{t_0^{2d-1}}
				+ \frac{\delta \alpha}{t_0^{2d-1}}
				+ \frac{\alpha}{ \bar\alpha \eps^{2d+1} (\eps t_0)^{2d-1}}.
\ee
This is clearly a contradiction after choosing, in order, $\delta$ and $\epsilon$ small and $\overline \alpha$ large.  It follows that $t_0 = \infty$.

The rest of the proof is simple functional analysis. 
From the fact that $t_0=\infty$, we deduce that
\be
	\int_{\R^{2d}} f(t,x,v)^2 \dx\dv
		\lesssim \frac{1}{t^{2d}} \int f_\init \dx\dv.
\ee
This can be rephrased as
\be
	\|S_t\|_{L^1 \to L^2} \lesssim \frac{1}{t^d}.
\ee
Of course, the same inequality follows for the solution operator $\tilde S_t$ adjoint equation
\be
	(\partial_t - v\cdot\nabla_x) f = \nabla_v\cdot\left( a \nabla_v f\right)
%	(\partial_t - v\cdot\nabla_x) \tilde f = \Delta_v \tilde f
		\qquad\text{ in } \R_+ \times \R^{2d}.
\ee
Hence, the operator $\tilde S^*_t: L^2 \to L^\infty$, which is again a solution operator of~\eqref{e.ws_kfp}, satisfies
\be
	\|\tilde S^*_t\|_{L^2 \to L^\infty} \lesssim \frac{1}{t^d}.
\ee
Writing $f(t) = \tilde S^*_{\sfrac{t}{2}}\tilde S^*_{\sfrac{t}{2}}f_\init$, we deduce
\be
	\|f(t)\|_{L^\infty(\R^{2d})}
		= \|\tilde S^*_{\sfrac{t}{2}} S_{\sfrac{t}{2}}f_\init\|_{L^\infty(\R^{2d})}
		\lesssim \frac{1}{t^d} \|S_{\sfrac{t}{2}}f_\init\|_{L^2(\R^{2d})}
		\lesssim \frac{1}{t^d}\frac{1}{t^d} \|f_\init\|_{L^1(\R^{2d})},
\ee
which concludes the proof.
\end{proof}

\appendix

\section{A kinetic version of Young's convolution inequality}

Let us note that the main inequality in the following lemma is well-known.  Indeed, it is known as Young's convolution inequality for integrals with respect to the bi-invariant Haar measure associated to a locally compact group.  In our case, the group is $(\R^{2d+1},\circ)$.  We include it here for completeness and, importantly, because the change in domain of integration is crucial to our results above.

\begin{Lemma}\label{l.Youngs}
	Fix any measurable sets $A, B \subset \R^{2d+1}$.  Let $g$ and $\psi$ be measurable functions, and let the indices $r,p,q\in [1,\infty]$ satisfy
	%For any measurable $g$ and $\psi$ and any indices $r,p,q\in [1,\infty]$ satisfying
	\be
		\frac{1}{r} + 1 = \frac{1}{p} + \frac{1}{q}.
	\ee
	If $\supp \psi \subset B$, then 
	\be
	 	\|g*\psi\|_{L^r(A)}
			\leq \|g\|_{L^p(A\circ B^{-1})} \|\psi\|_{L^q(B)}.
	\ee
	We recall the definition~\eqref{e.circ_sets} of $A\circ B^{-1}$ and the definition~\eqref{e.convolution} of the kinetic convolution.
\end{Lemma}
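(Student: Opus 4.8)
The plan is to run the classical proof of Young's convolution inequality on the Lie group $(\R^{2d+1},\circ)$, carrying the domains of integration along at every step. The only structural inputs are that Lebesgue measure $\dz$ is invariant under left and right translation by $\circ$ and under the inversion $z\mapsto z^{-1}$. Both are immediate Jacobian computations: for fixed $z_0$, each of the maps $z\mapsto z_0\circ z$, $z\mapsto z\circ z_0$ and $z\mapsto z^{-1}$ is affine in $(t,x,v)$ with a unipotent linear part (the $x$-block acquires a term linear in $t$ or in $v$, while the $t$- and $v$-blocks are translations/reflections), so the absolute value of its Jacobian determinant is $1$; equivalently, $(\R^{2d+1},\circ)$ is a nilpotent, hence unimodular, group whose Haar measure is $\dz$. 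After reducing to $g,\psi\ge 0$ and treating separately the cases where one of $p,q,r$ equals $\infty$ (these collapse directly to Hölder's inequality), we may assume all exponents are finite.

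The key bookkeeping observation is that the integrand $g(\tilde z)\,\psi(\tilde z^{-1}\circ z)$ is supported, in $\tilde z$, where $\tilde z^{-1}\circ z\in\supp\psi\subset B$, i.e.\ where $\tilde z\in z\circ B^{-1}$; hence for $z\in A$ the $\tilde z$-integration is confined to $z\circ B^{-1}\subset A\circ B^{-1}$. I would then split
\be
	g(\tilde z)\,\psi(\tilde z^{-1}\circ z)
		= \big(g(\tilde z)^p\psi(\tilde z^{-1}\circ z)^q\big)^{1/r}\,
			\big(g(\tilde z)^p\big)^{\frac1p-\frac1r}\,
			\big(\psi(\tilde z^{-1}\circ z)^q\big)^{\frac1q-\frac1r},
\ee
which is legitimate because the exponents of $g$ (resp.\ of $\psi$) sum to $1$ and $\tfrac1p-\tfrac1r,\tfrac1q-\tfrac1r\ge 0$ since $r\ge p,q$. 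Applying Hölder's inequality in $\tilde z$ with the three exponents $r$, $(\tfrac1p-\tfrac1r)^{-1}$, $(\tfrac1q-\tfrac1r)^{-1}$, whose reciprocals sum to $\tfrac1p+\tfrac1q-\tfrac1r=1$ by hypothesis, and bounding the second factor by $\|g\|_{L^p(A\circ B^{-1})}^{1-p/r}$ (via the support observation) and the third by $\|\psi\|_{L^q(B)}^{1-q/r}$ (substituting $w=\tilde z^{-1}\circ z$, using inversion- and translation-invariance of $\dz$ together with $\supp\psi\subset B$), one obtains, for a.e.\ $z\in A$,
\be
	(g*\psi)(z)
		\le \Big(\int g(\tilde z)^p\psi(\tilde z^{-1}\circ z)^q\dtz\Big)^{1/r}
			\|g\|_{L^p(A\circ B^{-1})}^{1-p/r}\,\|\psi\|_{L^q(B)}^{1-q/r}.
\ee

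To finish, raise this to the $r$-th power, integrate over $z\in A$, and swap the order of integration in the leftover double integral by Tonelli. The inner integral $\int_A\psi(\tilde z^{-1}\circ z)^q\dz$ is $\le\|\psi\|_{L^q(B)}^q$ by left-invariance of $\dz$, while the factor $g(\tilde z)^p$ contributes only for $\tilde z\in A\circ B^{-1}$, so the double integral is $\le\|g\|_{L^p(A\circ B^{-1})}^p\|\psi\|_{L^q(B)}^q$. Collecting exponents — $(r-p)+p=r$ for $g$ and $(r-q)+q=r$ for $\psi$ — gives $\int_A(g*\psi)^r\dz\le\|g\|_{L^p(A\circ B^{-1})}^r\|\psi\|_{L^q(B)}^r$, which is the claim after taking $r$-th roots. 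I do not expect a genuine analytic obstacle; the one point requiring care is to check that every change of variables and every use of Tonelli leaves the relevant integration domain inside $A\circ B^{-1}$ (for $g$) and inside $B$ (for $\psi$) — this is precisely the content that distinguishes the statement from the textbook inequality, and it is exactly what is needed when invoking the lemma in the proof of \Cref{p.Nash} and \Cref{t.ws_Nash}.
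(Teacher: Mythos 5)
Your proposal is correct and follows essentially the same route as the paper's proof: the same three-factor splitting of $g(\tilde z)\psi(\tilde z^{-1}\circ z)$ into $\big(g^p\psi^q\big)^{1/r}\cdot g^{(r-p)/r}\cdot\psi^{(r-q)/r}$, the same generalized Hölder step in $\tilde z$ with exponents $r$, $\tfrac{rp}{r-p}$, $\tfrac{rq}{r-q}$, and the same integration over $A$ followed by Tonelli with careful tracking of the domains $A\circ B^{-1}$ and $B$. The only differences are cosmetic: you make explicit the unimodularity of $(\R^{2d+1},\circ)$, the reduction to nonnegative functions, and the edge cases where an exponent is $\infty$, all of which the paper leaves implicit.
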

\begin{proof}
	Fix any $z\in A$.  Let us note that 
	\be
		\frac1r + \frac{r-p}{rp} + \frac{r-q}{rq}
			= \frac{1}{p} + \frac{1}{q} - \frac{1}{r}
			= 1.
	\ee
	Hence, applying the generalized H\"older inequality to the suitably re-written convolution, we have 
	\be
	\begin{split}
		|(g*\psi)(z)|
		&\leq 
		\int_B |g(z\circ \tilde z^{-1})||\psi(\tilde z)|\,d\tilde z
		\\&=
		\int_B |g(z\circ \tilde z^{-1})|^{1+\frac{p}{r}-\frac{p}{r}}|\psi(\tilde z)|^{1+\frac{q}{r}-\frac{q}{r}}\,d\tilde z
		\\&=
		\int_B |g(z\circ \tilde z^{-1})|^{\frac{p}{r}}|\psi(\tilde z)|^{\frac{q}{r}}
		|g(z\circ \tilde z^{-1})|^{1-\frac{p}{r}}|\psi(\tilde z)|^{1-\frac{q}{r}}\,d\tilde z
		\\&=
		\int_B \left(|g(z\circ \tilde z^{-1})|^{p}|\psi(\tilde z)|^{q}\right)^{\frac{1}{r}}
		|g(z\circ \tilde z^{-1})|^{\frac{r-p}{r}}|\psi(\tilde z)|^{\frac{r-q}{r}}\,d\tilde z
		\\&\leq
		\left\| \left(|g(z\circ \cdot^{-1})|^{p}|\psi|^{q}\right)^{\frac{1}{r}}\right\|_{L^r(B)}
		\left\| |g(z\circ \cdot^{-1})|^{\frac{r-p}{r}}\right\|_{L^{\frac{pr}{r-p}}(B)}
		\left\| |\psi|^{\frac{r-q}{r}}\right\|_{L^{\frac{qr}{r-q}}(B)}.
	\end{split}
	\ee
	We simplify two of the norms above.  First, we clearly have
	\be
		\left\| |\psi|^{\frac{r-q}{r}}\right\|_{L^{\frac{qr}{r-q}}(B)}
			=
			\left\| \psi\right\|_{L^q(B)}^{\frac{r-q}{r}}.
	\ee
	Additionally, by using that $z\circ B^{-1} \subset A\circ B^{-1}$, we see that
	\be \label{e.c050801}
	\begin{split}
%		&
%		\left\| \left(|g(z\circ \cdot^{-1})|^{p}|\psi|^{q}\right)^{\frac{1}{r}}\right\|_{L^r(B)}
%		=
%		\left(\int_B |g(z\circ \cdot^{-1})|^{p}|\psi|^{q}\,d\tilde z\right)^{1/r},
%	\\
	&
		\left\| |g(z\circ \cdot^{-1})|^{\frac{r-p}{r}}\right\|_{L^{\frac{pr}{r-p}}(B)}
		=
		\left\| g(z\circ \cdot^{-1})\right\|_{L^{p}(B)}^{\frac{r-p}{r}}
		=
		\left\| g(z\circ \cdot)\right\|_{L^{p}(B^{-1})}^{\frac{r-p}{r}}
		\leq
		\|g\|_{L^p(A\circ B^{-1})}^{\frac{r-p}{r}}.
	\end{split}
	\ee	
	Here, we used that the Jacobian associated to $\tilde z \mapsto \tilde z^{-1}$ is one.
	
	In summary, we have arrived at, for any fixed $z\in A$,
	\be
		|(g*\psi)(z)|
		\leq 
		\left(\int_B |g(z\circ \tilde z^{-1})|^{p}|\psi(\tilde z)|^{q}\,d\tilde z\right)^{1/r}
		\|g\|_{L^p(A\circ B^{-1})}^{\frac{r-p}{r}}
		\left\| \psi\right\|_{L^q(B)}^{\frac{r-q}{r}}.
	\ee
	We now integrate over all $z \in A$, use the Fubini-Tonelli theorem, and enlarge the domains as we did in~\eqref{e.c050801} to find:
	\be
		\begin{split}
		\|g*\psi\|_{L^r(A)}^r
%		&=
%		\int_A |(g*\psi)(z)|^r\,dz
%		\\
		&\leq
		\left\| g\right\|_{L^p(A\circ B^{-1})}^{r-p}
		\left\| \psi\right\|_{L^q (B)}^{r-q}
		\int_A \left(\int_B |g(z\circ \tilde z^{-1})|^{p}|\psi(\tilde z)|^{q}\,d\tilde z\right)
		\dz
		\\&\leq
		\left\| g\right\|_{L^p(A\circ B^{-1})}^{r-p}
		\left\| \psi\right\|_{L^q (B)}^{r-q}
		\int_B
		|\psi(\tilde z)|^{q} \left(\int_A |g(z\circ \tilde z^{-1})|^{p}\dz \right)\dtz
		\\&\leq
		\left\| g\right\|_{L^p(A\circ B^{-1})}^{r-p}
		\left\| \psi\right\|_{L^q (B)}^{r-q}
		\int_B
		|\psi(\tilde z)|^{q} \left(\int_{A\circ B^{-1}} |g(\zeta)|^{p}\,d\zeta \right)\dtz
		\\&=
		\left\| g\right\|_{L^p(A\circ B^{-1})}^r
		\left\| \psi\right\|_{L^q (B)}^r.
		\end{split}
	\ee
	The proof is complete after taking each side to the $\sfrac1r$ power.
\end{proof}

%
%
%
%
%
%
%
%
%\begin{thebibliography}{9}
%%
%%\bibitem{PP}
%%  A. Pascucci, A. Pesce,
%%  \textit{Sobolev embeddings for kinetic Fokker-Planck equations},
%%Preprint, arXiv:2209.05124, (2022).
%%
%%\bibitem{tartar}
%%Tartar, Luc,
%%     \textit{An introduction to {S}obolev spaces and interpolation spaces},
%%     Lecture Notes of the Unione Matematica Italiana,
%%     3,
%%     Springer, Berlin; UMI, Bologna,
%%     2007,
%%     xxvi+218
%%     Joan L. Cerd\`a.
%%
%%
%\bibitem{integrated}
%  Groeneboom, Piet and Jongbloed, Geurt and Wellner, Jon A.,
%  \textit{Integrated {B}rownian motion, conditioned to be positive},
%Ann. Probab., arXiv:2209.05124, (1999).
%
%\bibitem{special}
% Olver, Frank W. J.,
%  \textit{Asymptotics and special functions},
%AKP Classics, Reprint of the 1974 original [Academic Press, New York (1997).
%
%
%%@book {MR1429619,
%%    AUTHOR = {},
%%     TITLE = {Asymptotics and special functions},
%%    SERIES = {AKP Classics},
%%      NOTE = {Reprint of the 1974 original [Academic Press, New York;
%%              MR0435697 (55 \#8655)]},
%% PUBLISHER = {A K Peters, Ltd., Wellesley, MA},
%%      YEAR = {1997},
%%     PAGES = {xviii+572},
%%      ISBN = {1-56881-069-5},
%%   MRCLASS = {41-02 (33Cxx 41A60 65D20)},
%%  MRNUMBER = {1429619},
%%}OLVER, F. W. J. Ž1974.. . Academic Press, New York.
%\end{thebibliography}
\bibliographystyle{abbrv}
\bibliography{kinetic_nash}

\end{document}